\newcommand{\x}{\mathbf{x}}
\newcommand{\ga}{\mathbf{a}}
\newcommand{\gb}{\mathbf{b}}
\newcommand{\gc}{\mathbf{c}}
\newcommand{\y}{\mathbf{y}}
\newcommand{\z}{\mathbf{z}}
\newcommand{\g}{\mathbf{g}}
\newcommand{\h}{\mathbf{h}}
\newcommand{\e}{\mathbf{e}}
\newcommand{\tr}{\mathbf{tr}_G}
\newcommand{\B}{\mathscr{B}}
\newcommand{\G}{\mathscr{G}}
\newcommand{\N}{\mathscr{N}}
\newcommand{\Ss}{\mathcal{S}}
\newcommand{\I}{\mathscr{I}}
\newcommand{\Q}{\mathcal{Q}}
\newcommand{\V}{\mathscr{V}}
\newcommand{\Y}{\mathscr{Y}}
\newcommand{\X}{\mathscr{X}}
\newcommand{\pr}[2]{G_{[{#1},{#2})}}
\newcommand{\prh}[2]{H_{[{#1},{#2})}}
\newtheorem{thm}{Theorem}[section]
\newtheorem{cor}[thm]{Corollary}
\newtheorem{lem}[thm]{Lemma}
\newtheorem{question}[thm]{Question}
\newtheorem{rem}[thm]{Remark}
\newtheorem{prop}[thm]{Proposition}
\newtheorem{example}[thm]{Example}
\begin{document}

\title{\bf On the generic family of Cayley graphs\\ of a finite group\thanks{The research  was supported by the grant WZ/WI/1/2019 from Bia{\l}ystok University of Technology and funded from the resources for research by Ministry of Science and Higher Education of Poland.}}
\author{Czes{\l}aw Bagi\'{n}ski \qquad Piotr Grzeszczuk\\
\small Faculty of Computer Science\\[-0.8ex]
\small Bialystok University of Technology\\[-0.8ex]
\small Wiejska 45A Bialystok, 15-351 Poland\\
\small\tt \{c.baginski,p.grzeszczuk\}@pb.edu.pl}
\date{}

\maketitle
\begin{abstract}
 Let $G$ be a finite group. For each $m>1$ we define the symmetric canonical subset $S=S(m)$ of the Cartesian power $G^m$ and we consider the family of Cayley graphs $\mathscr{G}_m(G)=Cay(G^m,S)$. We describe properties of these graphs and show that for a fixed $m>1$ and groups $G$ and $H$ the graphs $\mathscr{G}_m(G)$ and $\mathscr{G}_m(H)$ are isomorphic if and only if the groups $G$ and $H$ are isomorphic. We describe also the groups of automorphisms $\mathbf{Aut}(\mathscr{G}_m(G))$. It is shown that if $G$ is a non-abelian group, then $\mathbf{Aut}(\mathscr{G}_m(G))\simeq \big(G^m \rtimes \mathbf{Aut}(G)\big)\rtimes D_{m+1}$, where $D_{m+1}$ is the dihedral group of order $2m+2$. If $G$ is an abelian group (with some exceptions for $m=3$), then $\mathbf{Aut}(\mathscr{G}_m(G))\simeq G^m\rtimes \big(\mathbf{Aut}(G)\times S_{m+1}\big)$, where $S_{m+1}$ is the symmetric group of degree $m+1$. As an example of application we discuss relations between  Cayley graphs $\mathscr{G}_m(G)$ and Bergman-Isaacs Theorem on rings with fixed-point-free group actions.
 \bigskip
 
 \noindent{\bf Mathematics Subject Classification:} 05C25, 05C60, 05E15, 16W22
\end{abstract}

%\noindent {\bf Keywords:} Cayley graph, Automorphism group, Group actions on rings

\bigskip

%%%%%%%%%%%%%%%%%%%%%%%%%%%%%%%%%%%%%%%%%%%%%%
\newcommand{\cftchapterdotsep}{\cftnodots}
\renewcommand{\cftchapterdotsep}{\cftdotsep}
{\footnotesize \tableofcontents}

%--------------------------------------------

\section{Introduction}

Let $G$ be a group with the identity element $e$. A subset $S$ of $G\setminus\{e\}$ is said to be symmetric if $S=S^{-1}$.
Recall that  the Cayley graph with respect to $S$, denoted by $Cay(G,S)$, is the graph whose vertex set is $G$ and
two vertices $g,h$ are adjacent (we denote this by $g\sim h$) if $h=s\cdot g$ for some $s\in S$. With any group $G$ we associate the family of graphs $\G_m(G)=Cay(G^m,\Ss)$  ($m=2,3,\dots $), where $\Ss=\Ss(m)$ is the symmetric subset canonically determined by $G$ and $m$.  There are many results concerning isomorphisms of Cayley graphs, in the literature. Mostly they concern the question when two Cayley graphs on a given group (depending on the set $S$) are isomorphic. In this paper we consider a different question related to the isomorphism problem. 
Namely, we show that any member of our family of graphs determines the group $G$, in the sense that for the given integer $m>1$ and groups $G$, $H$ if the graphs $\G_m(G)$ and $\G_m(H)$ are isomorphic, then $G$ and $H$ are isomorphic as well. It is interesting that our construction appeared as a result of investigation of some ring theory problems. Some structural invariants of the graphs $\G_m(G)$
appears as important invariants of rings -- for details see next section. We are convinced that investigation of the graphs $\G_m(G)$ is also of independent interest, as their combinatorial properties determine structure of the group $G$.   

The definition of the the graph $\G_m(G)$ is based on a description of the set $\Ss$.
For $x\in G^{\times}=G\setminus\{e\}$ and $1\leqslant k< l\leqslant m+1$, we denote by $\x_{[k,l)}$  the element
$$
(\underbrace{e,e,\dots,e}_{k-1 \  \  {\rm times }},\underbrace{x,x,\dots,x}_{l-k \  \ {\rm  times }},e,e,\dots,e)\  
$$
of $G^m$. By $\pr{k}{l}$ we denote the set of all elements $\x_{[k,l)}$, where $x\in G^{\times}$
and call it {\em  an interval}. The symmetric set $\Ss$ is the union of all intervals:
$$
\Ss=\bigcup_{1\leqslant k<l\leqslant m+1}\pr{k}{l}.
$$
Thus if $\g=(g_1,\dots,g_m)$, $\h=(h_1,\dots,h_m)$ are two vertices of $\G_m(G)$, then
$$
\g \sim \h   \  \text{ iff }\    \h=\x_{[k,l)}\cdot\g \  \text{ for some }\  x\in G^\times \      
\text{ and }\ 1\leqslant k<l\leqslant m+1.
$$
It is easy to see that the graph $\G_m(G)$ has $|G|^m$ vertices and obviously is $d$-regular,  where $d=\binom{m+1}{2}(|G|-1)$.
\medskip

We begin by providing an overview of the main results in this paper. In Section 2 we present ring theoretical motivations for considering graphs $\G_m(G)$. In Sections 3 and 4 we collect elementary properties of the family $\{\G_m(G)\mid m\geqslant 2\}$. In particular, we prove  that for any group $G$ the graph $\G_2(G)$ is strongly regular and for $m>2$ the graphs $\G_m(G)$ are edge regular.
Recall that a maximum clique of a graph is a clique, such that there is no clique with more vertices. The clique number of the graph $\G_m(G)$, that is the number of vertices in a maximum clique (Proposition 4.7), is equal 
	$$\left\{\begin{array}{cl}
	{\rm max}\{m+1, |G|\} & {\rm if}\ \ (m,|G|)\neq (2,2)\\
	4 & {\rm if}\ \ (m,|G|)= (2,2).\\
	\end{array}\right.
	$$

In  Section 4 we distinguish two types of cliques of graphs $\G_m(G)$: {\em interval cliques}, whose vertices belong to the one interval and {\em dispersed cliques}, whose vertices belong to different intervals. We prove (Corollary 4.9) that any automorphism of $\G_m(G)$ preserves the type of a maximum clique, with one exception - the graph $\G_2(C_3)$. This fact will play a key role in determining the group of automorphisms $\mathbf{Aut}(\G_m(G))$.
\medskip

In the next preparatory Section 5, we introduce the concept of a homogeneous homomorphism between graphs
$\G_m(G)$ and $\G_m(H)$, as a graph homomorphism preserving intervals. The main result states

\medskip

\noindent{\bf Theorem 5.2.} {\em Let $G$, $H$ be groups and $m>1$. Then every homogeneous graph homomorphism (isomorphism) $F\colon \G_m(G)\to \G_m(H)$ is induced by a group monomorphism (isomorphism), that is
 $$
 F(g_1,g_2,\dots,g_m)=(f(g_1),f(g_2),\dots,f(g_m))
 $$
 for some monomorphism (isomorphism) of groups $f\colon G\to H$.}
 
 \medskip

In the last main Section 6, we describe the groups of automorphisms of the graphs $\G_m(G)$. The final description depends only on whether the group $G$ is abelian or non-abelian. We prove 

\medskip

\noindent{\bf Theorems 6.3 and 6.10.} {\em Let $G$ be a non-trivial group with the automorphism group $\mathbf{Aut}(G)$ and let $m>1.$
\begin{enumerate}
\item If $G$ is abelian and either
\begin{enumerate}
\item[{\rm (a)}] $m>3$, or
\item[{\rm (b)}] $m=3$ and $G$ is of exponent bigger than $2$, or
\item[{\rm (c)}] $m=2$ and $|G|\neq 3$,
\end{enumerate} 
then
$$
\mathbf{Aut}(\G_m(G))\simeq G^m\rtimes \big(\mathbf{Aut}(G)\times S_{m+1}\big),
$$
where $S_{m+1}$ is the symmetric group of degree $m+1$.
\item If $G$ is non-abelian, then
$$
\mathbf{Aut}(\G_m(G))\simeq \big(G^m \rtimes \mathbf{Aut}(G)\big)\rtimes D_{m+1},
$$
where $D_{m+1}$ is the dihedral group of order $2m+2$.
\end{enumerate}}

We then combine all of the partial results from Sections 4, 5 and 6 to
obtain the main result of our paper.

\medskip

\noindent{\bf Theorem 6.12.} {\em Let $G$ and $H$ be groups and $m>1$. Then the graphs $\G_m(G)$ and $\G_m(H)$ are isomorphic if and only if the groups $G$ and $H$ are isomorphic.}

\medskip

Throughout the paper we consider only finite groups. Our notation is mainly standard. We use exponential notation for automorphisms, that is $x^f$ means the image of $x$ under the action of an automorphism $f$. The exception is Section 4, where we use classical notation $f(x)$. We use, beside some special cases, the script font for denoting a graph and the standard font for denoting its vertex set, for instance, if $\mathscr{X}$ is a graph, then $X$ means its vertex set. 

%--------------------------------------------------------------------
%--------------------------------------------------------------------
%--------------------------------------------------------------------

\section{Motivation}

Let a finite group $G$ acts on a non-commutative  ring $R$ so that we have a group homomorphism $G\to \mathbf{Aut}(R)$, $r\mapsto r^g$. Then we can form the fixed subring
$$
R^G=\{r\in R \mid r^g=r \   \   {\rm  for\ all }\  \  g\in G\}.
$$
A natural way to construct fixed points of the action is to use the trace map $\tr\colon R\to R$ defined by $\tr(r)=\sum\limits_{g\in G}r^g$. The image $T=\tr(R)$ is an ideal of $R^G$.
One of the most fundamental results in the theory of fixed rings is the following  theorem of G.M. Bergman and I.M. Isaacs \cite{BI}.
\bigskip

\noindent {\bf Theorem.} {\em
Let $G$ be a finite group of automorphisms of the ring $R$ with no additive $|G|$-torsion. If $\tr(R)$ is nilpotent
of index $d$, then $R$ is nilpotent of index at most $f(|G|)^d$, where 
$f(m)=\prod\limits_{k=1}^{m}\left(\binom{m}{k}+1\right)$. 
In particular if $\tr(R)=0$, then $R^{f(|G|)}=0$.}

\bigskip

Bergman-Isaacs theorem is extremely useful and has been the basic tool in theory of finite group actions on non-commutative rings for a long time.   If the acting group $G$ is solvable it is known that the best possible nilpotence bound is $|G|$ (see \cite{BI},\cite{P1}). There were other proofs of Bergman-Isaacs theorem (cf. \cite{P}, \cite{Q}), but none of them  yield better information on the bound. 

It appears that working with the generic model defined by D.S. Passman in \cite{P} (cf. \cite{P1}, chapter 6) one can easily reduce the problem of nilpotence bound to description of properties of  the graph $\G_m(G)$ for sufficiently large $m$.

\medskip 

For a given finite group $G$ and some index set  $I$  let 
$$
Z_G=\mathbb{Z}\langle\zeta_{i,g}\mid g\in G, i\in I\rangle \  \  \ {\rm and }\  \  \    Q_G=\mathbb{Q}\langle\zeta_{i,g}\mid g\in G, i\in I\rangle
$$ 
be the free algebras without $1$ over the ring of integers $\mathbb{Z}$ and the field $\mathbb{Q}$, respectively. The group $G$ acts naturally on the left side on $Z_G$  and $Q_G$  permuting the variables according to the formula  $(\zeta_{i,g})^x=\zeta_{i,x^{-1}g}$.  The algebra $Z_G$ has a nice universal property
saying that  for any ring $A$ acted upon by $G$  and for given elements $a_i\in A$ ($i\in I$) the map
$$
\theta\colon \zeta_{i,g}\mapsto a_i^g
$$
extends to a $G$-homomorphism of rings $\theta\colon Z_G\to A$. Furthermore, for a sufficiently large set $I$ this map can be made a surjection and in this case $\theta(\tr(Z_G))=\tr(A)$.

For any positive integer $m$ let $Q_G(m)$ be the linear span over $\mathbb{Q}$ of all monomials in $\zeta_{i,g}$ of degree $m$. For monomials $a=\zeta_{i_1,g_1}\dots\zeta_{i_{k-1},g_{k-1}}$, $b=\zeta_{i_k,g_k}\dots\zeta_{i_{l-1},g_{l-1}}$,
$c=\zeta_{i_l,g_l}\dots\zeta_{i_{m},g_{m}}$ we have

\begin{equation}\label{T_G}
a\tr(b)c=\sum_{h\in G}\zeta_{i_1,g_1}\dots\zeta_{i_{k-1},g_{k-1}}\zeta_{i_k,hg_k}\dots\zeta_{i_{l-1},hg_{l-1}}\zeta_{i_l,g_l}\dots\zeta_{i_{m},g_{m}}.
\end{equation}

Let $\mathscr{T}$ be the set of all elements $a\tr(b)c$, where $a,b,c$ are monomials in $\zeta_{i,g}$ such that $\deg(b)\geqslant 1$ and $\deg(a)+\deg(b)+\deg(c)=m$, and let
$T_G(m)$ be the linear span of $\mathscr{T}$ over $\mathbb{Q}$. We call $T_G(m)$ a {\em trace subspace}. It is clear that $T_G(m)$ is a subspace of $Q_G(m)$. In the context of Bergman-Isaacs theorem we are asking the following

\bigskip
\begin{question}\label{Qn}
Is there a positive integer $m=m(G)$ such that $T_G(m)=Q_G(m)$?
\end{question}
\bigskip

To explain the connection of the question with the theorem consider a ring $R$ satisfying its assumptions. Let $\theta\colon Z_G\to R$ be an epimorphism such that $\theta(\tr(Z_G))=\tr(R)$.
Now, if we have $m$ giving positive answer to the question, we obtain that all
$\theta(\zeta_{i_1,g_1}\dots\zeta_{i_{m},g_{m}})$ belong to the ideal of $R$ generated by $\tr(R)$. Then a product of any $m$ elements of $R$ belongs to this ideal, so $m$ is a bound for the nilpotency index of $R$ modulo the ideal generated by $\tr(R)$, and in particular a bound for the nilpotency index of $R$ in the case $\tr(R)=0$. 

If $G=\{e,g\}$ is cyclic of order $2$, the question is easy and the answer is $m=2$. The identities like
$$
\begin{array}{rl}
2\zeta_{1, e}\zeta_{2,e }=
\zeta_{1,e}\tr(\zeta_{2,e})+\tr(\zeta_{1,e})\zeta_{2,e}-\tr(\zeta_{1,e}\zeta_{2,g}) 
\end{array}
$$
show that $T_G(2)=Q_G(2)$.

We will now reduce Question \ref{Qn} to some questions concernig properties of the graph $\G_m(G)$. 
For a fixed sequence $\mathbf{i}=(i_1,i_2,\dots, i_m)$  of elements of $I$ (not necessarily different) let 
$$
\Omega_\mathbf{i}=\{\zeta_{i_1,g_1}\zeta_{i_2,g_2}\dots \zeta_{i_m,g_m}\mid g_1,g_2,\dots,g_m\in G\}.
$$
It is clear that $|\Omega_\mathbf{i}|=|G|^m$. Notice that each monomial $\omega \in \Omega_\mathbf{i}$ determines in a natural way
$\binom{m+1}{2}$ elements of $T_G(m)$. Indeed, each partition 
$$
\{1,2,\dots,m\}=\{1,2,\dots,k-1\}\cup\{k,k+1,\dots,l-1\}\cup
\{l,l+1\dots,m\},
$$ 
where $1\leqslant k<l\leqslant m+1$ determines the element $a\tr(b)c$, where $a=\zeta_{i_1,g_1}\dots\zeta_{i_{k-1},g_{k-1}}$, $b=\zeta_{i_k,g_k}\dots\zeta_{i_{l-1},g_{l-1}}$, $c=\zeta_{i_l,g_l}\dots\zeta_{i_{m},g_{i_{m}}}$ and  $\omega=abc$. 

We will interpret the identity 
\begin{equation}\label{Eq}
\sum_{h\in G}\zeta_{i_1,g_1}\dots\zeta_{i_{k-1},g_{k-1}}\zeta_{i_k,hg_k}\dots\zeta_{i_{l-1},hg_{l-1}}\zeta_{i_l,g_l}\dots\zeta_{i_{m},g_{m}}=a\tr(b)c
\end{equation}
as a linear equation in the set of variables $\Omega_\mathbf{i}$. Our aim is to express any $\omega\in \Omega_\mathbf{i}$ using the elements of $T_G(m)$.

Since  $a\tr(b^x)c= a\tr(b)c$ for $x\in G$, the set $\Omega_\mathbf{i}$ determines the system of $\binom{m+1}{2}|G|^{m-1}$ linear equations in $|G|^m$ variables $\omega\in\Omega_\mathbf{i}$. Let $\mathbf{B}$ be the  matrix of this system (with respect to a fixed order of elements of $\Omega_\mathbf{i}$ and equations (\ref{Eq})). Clearly each entry of 
$\mathbf{B}$ is either $0$ or $1$. Furthermore, each its row has exactly $|G|$ entries equal to $1$, and in each of its column the number $1$ appears exactly $\binom{m+1}{2}$-times.
 Thus
\begin{equation}\label{Eq2}
\mathbf{B}^T\mathbf{B}=\binom{m+1}{2}\mathbf{I}+\mathbf{A},
\end{equation}
where $\mathbf{A}$ is a symmetric $|G|^m\times|G|^m$ matrix. 

We will see that $\mathbf{A}$ is the adjacency matrix of the Cayley graph $\G_m(G)$. 
It is clear that there is one  to one correspondence
$$
\zeta_{i_1,g_1}\zeta_{i_2,g_2}\dots \zeta_{i_m,g_m}\mapsto (g_1,g_2,\dots,g_m)
$$
between elements of $\Omega_\mathbf{i}$ and $G^m$. 
Let $\omega=\zeta_{i_1,g_1}\zeta_{i_2,g_2}\dots \zeta_{i_m,g_m}$ and $\omega^\prime=\zeta_{i_1,h_1}\zeta_{i_2,h_2}\dots \zeta_{i_m,h_m}$. 
Clearly in the matrix $\mathbf{B}^T\mathbf{B}$ the $(\omega,\omega^\prime)$-position   is equal to the product of  the $\omega$-th column by the $\omega^\prime$-th column of $\textbf{B}$. Notice that the $\omega$-th column of $\mathbf{B}$ has entries equal to $1$ precisely in the rows corresponding to equations (\ref{Eq}). On the other hand two different monomials $\omega$ and $\omega^\prime$
can appear simultaneously in at most one equation of the form (\ref{Eq}). Therefore the product of the $\omega$-th column and the $\omega^\prime$-th column is $1$ only in the case when $\omega$ and $\omega^\prime$ appear in the same equation, so when $\g\sim \h$ in the graph $\G_m(G)$. It means that $\mathbf{A}$ is the adjacency matrix of the graph $\G_m(G)$.

By the Cauchy-Binet formula $\det(\mathbf{B}^T\mathbf{B})=\sum\limits_i(\det \mathbf{B}_i)^2$, where the sum runs over all 
$|G|^m\times|G|^m$ submatrices of $\mathbf{B}$. Thus if $\det(\mathbf{B}^T\mathbf{B})\neq 0$,  $\det\mathbf{B}_i
\neq 0$ for some submatrix $\mathbf{B}_i$. In this case the system of equations (\ref{Eq}) has a unique solution, and therefore $T_G(m)=R_G(m)$. Let $\lambda_{\rm min}$ be the smallest eigenvalue of $\mathbf{A}$. Since the matrix $\mathbf{B}^T\mathbf{B}$ is positive semi-definite, by formula (\ref{Eq2}), it follows that $$\lambda_{\rm min}\geqslant -\binom{m+1}{2}.$$ Therefore  $\det(\mathbf{B}^T\mathbf{B})\neq 0$ if and only if $\lambda_{\rm min}>-\binom{m+1}{2}$.

The above discussion can be summarized as follows. 

\begin{cor}\label{ineq}
If the smallest eigenvalue  $\lambda_{\rm min}$ of the adjacency matrix of the graph $\G_m(G)$ satisfies the inequality 
$$\lambda_{\rm min}>-\binom{m+1}{2},$$
then $T_G(m)=Q_G(m)$.

\end{cor}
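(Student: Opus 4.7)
The plan is to prove $T_G(m)=Q_G(m)$ by showing that for each fixed multi-index $\mathbf{i}=(i_1,\dots,i_m)$, every monomial $\omega\in\Omega_{\mathbf{i}}$ can be written as a $\mathbb{Q}$-linear combination of trace elements $a\operatorname{tr}(b)c$. Since $Q_G(m)$ is spanned by such $\Omega_{\mathbf{i}}$'s, this suffices. The entire burden is therefore linear-algebraic: one must show that the system of equations (\ref{Eq}), viewed as a linear system over $\mathbb{Q}$ in the $|G|^m$ unknowns $\omega\in\Omega_{\mathbf{i}}$, has a solution for each right-hand side $a\operatorname{tr}(b)c$.

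Let $\mathbf{B}$ be the $\binom{m+1}{2}|G|^{m-1}\times |G|^m$ coefficient matrix described in the excerpt. The two key combinatorial counts already identified are: each row of $\mathbf{B}$ contains exactly $|G|$ ones (the summation over $h\in G$ in a single equation), and each column of $\mathbf{B}$ contains exactly $\binom{m+1}{2}$ ones (one for each of the $\binom{m+1}{2}$ partitions producing a trace element involving the given $\omega$). From these counts one reads off $\mathbf{B}^T\mathbf{B}=\binom{m+1}{2}\mathbf{I}+\mathbf{A}$ for a symmetric $0/1$ matrix $\mathbf{A}$ with zero diagonal. The main conceptual step is then to verify that $\mathbf{A}$ is exactly the adjacency matrix of $\G_m(G)$: for distinct monomials $\omega,\omega'$ corresponding to tuples $\g,\h\in G^m$, the $(\omega,\omega')$-entry of $\mathbf{B}^T\mathbf{B}$ counts rows in which both $\omega$ and $\omega'$ appear, and such a row exists precisely when $\h=\x_{[k,l)}\g$ for some interval $[k,l)$ and some $x\in G^\times$, which is the adjacency relation in $\G_m(G)$.

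To finish, I would argue that solvability of the system for every right-hand side reduces to $\mathbf{B}$ having rank $|G|^m$, which is equivalent to $\mathbf{B}^T\mathbf{B}$ being nonsingular. Two pieces then combine: the Cauchy–Binet identity $\det(\mathbf{B}^T\mathbf{B})=\sum_i(\det \mathbf{B}_i)^2$ shows that nonsingularity of $\mathbf{B}^T\mathbf{B}$ produces a square $|G|^m\times|G|^m$ submatrix $\mathbf{B}_i$ of $\mathbf{B}$ of full rank, so the associated subsystem uniquely expresses the $\omega$'s in terms of trace elements; and since $\mathbf{B}^T\mathbf{B}=\binom{m+1}{2}\mathbf{I}+\mathbf{A}$, its spectrum is $\{\binom{m+1}{2}+\lambda:\lambda\in\mathrm{Spec}(\mathbf{A})\}$, so $\det(\mathbf{B}^T\mathbf{B})\neq 0$ is equivalent to $\lambda_{\min}>-\binom{m+1}{2}$ (and in fact $\lambda_{\min}\ge -\binom{m+1}{2}$ is automatic from positive semidefiniteness of $\mathbf{B}^T\mathbf{B}$).

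The only mildly delicate point I anticipate is the column/row count for $\mathbf{B}$ and the identification of $\mathbf{A}$ with the adjacency matrix: one must carefully check that two distinct monomials $\omega,\omega'$ can coexist in \emph{at most} one equation (\ref{Eq}), so that off-diagonal entries of $\mathbf{B}^T\mathbf{B}$ are genuinely $0$ or $1$ rather than larger integers. Everything else is bookkeeping.
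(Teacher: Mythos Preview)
Your proposal is correct and follows essentially the same route as the paper: set up the linear system with coefficient matrix $\mathbf{B}$, compute $\mathbf{B}^T\mathbf{B}=\binom{m+1}{2}\mathbf{I}+\mathbf{A}$ from the row/column counts, identify $\mathbf{A}$ as the adjacency matrix of $\G_m(G)$ (using that two distinct monomials share at most one equation), and combine Cauchy--Binet with the spectral shift to conclude. The ``delicate point'' you flag about coexistence in at most one equation is exactly the observation the paper makes when justifying that the off-diagonal entries of $\mathbf{B}^T\mathbf{B}$ are $0/1$.
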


In general, there is no simple explicit formula for computing eigenvalues of a Cayley graph. When the symmetric set $S\subset G^\times$ is normal (that is $s^g\in S$ for all $s\in S$, $g\in G$) the spectrum of  $Cay(G,S)$ can be computed explicitly in terms of the
complex character values (see \cite{Z}). Namely, if $\mathbf{Irr}(G)=\{\chi_1,\dots,\chi_t\}$ is the set of all irreducible characters of $G$, then for $j=1,\dots,t$
\begin{equation}\label{eigenvalue}
\lambda_j=\frac{1}{\chi_j(e)}\sum_{s\in S}\chi_j(s)
\end{equation}
are all eigenvalues of $Cay(G,S)$. Moreover the multiplicity of $\lambda_j$ is equal to $\sum\limits_{k, \lambda_k=\lambda_j} \chi_k(e)^2$. 

But the symmetric set $\mathcal{S}$ of  $\G_m(G)$ is normal  only in the case when the group $G$ is abelian. Thus the above formulas for $\lambda _j$ can be applied to abelian groups only. It is well known (see \cite{I}, Theorem 4.21) that if $\mathbf{Irr}(G)=\{\chi_1,\dots,\chi_s\}$, then the set of all irreducible characters of $G^m$ consists of the characters $\chi=\chi_{i_1}\times\dots\times\chi_{i_m}$ defined as:
$$
\chi(g_1,\dots,g_m)=\chi_{i_1}(g_1)\cdot ...\cdot\chi_{i_m}(g_m),
$$
where $\chi_{i_1},\dots,\chi_{i_m}\in \mathbf{Irr}(G).$  Furthermore, if $G$ is abelian, then irreducible characters are linear and  form the group $\widehat{G}=\mathbf{Hom}(G,\mathbb{C}^*)\simeq G$. Let $\chi_1=\mathbf{1}$ be the identity of $\widehat{G}$. The orthogonality relations for characters yield that if $\chi_{j_1},\dots,\chi_{j_k}\in \widehat{G}$, then
\begin{equation}\label{sum_a}
\sum_{g\in G^\times}\chi_{j_1}(g)\dots\chi_{j_k}(g)=\left\{\begin{array}{cl}
|G|-1\  \  &\  \    {\rm if}\  \   \chi_{j_1}\cdot ...\cdot\chi_{j_k}=\mathbf{1} \\
-1\  \  &\  \   {\rm if}\  \   \chi_{j_1}\cdot ...\cdot\chi_{j_k}\neq\mathbf{1}
\end{array}\right.
\end{equation}

\begin{prop}
Let $G$ be a finite abelian group.  If $m\geqslant |G|$, then each eigenvalue $\lambda$ of $\G_m(G)$ satisfies
the inequality
$$
\lambda >-\binom{m+1}{2}
$$
Therefore $T_G(m)=Q_G(m)$ for $m\geqslant |G|$.
\end{prop}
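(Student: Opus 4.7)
The plan is to apply the explicit character formula \eqref{eigenvalue} to $G^m$ equipped with the canonical symmetric set $\Ss$, reduce the eigenvalue bound to a purely combinatorial statement about characters of $G$, and then close by a pigeonhole argument; Corollary~\ref{ineq} will then immediately give $T_G(m)=Q_G(m)$. Since $G^m$ is abelian, every irreducible character is linear, so $\chi(e)=1$ and the eigenvalues are simply $\lambda_\chi=\sum_{s\in\Ss}\chi(s)$. Writing an arbitrary irreducible character of $G^m$ as $\chi=\chi_{i_1}\times\dots\times\chi_{i_m}$ with $\chi_{i_j}\in\widehat{G}$, one has $\chi(\x_{[k,l)})=\prod_{j=k}^{l-1}\chi_{i_j}(x)$ on every interval element.

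Next I would decompose $\Ss$ into its $\binom{m+1}{2}$ (pairwise disjoint) intervals $\pr{k}{l}$ and apply \eqref{sum_a} to each inner sum. Setting
\[
\psi_{k,l}:=\chi_{i_k}\chi_{i_{k+1}}\cdots\chi_{i_{l-1}}\in\widehat{G}
\]
and letting $N=N(\chi)$ denote the number of pairs $(k,l)$ with $1\leqslant k<l\leqslant m+1$ for which $\psi_{k,l}=\mathbf{1}$, the sum collapses to
\[
\lambda_\chi=N(|G|-1)+\Bigl(\binom{m+1}{2}-N\Bigr)(-1)=N|G|-\binom{m+1}{2}.
\]
Consequently $\lambda_\chi>-\binom{m+1}{2}$ is equivalent to $N\geqslant 1$, so it remains to show that, for every choice of the $\chi_{i_j}$, some consecutive block of them multiplies to the trivial character.

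The last step is the only real content, and it is pure pigeonhole. Consider the partial products $\pi_0=\mathbf{1}$ and $\pi_j=\chi_{i_1}\chi_{i_2}\cdots\chi_{i_j}$ for $1\leqslant j\leqslant m$. These are $m+1$ elements of the dual group $\widehat{G}$, which has order $|G|$. When $m\geqslant|G|$ we have $m+1>|G|$, so two of the partial products must coincide: $\pi_{k-1}=\pi_{l-1}$ for some $1\leqslant k<l\leqslant m+1$. But then $\psi_{k,l}=\pi_{k-1}^{-1}\pi_{l-1}=\mathbf{1}$, so $N(\chi)\geqslant 1$ and $\lambda_\chi\geqslant |G|-\binom{m+1}{2}>-\binom{m+1}{2}$. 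Corollary~\ref{ineq} now yields $T_G(m)=Q_G(m)$. There is no serious obstacle here; the only conceptual point is recognising that the condition \emph{``some block of consecutive characters multiplies to $\mathbf{1}$''} telescopes into \emph{``two partial products coincide''}, which is precisely what makes the pigeonhole available under the hypothesis $m\geqslant|G|$.
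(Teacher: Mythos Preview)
Your proof is correct and follows essentially the same approach as the paper: compute the eigenvalue via the character formula, split over the intervals using \eqref{sum_a} to get $\lambda_\chi=N|G|-\binom{m+1}{2}$, and then use pigeonhole on partial products in $\widehat G$ to force $N\geqslant 1$. Your version with the $m+1$ partial products $\pi_0,\pi_1,\dots,\pi_m$ (including $\pi_0=\mathbf{1}$) makes the pigeonhole step under $m\geqslant|G|$ slightly cleaner than the paper's phrasing, but the argument is the same.
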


\begin{proof}
Take $\chi=\chi_{i_1}\times\dots\times\chi_{i_m} \in \mathbf{Irr}(G^m)$. Since $G$ is abelian,  
$\chi(\e)=\chi_{i_1}(e)\dots\chi_{i_m}(e)=1$. 
According to (\ref{eigenvalue})  and (\ref{sum_a}) the eigenvalue 
$\lambda$  corresponding  to $\chi$  is equal:
$$
\begin{array}{ll}
\lambda &= \sum\limits_{1\leqslant k<l\leqslant m+1}\sum\limits_{g\in G^\times}\chi_{i_1}(e)\cdot ...\cdot\chi_{i_{k-1}}(e)
\chi_{i_k}(g)\cdot ...\cdot\chi_{i_{l-1}}(g)\chi_{i_l}(e)\cdot ...\cdot\chi_{i_m}(e)\\  \\
& = \sum\limits_{1\leqslant k<l\leqslant m+1}\sum\limits_{g\in G^\times}
\chi_{i_k}(g)\cdot ...\cdot\chi_{i_{l-1}}(g) =-\binom{m+1}{2}+n_\chi|G|,
\end{array}
$$
where $n_\chi$ is the number of pairs $(k,l)$ such that $1\leqslant k<l\leqslant m+1$ and
$\chi_{i_k}\cdot ...\cdot\chi_{i_{l-1}}=\mathbf{1}$. If $m\geqslant |G|$, then the pigeonhole principle implies that some elements of the sequence 
$$
\chi_{i_1},\   \chi_{i_1}\chi_{i_2},\  \dots, \chi_{i_1}\chi_{i_2}\cdot ...\cdot\chi_{i_m}
$$
must be equal, that is $\chi_{i_k}\cdot \  ...\  \cdot\chi_{i_{l-1}}=\mathbf{1}$ for some $k<l$. Therefore $n_\chi>0$, and hence $\lambda >-\binom{m+1}{2}.$ 
\end{proof}

It is worth emphasizing that the expression of monomials from $Q_G(m)$ as a linear combination of {\em trace monomials} from $T_G(m)$ is not immediate even for small groups. This is already seen for the cyclic group of order 3.

\begin{example}{\rm  Let  $G=\{e,g, g^2\}$ be the group of order $3$.  Inverting a suitable $27\times 27$ submatrix $\mathbf{B}_i$ of the matrix of the system of equations (\ref{Eq}) we obtain the identity}
{\footnotesize
$$
\begin{array}{lllllll}
\medskip
9\cdot\zeta_{1,e}\zeta_{2,e}\zeta_{3,e} 
&=& 3\cdot\zeta_{1,e}\zeta_{2,e}\tr[\zeta_{3,e}]
&+& 5\cdot\zeta_{1,e}\tr[\zeta_{2,e}]\zeta_{3,e} 
&-& 3\cdot\zeta_{1,e}\tr[\zeta_{2,e}\zeta_{3,e}]\\ \medskip

&+& 4\cdot\tr[\zeta_{1,e}\zeta_{2,e}\zeta_{3,e}]
&-& 5\cdot\zeta_{1,e}\tr[\zeta_{2,e }\zeta_{3,g }]
&+& 4\cdot\tr[\zeta_{1,e}]\zeta_{2,e }\zeta_{3,g}\\ \medskip

&-& 2\cdot\tr[\zeta_{1,e}\zeta_{2,e}\zeta_{3,g}]
&+& \  \tr[\zeta_{1,e}]\zeta_{2,e}\zeta_{3,g^2}
&-& 4\cdot\tr[\zeta_{1,e}\zeta_{2, e}\zeta_{3,g^2 }] \\ \medskip

&-& 5\cdot\tr[\zeta_{1,e}\zeta_{2, g}]\zeta_{3,e}
&+& 3\cdot\tr[\zeta_{1,e}]\zeta_{2,g }\zeta_{3,g }
&+& 2\cdot\tr[\zeta_{1,e}]\zeta_{2,g}\zeta_{3,g^2}\\ \medskip

&+& 3\cdot\tr[\zeta_{1,e}\zeta_{2, g}]\zeta_{3,g^2}
&+& 5\cdot\tr[\zeta_{1,e}]\zeta_{2,g^2}\zeta_{3,g} 
&-& 5\cdot \tr[\zeta_{1,e}\zeta_{2,g^2}\zeta_{3,g}]\\ \medskip

&+& 5\cdot\tr[\zeta_{1,e}\zeta_{2,g^2}]\zeta_{3,g^2}
&-& 2\cdot\tr[\zeta_{1,e}\zeta_{2,g^2}\zeta_{3,g^2}]
&-& 3\cdot\zeta_{1,g}\tr[\zeta_{2, e}\zeta_{3,e}] \\ \medskip

&+& 5\cdot\zeta_{1,g}\tr[\zeta_{2,e }]\zeta_{3,e}  
&-& 4\cdot\zeta_{1,g}\tr[\zeta_{2,e}]\zeta_{3,g} 
&-& \  \zeta_{1,g}\tr[\zeta_{2,e}\zeta_{3,g^2}]\\ \medskip

&+& 5\cdot\zeta_{1,g^2}\tr[\zeta_{2,e}]\zeta_{3,e}
&-& \  \zeta_{1,g^2}\tr[\zeta_{2,e}]\zeta_{3,g}
&-& 3\cdot\zeta_{1,g^2}\tr[\zeta_{2,e}\zeta_{3,g}] \\  \medskip

&-& 4\cdot \zeta_{1,g^2}\tr[\zeta_{2,e}]\zeta_{3,g^2}
\end{array}
$$}
\end{example}

Unfortunately, the general case goes beyond this scheme. So we state the following

\medskip

\begin{question} Whether the smallest eigenvalue  $\lambda_{\rm min}$ of the adjacency matrix of the graph $\G_m(G)$ satisfies the inequality 
$$
\lambda_{\rm min}>-\binom{m+1}{2}
$$
for any group $G$ and $m\geqslant |G|$?
\end{question}

%Nevertheless we are conviced that this  approach can appear as an effective tool in studying properties of groups.

\section{Elementary properties}

In this section we collect some basic properties of the graphs $\G_m(G)$. We begin with the following easy observation.

\begin{lem} Let $G$ be a group and $m>1$. For any $\g\in G^m\setminus\{\e\}$ there exists a sequence $1\leqslant i_1<i_2<\dots<i_{k} < i_{k+1}\leqslant m+1$ and elements $x_1,\dots,x_k$ in $G$ such that
\begin{enumerate}
\item $x_1\neq e \neq x_k$;
\item $x_i\neq x_{i+1}$ for $i=1,2,\dots,k-1$;
\item $\g={\x_1}_{[i_1,i_2)} {\x_2}_{[i_2,i_3)}\dots {\x_k}_{[i_k,i_{k+1})}$.
\end{enumerate}
\end{lem}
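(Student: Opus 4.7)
The proof is essentially a direct construction, reading off the ``runs'' of $\g$.

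First I would identify the outer boundaries. Since $\g \neq \e$, the set $J = \{j : g_j \neq e\}$ is nonempty; let $a = \min J$ and $b = \max J$. I will take $i_1 = a$ and $i_{k+1} = b+1$, so by construction $1 \leqslant i_1$ and $i_{k+1} \leqslant m+1$, and the coordinates of $\g$ strictly before $i_1$ or at/after $i_{k+1}$ are all $e$.

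Next I would partition the interval $[a,b]$ into maximal runs of equal coordinates. Formally, define $i_1 = a$, and inductively set
\[
i_{j+1} = \min\{p : i_j < p \leqslant b+1,\ g_p \neq g_{i_j}\},
\]
with the convention $g_{b+1}$ is any symbol different from $g_b$, so that the process terminates exactly when $i_{j+1} = b+1$. This produces $a = i_1 < i_2 < \dots < i_{k+1} = b+1$, and I set $x_j = g_{i_j}$. The key observation is that on each block $[i_j, i_{j+1})$ the coordinates of $\g$ are constantly equal to $x_j$.

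Now I would verify the three conditions in turn. For (1): $x_1 = g_{i_1} = g_a \neq e$ by definition of $a$, and $x_k = g_{i_k}$; since $g_p = x_k$ for all $p \in [i_k, i_{k+1}) = [i_k, b+1)$, in particular $g_b = x_k$, and $g_b \neq e$ by definition of $b$. For (2): $x_{j+1} = g_{i_{j+1}} \neq g_{i_j} = x_j$ by the very choice of $i_{j+1}$. For (3): the product ${\x_1}_{[i_1,i_2)} {\x_2}_{[i_2,i_3)}\cdots {\x_k}_{[i_k,i_{k+1})}$ has pairwise disjoint ``support intervals'' $[i_j, i_{j+1})$, so its $p$-th coordinate equals $x_j$ when $p \in [i_j, i_{j+1})$ for some $j$, and equals $e$ otherwise; this matches $\g$ coordinatewise by our construction of the $i_j$ and $x_j$, and by the fact that $g_p = e$ for $p < a$ or $p > b$.

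There is no real obstacle here; the only subtlety is the bookkeeping at the endpoints (ensuring $x_1$ and $x_k$ are non-identity, which is what forces us to take $i_1$ and $i_{k+1}$ to be the extremal non-identity positions rather than $1$ and $m+1$), and phrasing the inductive choice of $i_{j+1}$ so that the recursion actually terminates at $b+1$.
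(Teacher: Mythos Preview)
Your proof is correct; the paper itself does not supply a proof of this lemma, calling it an ``easy observation'' and proceeding directly to the definition of weight and the uniqueness remark. Your run-length decomposition is exactly the intended construction.
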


It is clear that the decomposition in point (3) is unique for $\g$. The number $\vartheta(\g)=k$  we call the {\em weight} of $\g$ and the decomposition we call the {\em weight decomposition} of $\g$. It is clear that   $1\leqslant \vartheta(\g)\leqslant m$  for any $\g\neq\e$.

\begin{example}{\rm Let $a,b,c$ be different elements of $G^\times$ and let 
$$
\g=(e,a,a,b,b,b,c,e,e),\   \h=(e,a,e,e,b,b,c,c,e).
$$
Then we have the following weight decompositions:
$$ 
\g =\ga_{[2,4)}\gb_{[4,7)}\gc_{[7,8)}   \   {\rm  and }\  \vartheta(\g)=3
$$
$$
\h=\ga_{[2,3)}\e_{[3,5)}\gb_{[5,7)}\gc_{[7,9)}  \   {\rm  and }\  \vartheta(\h)=4.
$$}
\end{example}

The following lemma follows immediately from the 
definition of the set $\Ss$.

\begin{lem}\label{SS_in_S}
Let $\g_{[k,l)}$ be a fixed element of $\Ss$, $g\in G^{\times}$, $1\leqslant k<l\leqslant m+1$. Then for $h\in G^{\times}$ and $1\leqslant i<j\leqslant m+1$, 
$$\h_{[i,j)}\g_{[k,l)}\in \Ss$$
if and only if one of the following conditions is satisfied:\smallskip\par
\noindent
$
\begin{array}{llll} 
{\rm 1.} & h\neq g^{-1} & {\rm and} & (i=k,\  j=l); \\
{\rm 2.} & h=g & {\rm and} & (j=k \ \ {\rm or}\ \ i=l); \\
{\rm 3.} & h=g^{-1} &{\rm and} & (i\neq k,\ j=l\ \  {\rm or}\ \ i=k,\ j\neq l). 
\end{array}
$
\end{lem}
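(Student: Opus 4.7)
The plan is to read off the product coordinate by coordinate and determine when the resulting element has the very rigid shape $\x_{[p,q)}$. Since multiplication in $G^m$ is componentwise, the element $\h_{[i,j)}\g_{[k,l)}$ has value $h$ on $[i,j)\setminus[k,l)$, value $g$ on $[k,l)\setminus[i,j)$, value $hg$ on $[i,j)\cap[k,l)$, and value $e$ on the remaining coordinates. Lying in $\Ss$ means that the set of coordinates with a non-$e$ value is a single contiguous block of the form $[p,q)$, and that a common element $x\in G^\times$ appears on every coordinate of that block. So the proof reduces to checking when the three potentially nonempty pieces $[i,j)\setminus[k,l)$, $[i,j)\cap[k,l)$, $[k,l)\setminus[i,j)$, carrying values $h$, $hg$, $g$ respectively, can be merged into one interval carrying one value.

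First I would dispose of the easy extreme cases. If $[i,j)=[k,l)$ (so $i=k$ and $j=l$), the symmetric differences are empty and every coordinate in the common interval carries $hg$; this lies in $\Ss$ iff $hg\neq e$, i.e.\ $h\neq g^{-1}$, giving condition~1. If the two intervals are disjoint, then the non-$e$ coordinates split into the two blocks $[i,j)$ (valued $h$) and $[k,l)$ (valued $g$); contiguity forces them to be adjacent, i.e.\ $j=k$ or $i=l$, and constancy of the value forces $h=g$, giving condition~2.

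Next I would handle the overlapping-but-unequal cases. Because we need at most one contiguous non-$e$ block, the middle piece $[i,j)\cap[k,l)$ has to vanish (i.e.\ $hg=e$, so $h=g^{-1}$) as soon as the three pieces contribute distinct values — otherwise the $hg$-block separates a $h$-block from a $g$-block. Setting $h=g^{-1}$, what remains is the symmetric difference $[i,j)\triangle[k,l)$ with value $g^{\pm 1}$, and the lemma now follows from the elementary fact that $[i,j)\triangle[k,l)$ is a (single) interval precisely when one of the endpoints coincides: either $i=k$ with $j\neq l$, or $j=l$ with $i\neq k$, which is exactly condition~3. In the remaining sub-case where one interval is strictly nested inside the other but neither endpoint coincides, the symmetric difference splits into two genuine pieces and the product is not in $\Ss$; in the remaining sub-case where the intervals cross but neither contains the other, a short check shows the $hg$-piece actually separates two non-$e$ pieces, so again nothing works.

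The content of the argument is genuinely just a bookkeeping exercise; there is no real obstacle, only the mild nuisance of enumerating the mutual positions of $[i,j)$ and $[k,l)$ (equal, adjacent with $j=k$, adjacent with $i=l$, disjoint with a gap, overlapping with one containing the other, overlapping with neither containing the other) and checking each, which I would package into the three summary conditions above.
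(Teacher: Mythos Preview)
Your proposal is correct and is precisely the coordinate-by-coordinate verification the paper has in mind; the paper itself omits the proof entirely, stating only that the lemma ``follows immediately from the definition of the set $\Ss$.'' One minor wording point: your sentence ``the $hg$-block separates a $h$-block from a $g$-block'' is literally accurate only in the crossing configuration, whereas in the nested configuration the picture is $g$--$hg$--$g$ (or $h$--$hg$--$h$); the conclusion that $hg$ must vanish is still forced in both cases simply because $hg\neq h$ and $hg\neq g$ (as $g,h\in G^\times$), so the non-$e$ values cannot all coincide unless the middle piece is $e$.
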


\begin{lem}\label{weight}
If $\g,\h\in G^m\setminus\{\e\}$ are adjacent in $\G_m(G)$, then
$|\vartheta(\g)-\vartheta(\h)|\leqslant 2.$
\end{lem}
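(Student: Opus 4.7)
The plan is to reformulate the weight $\vartheta$ as a transition count of the underlying sequence, which behaves almost rigidly under left-multiplication by a single interval element.

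For a non-trivial $\g=(g_1,\dots,g_m)\in G^m$, I would introduce the sentinels $g_0=g_{m+1}=e$ and define
$$
\tau(\g)=\bigl|\{j\in\{1,2,\dots,m+1\}:g_{j-1}\neq g_j\}\bigr|.
$$
If the weight decomposition of $\g$ reads $\g={\x_1}_{[j_1,j_2)}{\x_2}_{[j_2,j_3)}\cdots {\x_s}_{[j_s,j_{s+1})}$ with $s=\vartheta(\g)$, then the extended sequence exhibits the block pattern $e,\dots,e,x_1,\dots,x_1,x_2,\dots,x_s,\dots,x_s,e,\dots,e$ in which every two consecutive blocks carry distinct values, by properties (1) and (2) of Lemma 3.1. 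The transitions of $\tau$ thus occur precisely at $j=j_1,\dots,j_{s+1}$, giving the identity $\tau(\g)=\vartheta(\g)+1$.

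Next I would compare $\tau(\h)$ and $\tau(\g)$ index-by-index, writing $\h=\x_{[k,l)}\cdot\g$ with $x\in G^\times$ and $1\leqslant k<l\leqslant m+1$. For $j\notin\{k,k+1,\dots,l\}$, neither $h_{j-1}$ nor $h_j$ differs from the corresponding entry of $\g$, so the indicator $h_{j-1}\neq h_j$ coincides with $g_{j-1}\neq g_j$. For the interior indices $j\in\{k+1,\dots,l-1\}$, both entries satisfy $h_{j-1}=xg_{j-1}$ and $h_j=xg_j$, and the indicator is again preserved since left-multiplication by $x$ is a bijection. Only the two boundary indices $j=k$ and $j=l$ can exhibit a discrepancy, and each can alter the transition count by at most $\pm 1$. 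Hence $|\tau(\h)-\tau(\g)|\leqslant 2$, and combining with $\tau=\vartheta+1$ for both $\g$ and $\h$ yields the lemma.

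The argument is purely combinatorial bookkeeping; the only subtlety lies in the edge cases $k=1$ or $l=m+1$, which are absorbed uniformly by the sentinel convention $g_0=g_{m+1}=e$, and in checking that $\h\neq\e$ so that $\tau(\h)=\vartheta(\h)+1$ remains valid — but this is built into the hypothesis of the lemma.
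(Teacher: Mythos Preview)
Your proof is correct. The core insight is identical to the paper's---left-multiplication by $\x_{[k,l)}$ can only disturb the block structure at the two boundary positions $j=k$ and $j=l$---but your packaging via the transition count $\tau(\g)=\vartheta(\g)+1$ is cleaner than the paper's approach. The paper writes out a ``formal weight decomposition'' of $\h=\y_{[l,s)}\g$ and then runs through several sub-cases (whether the endpoints of $[l,s)$ coincide with existing block boundaries, whether adjacent blocks merge, etc.); your index-by-index comparison of the indicators $[g_{j-1}\neq g_j]$ and $[h_{j-1}\neq h_j]$ handles all of this uniformly and makes the bound $\pm 2$ immediate. The sentinel convention $g_0=g_{m+1}=e$ is exactly the right device to absorb the edge cases $k=1$ and $l=m+1$ without special treatment.
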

\begin{proof}
Suppose that $\g={\x_1}_{[i_1,i_2)} {\x_2}_{[i_2,i_3)}\dots {\x_k}_{[i_k,i_{k+1})}$. Since $\g$ and $\h$ are adjacent, there exists $y\in G^\times$ and $1\leqslant l<s\leqslant m+1$ such that 
$$
\h=\y_{[l,s)}\g=\y_{[l,s)}{\x_1}_{[i_1,i_2)} {\x_2}_{[i_2,i_3)}\dots {\x_k}_{[i_k,i_{k+1})}.
$$
Take $r\leqslant t$ such that $ i_r\leqslant l< i_{r+1}$ and $i_t\leqslant s< i_{t+1}$. Then we can write a formal  weight decomposition of $\h$ 
$$
\h={\x_1}_{[i_1,i_2)} \dots {\x_{r-1}}_{[i_{r-1},i_{r})}{\x_{r}}_{[i_{r},l)}{(\y\x_r)}_{[l,i_{r+1})}\dots{(\y\x_t)}_{[i_t,s)}{\x_t}_{[s,i_{t+1})}\dots {\x_k}_{[i_k,i_{k+1})}.
$$
The factors 
$$
{\x_1}_{[i_1,i_2)},\ \ldots,\ {\x_{r-2}}_{[i_{r-2},i_{r-1})},\ {(\y\x_{r+1})}_{[i_{r+1},i_{r+2})},\ \ldots ,\ {(\y\x_{t-1})}_{[i_{t-1},i_t)},\ {\x_{t+2}}_{[i_{t+2},i_{t+3})},\ \ldots,\ {\x_k}_{[i_k,i_{k+1})}$$ 
are the weight components of $\h$. If $i_r < l$, then 
$$
{\x_{r-1}}_{[i_{r-1},i_{r})},\ {\x_{r}}_{[i_{r},l)}\ \ {\rm and}\ \ {(\y\x_r)}_{[l,i_{r+1})}
$$ 
are also such components. In a sense the weight component ${\x_{r}}_{[i_{r},i_{r+1})}$ of $\g$ creates two weight components of $\h$. If $i_r=l$, then the factor ${\x_{r}}_{[i_{r},l)}$ disappear. Moreover if $x_{r-1}\neq yx_r$, then ${\x_{r-1}}_{[i_{r-1},i_{r})}$ and ${(\y\x_r)}_{[l,i_{r+1})}$ are 
weight components of $\h$, if $x_{r-1}=yx_r$, then both elements create one weight component equal to 
${\x_{r-1}}_{[i_{r-1},i_{r+1})}$. Similar situation is created by relations between $s$ and $i_t$.
Therefore the numbers of factors in weight decompositions of $\g$ and $\h$ may differ at most by $2$.
\end{proof}

For $\g\in G^m$ let $V(\g)$ be the set of all neighbours of $\g$ in the graph $\G_m(G)$.

\begin{prop} \label{neighbours} Let $G$ be a group and $m>1$. Then for any element $\g\in G^m\setminus\{\e\}$:
\begin{enumerate}
\item  $\g\in V(\e)$ if and only if $\vartheta(\g)=1$. In this case $|V(\e)\cap V(\g)|=|G|+2m-4$;
\item  if $\vartheta(\g)=2$, then $|V(\e)\cap V(\g)|=6$;
\item  if $\vartheta(\g)=3$, then $|V(\e)\cap V(\g)|$ belongs to the set $\{0,1,2,4,6\}$. More precisely \medskip\par
\noindent
{\footnotesize 
$$
\begin{array}{|c|c|c|}
\hline
&& \\[-6pt]
 |V(\e)\cap V(\g)|  & \g &  {\rm Conditions}\\[-4pt] 
 && \\[-6pt]
 \hline 
&& \\[-10pt]
6 & \x_{[i_1,i_2)}\e_{[i_2,i_3)}\x_{[i_3,i_4)} &  o(x)=2 \\[4pt]
\hline 
&& \\[-10pt]
4 & \x_{[i_1,i_2)}\e_{[i_2,i_3)}\x_{[i_3,i_4)} & o(x)>2 \\[4pt]
& \x_{[i_1,i_2)}\x_{[i_2,i_3)}^2\x_{[i_3,i_4)} & \\[4pt]
& \x_{[i_1,i_2)}\e_{[i_2,i_3)}\x_{[i_3,i_4)}^{-1} & \\[4pt]
\hline 
&& \\[-10pt]
2 & \x_{[i_1,i_2)}\y_{[i_2,i_3)}\z_{[i_3,i_4)} & y\neq e,\ xy=yx,\ z=x^{-1}y\ \\[4pt]
  & \x_{[i_1,i_2)}\e_{[i_2,i_3)}\z_{[i_3,i_4)} & x\neq z\neq x^{-1} \\[4pt]
  & \x_{[i_1,i_2)}\y_{[i_2,i_3)}\x_{[i_3,i_4)} & x^2\neq y\neq e  \\[4pt]
\hline 
&& \\[-10pt]
1 & \x_{[i_1,i_2)}\y_{[i_2,i_3)}\z_{[i_3,i_4)} &  xy\neq yx\ {\rm and\ either}  \\[4pt]
  &                                            &   z=x^{-1}y\ {\rm or}\ z=yx^{-1}\\[4pt]
\hline 
&& \\[-10pt]
0 & \x_{[i_1,i_2)}\y_{[i_2,i_3)}\z_{[i_3,i_4)} & x\neq z,\ y\neq e,\ z\not\in\{x^{-1}y,yx^{-1}\} \\[4pt]
\hline
\end{array}
$$}\vspace{3pt}

\item if $\vartheta(\g)\geqslant 4$, then $V(\e)\cap V(\g)=\emptyset$.
\end{enumerate}
\end{prop}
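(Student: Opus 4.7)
The plan is to reformulate $|V(\e)\cap V(\g)|$ as a factorization count. Indeed, $\h$ is a common neighbor of $\e$ and $\g$ iff $\h\in\Ss$ and $\h\g^{-1}\in\Ss$; setting $\s_1=\h$ and $\s_2=\g\h^{-1}$ and using that $\Ss$ is symmetric yields a bijection
$$
V(\e)\cap V(\g) \;\longleftrightarrow\; \{(\s_1,\s_2)\in\Ss\times\Ss : \s_2\s_1=\g\}.
$$
So the task reduces to enumerating ordered factorizations of $\g$ into two interval elements.

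The key structural ingredient is that for any $\s_1=\x_{[i,j)}$ and $\s_2=\y_{[p,q)}$ in $\Ss$, the product $\s_2\s_1$ has weight at most $3$. A short case analysis on the relative position of the intervals $[i,j)$ and $[p,q)$ (disjoint, coinciding, nested, overlapping on one side) establishes this, since on each maximal piece of $[i,j)\cup[p,q)$ the product takes a constant value from $\{x,y,yx,e\}$ and is $e$ outside. This yields part~(4) at once: an element of weight $\ge 4$ admits no factorization $\s_2\s_1$ with $\s_1,\s_2\in\Ss$.

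For part~(1), with $\g=\x_{[k,l)}$ and hence $\g^{-1}=(x^{-1})_{[k,l)}$, I apply Lemma~\ref{SS_in_S} (taking $(x^{-1})_{[k,l)}$ in the role of the fixed interval) to decide which $\y_{[i,j)}\in\Ss$ satisfy $\y_{[i,j)}\g^{-1}\in\Ss$. The three mutually exclusive cases of the lemma contribute $|G|-2$, $m+k-l$, and $m+l-k-2$ admissible triples $(y,i,j)$, totalling $|G|+2m-4$. For part~(2), with $\g=\x_{[i_1,i_2)}\y_{[i_2,i_3)}$ and $x\ne y$ in $G^\times$, the six factorizations are obtained by taking $\s_1$ to be one of $\x_{[i_1,i_2)}$, $\y_{[i_2,i_3)}$, $\x_{[i_1,i_3)}$, $\y_{[i_1,i_3)}$, $(y^{-1}x)_{[i_1,i_2)}$, or $(x^{-1}y)_{[i_2,i_3)}$; each forces $\s_2=\g\s_1^{-1}$ uniquely, all six are distinct because $x\ne y$, and the structural fact shows no others exist.

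The main work, and principal obstacle, is part~(3). Write $\g=\x_{[i_1,i_2)}\y_{[i_2,i_3)}\z_{[i_3,i_4)}$ with $x,z\in G^\times$, $x\ne y\ne z$, and $y$ possibly trivial. The structural observation forces the support of any $\s_1\in\Ss$ appearing in a factorization $\s_2\s_1=\g$ to be a subinterval of $[i_1,i_4)$ whose endpoints lie in $\{i_1,i_2,i_3,i_4\}$, leaving finitely many candidate configurations to check. For each configuration the underlying group element of $\s_1$ is forced by the block decomposition of $\g$, and admissibility ($\s_1,\s_2\in\Ss$) reduces to one of the algebraic conditions appearing in the table: $o(x)=2$, $xy=yx$, $z=x^{-1}y$, $z=yx^{-1}$, and the like. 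The delicate step is to avoid double counting and phantom factorizations: $x=x^{-1}$ creates extra symmetric factorizations that drive the first row from $4$ to $6$, while non-commutativity $xy\ne yx$ eliminates all but one solution in the $|V(\e)\cap V(\g)|=1$ row. Collecting the surviving configurations according to the algebraic type of $(x,y,z)$ produces the five table values $\{0,1,2,4,6\}$ exactly.
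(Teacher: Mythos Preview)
Your proposal is correct and follows essentially the same approach as the paper. Your factorization reformulation $V(\e)\cap V(\g)\leftrightarrow\{(\s_1,\s_2)\in\Ss\times\Ss:\s_2\s_1=\g\}$ is just a repackaging of the paper's parametrization $\x=\h\g$ with $\h,\x\in\Ss$; your structural observation that a product of two interval elements has weight at most~$3$ corresponds to the paper's Lemma~\ref{weight}, and your constraint that the endpoints of $\s_1$ must lie in $\{i_1,i_2,i_3,i_4\}$ is exactly the paper's observation~(\ref{useful}), after which both proofs carry out the same finite case analysis for parts~(2) and~(3).
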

\begin{proof}
(1) It follows from the definition of the graph $\G_m(G)$ that $\g\in V(\e)$ if and only if $\g\in \Ss$ if and only if $\vartheta(\g)=1$. For the second part take $\g=\g_{[k,l)}$. Then $\x\in V(\e)\cap V(\g)$ if and only if $\x=\h_{[i,j)}\g_{[k,l)}\in \Ss$. All possible values for $\h_{[i,j)}$ according to conditions listed in Lemma \ref{SS_in_S} are equal \smallskip\par
\noindent$
\begin{array}{ll} 
\bullet & |G|-2; \\
\bullet & (k-1) + (m+1-l); \\
\bullet & (l-2) + (m-k) 
\end{array}
$\smallskip\par
\noindent respectively. Since all these values of $\h_{[i,j)}$ define different $\x$, we have 
$$|G|-2+(k-1)+(m+1-l)+(l-2)+(m-k)= |G|+2m-4$$
common neighbours of $\e$ and $\g$. 

\medskip
Before proving next parts of the proposition let us make a useful observation: 

\medskip

\noindent{\it Suppose that $1\leqslant i_1 < \cdots <i_{k+1}\leqslant m+1$, $1\leqslant i < j \leqslant m+1$,  $\g={\x_1}_{[i_1,i_2)}, {\x_2}_{[i_2,i_3)}\dots {\x_k}_{[i_k,i_{k+1})}$,   and $\h = \h_{[i,j)}\in \Ss$.  Then}
\begin{equation}\label{useful}
\vartheta(\h\g)<\vartheta(\g)\Rightarrow \{i,j\}\subseteq \{i_1,\ldots,i_{k+1}\}	
\end{equation}
\medskip

\noindent (2) If $\vartheta(\g)=2$, then $\g=\x_{[k,l)}\y_{[l,s)}$, where $e\neq x\neq y\neq e$, and $1\leqslant k<l<s\leqslant m+1$. Hence if $\h_{[i,j)}\in \Ss$ and $\h\g\sim \e$, then by (\ref{useful}) 
\begin{itemize}
	\item $(i,j) = (k,l)$ and $h=x^{-1}$ or $hx=y$ that is $h=yx^{-1}$ or 
	\item $(i,j) = (l,s)$ and $h=y^{-1}$ or $hy=x$ that is $h=xy^{-1}$ or 
	\item $(i,j) = (k,s)$ and $h=x^{-1}$ or $h=y^{-1}$. 
\end{itemize}
It gives six common neighbours of $\e$ and $\g$: 
 $$
 \y_{[l,s)},\ \y_{[k,s)},\ \x_{[k,l)},\  \x_{[k,s)},\   (\x^{-1}\y)_{[l,s)},\  (\y^{-1}\x)_{[k,l)}.
 $$
 Therefore $|V(\e)\cap V(\g)|=6$.
 
\medskip

\noindent (3) Suppose that $\vartheta(\g)=3$. Then $\g=\x_{[i_1,i_2)}\y_{[i_2,i_3)}\z_{[i_3,i_4)}$, where $x,z\in G^\times$, $x\neq y$, $y\neq z$ and $1\leqslant i_1<i_2<i_3<i_4\leqslant m+1.$ 
Any neighbour of $\g$,  has the form $\h_{[k,l)} \x_{[i_1,i_2)}\y_{[i_2,i_3)}\z_{[i_3,i_4)}$, where $h\in G^\times$ and $\{k,l\}\subseteq \{i_1,i_2,i_3,i_4\}$. 

Assume first that $x=z$. If $y\neq e$ and $y\neq x^2$ then for $\g=\x_{[i_1,i_2)}\y_{[i_2,i_3)}\x_{[i_3,i_4)}$ the element $\h\g$ has weight $1$ only when $\h={\x^{-1}}_{[i_1,i_4)}$ or $\h=(\x\y^{-1})_{[i_2,i_3)}$. So we get two neighbours of $\g$ of weight $1$: 
 \begin{equation}\label{x=z}
 (\x^{-1}\y)_{[i_2,i_3)}\ \  {\rm and}\ \ \x_{[i_1,i_4)}.
 \end{equation}
 If $y=x^2$, then beside these two vertices we have two other neighbours of $\g$ of weight $1$. Taking $\h={\x^{-1}}_{[i_1,i_3)}$ or $\h={\x^{-1}}_{[i_2,i_4)}$ we obtain
 $$\x_{[i_2,i_4)}\ \  {\rm and}\ \ \x_{[i_1,i_3)}$$
respectively. So in this case we have four common neighbours of $\e$ and $\g$. 

If $y=e$, then beside the vertices given in (\ref{x=z}) we have two other neighbours of $\g$ of weight $1$: 
 $$
 \x_{[i_1,i_2)}=\x^{-1}_{[i_3,i_4)}\g\ \  {\rm and}\ \ \x_{[i_3,i_4)}=\x^{-1}_{[i_1,i_2)}\g.
 $$ 
 These are the only neigbours of $\g$ of weight $1$ if we additionaly assume that $x\neq x^{-1}$. So under this additional assumption we have again four common neighbours of $\e$ and $\g$.  If $x$ is an element of order $2$, then also 
 $$\x_{[i_1,i_3)}=\x_{[i_2,i_4)}\g\ \  {\rm and}\ \ \x_{[i_2,i_4)}=\x^{-1}_{[i_1,i_3)}\g
 $$
are neighbours of $\g$ of weight $1$ and in this last case there are $6$ common neighbours of $\e$ and $\g$.

Now let $z = x^{-1}$, $o(x)>2$.  If $y=e$, that is $\g=\x_{[i_1,i_2)}\e_{[i_2,i_3)}\x^{-1}_{[i_3,i_4)}$, the only neighbours of $\g$ of weight $1$ are equal $\h\g$, where $\h\in\{\x_{[i_3,i_4)},\ \x_{[i_2,i_4)},\ \x^{-1}_{[i_1,i_2)}, \x^{-1}_{[i_1,i_3)}\}$. This set of neighbours is equal $$\{\x_{[i_1,i_2)},\ \x_{[i_1,i_3)},\ \x^{-1}_{[i_3,i_4)},\ \x^{-1}_{[i_2,i_4)} \}.$$ 
If $x\neq y \neq e$, then multiplying $\g = \x_{[i_1,i_2)}\y_{[i_2,i_3)}\x^{-1}_{[i_3,i_4)}$ by any element from any interval $\pr{k}{l}$ with $k\in \{i_1,i_2,i_3\}$ and $l\in\{i_2,i_3,i_4\}$, $k<l$, we cannot obtain an element of weight $1$. Therefore such $\g$ and $\e$ do not have common neighbours. 

In view of the considered cases we may assume that $x\neq z$ and $x\neq z^{-1}$. If $y=e$, then one can easily notice that the only common neighbours of $\g = \x_{[i_1,i_2)}\e_{[i_2,i_3)}\z_{[i_3,i_4)}$ and $\e$ are $\x_{[i_1,i_2)}$ and $\z_{[i_3,i_4)}$. So assume that $y\neq e$. 
It is easily seen that multyplying $\g$ by any element from the intervals $\pr{i_1}{i_2}$, $\pr{i_2}{i_3}$, $\pr{i_3}{i_4}$ or $\pr{i_1}{i_4}$ we do not get an element of weight $1$. So let us consider the product 
$$\h_{[i_1,i_3)}\g=(\h\x)_{[i_1,i_2)}(\h\y)_{[i_2,i_3)}\z_{[i_3,i_4)}.$$
Since $z\neq e$ and $hx\neq hy$, this is an element of weight $1$ when $hx=1$ and $hy=z$. This gives 
$$ h=x^{-1},\ \ z=x^{-1}y,\ \ \h_{[i_1,i_3)}\g=(\x^{-1}\y)_{[i_2,i_4)}.$$ 
Analogously, the product
$$\h_{[i_2,i_4)}\g=\x_{[i_1,i_2)}(\h\y_{[i_2,i_3)})(\h\z)_{[i_3,i_4)}$$
has weight $1$ when $hy=x$ and $hz=1$, that is when $h=z^{-1}$ and $z^{-1}y=x$. This means that 
$$z=yx^{-1},\ \  \h_{[i_2,i_4)}\g=\x_{[i_1,i_3)}).$$ 
Thus these two products are different neighbours of $\e$ only when 
\begin{equation}\label{xy=yx}
x^{-1}y = z = yx^{-1}, 
\end{equation}
that is, when $x$ and $y$ commute. If only one equation of (\ref{xy=yx}) is satisfied, then $\g$ has exactly one common neighbour with $\e$. If none of the equations is satisfied, $\g$ and $\e$ do not have common neighbours.

\medskip

\noindent (4) If $\vartheta(\g)\geqslant 4$, then by Lemma \ref{weight} the weight of any neighbour of $\g$ is not less than $2$. Hence $V(\e)\cap V(\g)=\emptyset$.
\end{proof} 

\begin{cor}
The sets of common neighbours of $\e$ and $\g$ depending on $\vartheta(\g)$ are presented in the following table:\vspace{4pt}	
\begin{center}
{\tiny\begin{tabular}{||l||c|c|c|c|c|c|c||}
\hline
\multicolumn{8}{||c||}{}\\[-4pt]
\multicolumn{8}{||c||}{$\vartheta(\g)=1$}  \\[4pt]
\hline
 &&&&&&&\\[-4pt]
       &                &              & $i<k$             &  $k<i<l$          & $k<j<l$ & $l<j$ &\\[4pt]
$\h$   & $\x_{[i,k)}$   & $\x_{[l,j)}$ & $\x^{-1}_{[i,l)}$ & $\x^{-1}_{[i,l)}$ & $\x^{-1}_{[k,j)}$ & $\x^{-1}_{[k,j)}$ & $\y_{[k,l)}$\\[4pt]
\hline
$\g$   & \multicolumn{7}{|l||} {$\x_{[k,l)}$}  \\[4pt]
\hline
$\h\g$ & $\x_{[i,l)}$   & $\x_{[k,j)}$  & $\x^{-1}_{[i,k)}$ & $\x_{[k,i)}$ & $\x_{[j,l)}$& $\x^{-1}_{[l,j)}$ & $(\y\x)_{[k,l)}$ \\[4pt]
\hline
\hline
\multicolumn{8}{|c|}{}\\[-4pt]
\multicolumn{8}{|c|}{$\vartheta(\g)=2$}  \\[4pt]
\hline
$\h$   & $\x^{-1}_{[k,l)}$   & $(\y\x^{-1})_{[k,l)}$ & $\y^{-1}_{[l,s)}$ & $(\x\y^{-1})_{[l,s)}$ & $\x^{-1}_{[k,s)}$ & $\y^{-1}_{[k,s)}$ & \\[4pt]
\hline
$\g$   & \multicolumn{7}{|l||} {$\x_{[k,l)}\y_{[l,s)}$}  \\[4pt]
\hline
$\h\g$ & $\y_{[l,s)}$   & $\y_{[k,s)}$  & $\x_{[k,l)}$ & $\x_{[k,s)}$ & $(\x^{-1}\y)_{[l,s)}$ & $(\y^{-1}\x)_{[k,l)}$ & \\[4pt]
\hline
\hline
\multicolumn{8}{|c|}{}  \\[-4pt]
\multicolumn{8}{|c|}{$\vartheta(\g)=3$}  \\[4pt]
\hline
$\h$   & $\x_{[i_1,i_4)}$   & $\x_{[i_2,i_3)}$ & $\x_{[i_3,i_4)}$ & $\x_{[i_1,i_2)}$ & $\x_{[i_2,i_4)}$ & $\x_{[i_1,i_3)}$ & \\[4pt]
\hline
$\g$   & \multicolumn{7}{|l||} {$\x_{[i_1,i_2)}\e_{[i_2,i_3)}\x_{[i_3,i_4)}$,\ \ $o(x)=2$}  \\[4pt]
\hline
$\h\g$ & $\x_{[i_2,i_3)}$   & $\x_{[i_1,i_4)}$  & $\x_{[i_1,i_2)}$ & $\x_{[i_3,i_4)}$ & $\x_{[i_1,i_3)}$ & $\x_{[i_2,i_4)}$ & \\[4pt]
\hline
\hline
$\h$   & $\x^{-1}_{[i_1,i_4)}$   & $\x_{[i_2,i_3)}$ & $\x^{-1}_{[i_3,i_4)}$ & $\x^{-1}_{[i_1,i_2)}$ & \multicolumn{1}{||c|}{$\x^{-1}_{[i_1,i_4)}$} & $(\x\y^{-1})_{[i_2,i_3)}$ & \\[4pt]
\hline
$\g$   & \multicolumn{4}{|l|} {$\x_{[i_1,i_2)}\e_{[i_2,i_3)}\x_{[i_3,i_4)}$,\ \ $o(x)>2$} & \multicolumn{2}{||l|}{$\x_{[i_1,i_2)}\y_{[i_2,i_3)}\x_{[i_3,i_4)}, \ x^2\neq y\neq e$} & \\[4pt]
\hline
$\h\g$ & $\x^{-1}_{[i_2,i_3)}$   & $\x_{[i_1,i_4)}$  & $\x_{[i_1,i_2)}$ & $\x_{[i_3,i_4)}$ & \multicolumn{1}{||c|}{$(\x^{-1}\y)_{[i_2,i_3)}$} & $\x_{[i_1,i_4)}$ & \\[4pt]
\hline
\hline
$\h$   & $\x_{[i_3,i_4)}$   & $\x_{[i_2,i_4)}$ & $\x^{-1}_{[i_1,i_2)}$ & $\x^{-1}_{[i_1,i_3)}$ & \multicolumn{1}{||c|}{$\z^{-1}_{[i_3,i_4)}$} & $\x^{-1}_{[i_1,i_2)}$ & \\[4pt]
\hline
$\g$   & \multicolumn{4}{|l|} {$\x_{[i_1,i_2)}\e_{[i_2,i_3)}\x^{-1}_{[i_3,i_4)}$,\ \ $o(x)>2$} & \multicolumn{2}{||l|}{$\x_{[i_1,i_2)}\e_{[i_2,i_3)}\z_{[i_3,i_4)}, \ z\neq x\neq z^{-1}$} & \\[4pt]
\hline
$\h\g$ & $\x_{[i_1,i_2)}$   & $\x_{[i_1,i_3)}$  & $\x^{-1}_{[i_3,i_4)}$ & $\x^{-1}_{[i_2,i_4)}$ & \multicolumn{1}{||c|}{$\x_{[i_1,i_2)}$} & $\z_{[i_3,i_4)}$ & \\[4pt]
\hline
\hline
$\h$   & $\x^{-1}_{[i_1,i_4)}$   & $\x^{-1}_{[i_2,i_3)}$ & $\x^{-1}_{[i_2,i_4)}$ & $\x^{-1}_{[i_1,i_3)}$ & \multicolumn{3}{||c||}{} \\[4pt]
\hline
$\g$   & \multicolumn{4}{|l|} {$\x_{[i_1,i_2)}\x_{[i_2,i_3)}^2\x_{[i_3,i_4)}$,\ \ $o(x)>2$} & \multicolumn{3}{||l||}{} \\[4pt]
\hline
$\h\g$ & $\x_{[i_2,i_3)}$   & $\x_{[i_1,i_4)}$  & $\x_{[i_1,i_3)}$ & $\x_{[i_2,i_4)}$ & \multicolumn{3}{||c||}{} \\[4pt]
\hline
\hline

$\h$   & \multicolumn{4}{|l|}{$\x^{-1}_{[i_1,i_3)}$}  & \multicolumn{3}{||l||} {$\x\y^{-1}_{[i_2,i_4)}$}  \\[4pt]
\hline
$\g$   & \multicolumn{4}{|l|} {$\x_{[i_1,i_2)}\y_{[i_2,i_3)}(\x^{-1}\y)_{[i_3,i_4)}$,\ \ $xy\neq yx$} & \multicolumn{3}{||l||}{$\x_{[i_1,i_2)}\y_{[i_2,i_3)}(\y\x^{-1})_{[i_3,i_4)}, \ xy\neq yx$}  \\[4pt]
\hline
$\h\g$ & \multicolumn{4}{|l|} {$(\x^{-1}\y)_{[i_2,i_4)}$}   & \multicolumn{3}{||l||} {$\x_{[i_1,i_3)}$}    \\[4pt]
\hline

\end{tabular}}
\end{center}
\end{cor}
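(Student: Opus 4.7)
The plan is to unpack Proposition 3.5 by recording, for every configuration that yields a nonzero intersection $V(\e)\cap V(\g)$, the explicit pair $(\h,\h\g)$ that realises each common neighbour. The cardinalities and the defining equations on $x,y,z$ were already settled in the proof of Proposition 3.5; what remains is bookkeeping.

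First I would split by $\vartheta(\g)$. For $\vartheta(\g)=1$, write $\g=\x_{[k,l)}$ and enumerate all elements $\h_{[i,j)}\in\Ss$ for which $\h_{[i,j)}\g\in\Ss$ by applying Lemma 3.3 directly. Its three alternatives translate into the seven columns of the first block: the cases $h=x$ with $(j=k)$ or $(i=l)$ produce the extensions $\x_{[i,l)}$ and $\x_{[k,j)}$; the cases $h=x^{-1}$ with $i\neq k,j=l$ or $i=k,j\neq l$ split into the four shortening/flipping columns depending on whether $i<k$, $k<i<l$, $k<j<l$ or $l<j$; and the case $h\neq x^{-1}$ with $(i,j)=(k,l)$ gives the single column $\h\g=(\y\x)_{[k,l)}$.

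For $\vartheta(\g)=2$ with $\g=\x_{[k,l)}\y_{[l,s)}$, the useful observation (3.3) in the proof of Proposition 3.5 forces $\{i,j\}\subseteq\{k,l,s\}$, so there are only three intervals to test: $[k,l)$, $[l,s)$ and $[k,s)$. For each of these the condition $\h_{[i,j)}\g\in\Ss$ reduces to a small system on $h$; solving yields the two values of $h$ per interval listed in the second block, giving the six neighbours. For $\vartheta(\g)=3$ with $\g=\x_{[i_1,i_2)}\y_{[i_2,i_3)}\z_{[i_3,i_4)}$, the same observation confines $\{i,j\}\subseteq\{i_1,i_2,i_3,i_4\}$, so one examines the six admissible intervals $[i_a,i_b)$. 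I would handle the branches exactly as in the proof of Proposition 3.5: first $x=z$ (subdivided by $y=e$ versus $y=x^2$ versus generic, and by $o(x)=2$), then $z=x^{-1}$, then the generic $x\neq z^{\pm 1}$ split into $y=e$ and $xy=yx$ versus non-commuting. Each surviving $\h_{[i,j)}$ and the corresponding $\h\g$ is then read off and placed in the appropriate row of the third block.

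I expect no real obstacle beyond the sheer bulk of cases; the only point requiring care is verifying that different choices of $\h_{[i,j)}$ listed in the same row genuinely produce distinct common neighbours $\h\g$ (so that the row counts in Proposition 3.5 are matched), which follows because each entry $\h\g$ is displayed in reduced weight-decomposition form and these are manifestly pairwise different. Once every row of the table is checked against Lemma 3.3 and Proposition 3.5, the corollary is established.
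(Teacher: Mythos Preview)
Your proposal is correct and matches the paper's approach: the corollary is stated without a separate proof, being a direct tabulation of the explicit elements $\h$ and $\h\g$ that were identified case by case in the proof of Proposition~3.5 (together with Lemma~3.3). The bookkeeping you outline---splitting by $\vartheta(\g)$, using observation~(3.3) to restrict the admissible intervals, and then reading off each surviving pair---is exactly how the table is produced.
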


Recall that a non-empty $k$-regular graph $X$ on $n$ vertices is called edge regular if there exists a constant $a$ such that every pair of adjacent vertices has precisely $a$ common neighbours. Then we say that $X$ is edge regular with parameters $(n,k,a)$. Furthermore, the graph  $X$ is said to be strongly regular with parameters $(n, k, a, c)$ if it is edge regular with parameters $(n,k,a)$ and every pair of distinct nonadjacent vertices has $c$ common neighbours. From Proposition \ref{neighbours}, it follows immediately that $\G_2(G)$ is strongly regular and for  $m\geqslant 3$ the graph $\G_m(G)$ is edge regular.
Indeed,  since the right transfer $T_\g\colon G^m\to G^m$, $\x\mapsto \x\g$ is an automorphism of  $\G_m(G)$, the number of common neighbours of adjacent  vertices $\g$ and $\h$  can be computed as follows 
$$
|V(\g)\cap V(\h)|=|V(\e)\cap V(\h\g^{-1})|=|G|+2m-4.
$$ 
Consequently $\G_m(G)$ is edge regular. Since the weight of any element of $\G_2(G)$ does not exceed $2$,  only points 1 and 2 of Proposition \ref{neighbours} apply to this graph. 
Furthermore, a simple calculation using formulas from Section 10.2 in \cite{GR} shows that

\begin{cor}\label{G2} For a given non-trivial group $G$ and integer $m>1$
\begin{enumerate}
\item the graph $\G_m(G)$ is edge regular with parameters 
$$
(|G|^m,\binom{m+1}{2}(|G| -1),|G|+2m-4);
$$
\item the graph  $\G_2(G)$ is strongly regular with parameters $$(|G|^2,3(|G|-1) ,|G|,6).$$
The eigenvalues of $\G_2(G)$ are $3(|G|-1)$, $|G|-3$, and $-3$ with multiplicities  equal to $1$, $3(|G|-1)$ and
$|G|^2-3|G|+2$ respectively.
\end{enumerate}

\end{cor}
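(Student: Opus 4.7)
The plan is to derive both parts of the corollary by combining Proposition~\ref{neighbours} with the elementary fact that right translation $T_\g\colon \x\mapsto \x\g$ is a graph automorphism of $\G_m(G)$ (since the connection set $\Ss$ acts on the left).

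For part (1), the number of vertices is trivially $|G|^m$ and the regularity degree is $|\Ss|=\binom{m+1}{2}(|G|-1)$, since the intervals $\pr{k}{l}$ are pairwise disjoint and each contains exactly $|G|-1$ elements. To verify edge-regularity, I take any adjacent pair $\g\sim \h$. Using the right-translation automorphism $T_{\g^{-1}}$, the problem of counting common neighbours reduces to counting $|V(\e)\cap V(\h\g^{-1})|$, where $\h\g^{-1}\in\Ss$, i.e. $\vartheta(\h\g^{-1})=1$. Proposition~\ref{neighbours}(1) then yields precisely $|G|+2m-4$, independently of the chosen edge.

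For part (2), I observe that when $m=2$, every non-identity element $\g\in G^2$ has weight $\vartheta(\g)\in\{1,2\}$: weight $1$ elements are exactly the neighbours of $\e$, while weight $2$ elements are the non-neighbours of $\e$ other than $\e$ itself. Applying the same translation trick, any pair of distinct non-adjacent vertices $\g,\h$ corresponds to $\h\g^{-1}$ of weight $2$, so Proposition~\ref{neighbours}(2) gives exactly $6$ common neighbours. Combined with part (1) specialised to $m=2$, this proves strong regularity with parameters $(|G|^2,\,3(|G|-1),\,|G|,\,6)$.

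For the spectrum, I invoke the standard formulas for a strongly regular graph $(n,k,a,c)$: besides the Perron eigenvalue $k=3(|G|-1)$ with multiplicity $1$, the other two eigenvalues $\theta,\tau$ are the roots of $x^2-(a-c)x-(k-c)=0$, i.e. $x^2-(|G|-6)x-(3|G|-9)=0$. The discriminant simplifies to $|G|^2$, yielding $\theta=|G|-3$ and $\tau=-3$. Their multiplicities $m_\theta,m_\tau$ are then determined by the two linear relations $m_\theta+m_\tau=|G|^2-1$ and $k+m_\theta\theta+m_\tau\tau=0$ (trace of the adjacency matrix). Solving gives $m_\theta=3(|G|-1)$ and $m_\tau=|G|^2-3|G|+2$, matching the statement. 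The only step requiring any genuine input beyond bookkeeping is the reduction via Proposition~\ref{neighbours}; the eigenvalue computation is a direct quadratic calculation and poses no obstacle.
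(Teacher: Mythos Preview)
Your proof is correct and follows essentially the same approach as the paper: reduce to the vertex $\e$ via right translations and invoke Proposition~\ref{neighbours}(1)--(2), then appeal to the standard spectral formulas for strongly regular graphs. The paper simply cites Section~10.2 of \cite{GR} for the eigenvalue step, whereas you have written out the quadratic and the multiplicity calculation explicitly, which is a welcome addition but not a different method.
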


In light of Corollary \ref{G2} the spectrum of $\G_2(G)$ depends only on the size of the group $G$. It could suggest that $\G_m(G)$ does not carry to much information about $G$. However, it is not true. In next sections we show that for any $m\geqslant 2$ the graph $\G_m(G)$ fully determines $G$ in the sense that  for  given non isomorphic groups $G$ and $H$ the graphs $\G_m(G)$ and $\G_m(H)$ are also non isomorphic.

\bigskip

%----------------------------------------------------------------------------------
%----------------------------------------------------------------------------------
%----------------------------------------------------------------------------------

\section{Maximum cliques}

For $\g\in G^m$ by $\V_m(\g)=\V_m(\g,G)$ we denote a subgraph of $\G_m(G)$ whose vertex set is equal $V(\g)$. For an element $x\in G^{\times}$ by $\I_m(x,G)$ ($\I_m(x)$ for short) we denote a subgraph of $\V_m(\e,G)$ whose vertex set is equal 
$$I_m(x)=\{\x_{[k,l)}:\ 1\leqslant k < l \leqslant m+1\}\cup \{\x^{-1}_{[k,l)}:\ 1\leqslant k < l \leqslant m+1\}.$$ 

%----------------------------------------------------------------------------------
%----------------------------------------------------------------------------------

\begin{lem}\label{I(x)}
For a given non-trivial group $G$, $x\in G^{\times}$ and integer $m>1$:
\begin{enumerate} 
	\item If $x$ has order $2$, then $|I_m(x)|=m(m+1)/2$; if $x$ has order bigger than $2$, then $|I_m(x)|=m(m+1)$;
	\item $\I_m(x)$ is a $d$-regular graph with $d=2m-2$ when $x$ has order $2$ and $d=2m-1$ when $x$ has order bigger than $2$. 
\end{enumerate}	
\end{lem}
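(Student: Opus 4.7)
The plan is to handle part~(1) by an immediate counting argument and part~(2) by classifying the neighbours of a fixed vertex using Lemma~\ref{SS_in_S}.

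For part~(1), distinct pairs $(k,l)$ with $1\leqslant k<l\leqslant m+1$ yield distinct elements $\x_{[k,l)}$, so the set $\{\x_{[k,l)}\}$ has $\binom{m+1}{2}=m(m+1)/2$ elements. If $o(x)=2$ then $x=x^{-1}$, so the two sets in the defining union of $I_m(x)$ coincide; if $o(x)>2$, the two sets are disjoint, since $\x_{[k,l)}=\x^{-1}_{[k',l')}$ would force the supports $[k,l)=[k',l')$ and then $x=x^{-1}$, a contradiction.

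For part~(2), I would fix a vertex $\g=\x_{[k,l)}$ and count its neighbours in $\I_m(x)$. Each such neighbour has the form $\h_{[i,j)}\g$ with $h\in G^\times$, and must lie in $\Ss\cap I_m(x)$. The three conditions of Lemma~\ref{SS_in_S} partition the admissible triples $(h,i,j)$. The first condition ($h\neq x^{-1}$, $(i,j)=(k,l)$) produces $(hx)_{[k,l)}$; this lies in $I_m(x)$ exactly when $hx\in\{x,x^{-1}\}$, forcing $h=x^{-2}$, which is admissible iff $o(x)>2$, and contributes a single neighbour in that case. The second condition ($h=x$, $j=k$ or $i=l$) produces $\x_{[i,l)}$ or $\x_{[k,j)}$, contributing $(k-1)+(m+1-l)$ neighbours. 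The third condition ($h=x^{-1}$, together with $i\neq k,\ j=l$ or $i=k,\ j\neq l$) produces an element of the form $\x_{[a,b)}$ or $\x^{-1}_{[a,b)}$ by the cancellation of $x$ and $x^{-1}$ on the overlap of $[i,j)$ with $[k,l)$; this contributes $(l-2)+(m-k)$ neighbours.

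Summing the second and third contributions gives $(k-1)+(m+1-l)+(l-2)+(m-k)=2m-2$, independently of $(k,l)$, and the first case adds $1$ precisely when $o(x)>2$. The same argument applied with $x^{-1}$ in place of $x$ yields the analogous count for vertices $\x^{-1}_{[k,l)}$, so $\I_m(x)$ is regular of the claimed degree. The main technical point is the verification in the third case that each admissible product $\h_{[i,j)}\g$ indeed lies in $I_m(x)$; this will follow by separating into the sub-subcases $i<k$, $k<i<l$, $k<j<l$, $l<j$ and observing in each case the precise interval surviving after cancellation.
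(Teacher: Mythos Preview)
Your proposal is correct and follows essentially the same approach as the paper: both arguments enumerate the neighbours of $\x_{[k,l)}$ inside $I_m(x)$ via the three cases of Lemma~\ref{SS_in_S}, obtaining the same counts $(k-1)+(m+1-l)$ and $(l-2)+(m-k)$ summing to $2m-2$, together with the extra neighbour $\x^{-1}_{[k,l)}$ when $o(x)>2$. The only cosmetic difference is that the paper organizes the count by the shape of the resulting neighbour (whether it is $\x_{[i,j)}$ or $\x^{-1}_{[i,j)}$), whereas you organize it by the condition on the multiplier $\h_{[i,j)}$; the two bookkeepings are equivalent.
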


%----------------------------------------------------------------------------------

\begin{proof}
	The first assertion follows strightforward from the definifion of $I_m(x)$. The proof of the second one follows from the analysis done in the beginning of the proof of Proposition \ref{neighbours}. In fact, in the graph $\I_m(x,G)$ for a fixed interval $\pr{k}{l}$ a vertex $\x_{[k,l)}$ is adjacent to: 
	\begin{enumerate}
		\item[$\bullet$] $m-k+l-2$ vertices of the form $\x_{[i,j)}$ (when $k=i$, $k<j$ and $l\neq j$ or $l=j$, $i<l$ and $k\neq i$),
		\item[$\bullet$] $m+k-l$ vertices of the form $\x^{-1}_{[i,j)}$ (when $k=j+1$ and $i<k$ or $l=i$ and $l<j$),
		\item[$\bullet$] $\x^{-1}_{[k,l)}$ (this connection is excluded when $x$ is an element of order $2$).\vspace{-20pt}
	\end{enumerate}
\end{proof}

%----------------------------------------------------------------------------------
%----------------------------------------------------------------------------------

As it is seen the structure of $\I_m(x, G)$ depends only on $m$ and the fact whether $x$ has order $2$ or not. It does not depend on the structure of $G$. See {\bf Fig.\,1} and {\bf Fig.\,2} presenting $\mathscr{I}_m(x)$ for $m=3$ and $m=4$. 

%----------------------------------------------------------------------------------

\begin{center}
\begin{tikzpicture}[scale=0.35] % CENT
 \clip (-9,-10.5) rectangle (9,9);
%% some definitions 
\def\R{2.2} % sphere radius
\def\r{2}

%cosinusy kątów 18,54,90,126...
\def\ca{0.866}
\def\cb{0}
\def\cc{-0.866}
\def\cd{-0.866}
\def\ce{0}
\def\cf{0.866}

%sinusy kątów 18,54,90,126...
\def\sa{0.5}
\def\sb{1}
\def\sc{0.5}
\def\sd{-0.5}
\def\se{-1}
\def\sf{-0.5}

% Parametr odległości wierzchołków zewnętrznych od początku układu
\def\a{3}
%Wierzchołki zewnętrzne A?=(\a*\R*cos(),\a*\R*sin) 

\def\Aa{(\a*\R*\ca,\a*\R*\sa)}
\def\Ab{(\a*\R*\cb,\a*\R*\sb)}
\def\Ac{(\a*\R*\cc,\a*\R*\sc)}
\def\Ad{(\a*\R*\cd,\a*\R*\sd)}
\def\Ae{(\a*\R*\ce,\a*\R*\se)}
\def\Af{(\a*\R*\cf,\a*\R*\sf)}

% Parametry odległości wierzchołków wewnętrznych od początku układu
\def\b{1.2}

%Współrzędne wierzchołki zewnętrzne A?=(\a*\R*cos(),\a*\R*sin) 

\def\Ba{(\b*\R*\ca,\b*\R*\sa)}
\def\Bb{(\b*\R*\cb,\b*\R*\sb)}
\def\Bc{(\b*\R*\cc,\b*\R*\sc)}
\def\Bd{(\b*\R*\cd,\b*\R*\sd)}
\def\Be{(\b*\R*\ce,\b*\R*\se)}
\def\Bf{(\b*\R*\cf,\b*\R*\sf)}

%Wielość wierzchołków
\def\v{4.3pt}

%Pary elementów wzajemnie odwrotnych

\draw[very thick] [color=red]  
\Aa -- \Ba
\Ab -- \Bb
\Ac -- \Bc
\Ad -- \Bd
\Ae -- \Be
\Af -- \Bf;

%Wierzchołki zewnętrzne

\filldraw[black, thick] 
\Aa circle(\v)
\Ab circle(\v)
\Ac circle(\v)
\Ad circle(\v)
\Ae circle(\v)
\Af circle(\v);

%Oznaczenia wierzchołków zewnętrznych:

\node[right] at  (\a*\R*\ca + 0.15,\a*\R*\sa)   {\footnotesize ${\bf{x}_{[2,4)}}$};
\node[above] at  (\a*\R*\cb,\a*\R*\sb + 0.1)    {\footnotesize ${\bf{x}^{-1}_{[1,2)}}$};
\node[left]  at  (\a*\R*\cc - 0.15,\a*\R*\sc)   {\footnotesize ${\bf{x}_{[2,3)}}$};
\node[left]  at  (\a*\R*\cd - 0.15,\a*\R*\sd)   {\footnotesize ${\bf{x}_{[1,3)}}$};
\node[below] at  (\a*\R*\ce,\a*\R*\se - 0.15)   {\footnotesize ${\bf{x}_{[3,4)}}$};
\node[right] at  (\a*\R*\cf + 0.15,\a*\R*\sf)   {\footnotesize ${\bf{x}_{[1,4)}}$};

%Wierzchołki zewnętrzne

\filldraw[black, thick] 
\Ba circle(\v)
\Bb circle(\v)
\Bc circle(\v)
\Bd circle(\v)
\Be circle(\v)
\Bf circle(\v);

%Cykl Mobiusa
\draw[thick] \Aa -- \Ab -- \Ac -- \Ad -- \Be -- \Bf --\Ba -- \Bb -- \Bc -- \Bd -- \Ae -- 
\Af -- \Aa;
%Pozostałe cykle 
\draw[thin]   \Aa -- \Ac -- \Be -- \Ba -- \Bc -- \Ae -- \Aa 
                    \Ab -- \Bd -- \Bf -- \Ab 
                    \Ad -- \Af -- \Bb -- \Ad; 

%Podpis
\node at (0,-9.5) {\footnotesize {\rm  The graph}\  $\mathscr{I}_3(x),\  \ o(x)>2$};

\end{tikzpicture} 
\hskip1cm
\begin{tikzpicture}[scale=0.35] % CENT
 \clip (-9,-10.5) rectangle (9,9);
%% some definitions 
\def\R{2.2} % sphere radius
\def\r{2}

%cosinusy kątów 18,54,90,126...
\def\ca{0.866}
\def\cb{0}
\def\cc{-0.866}
\def\cd{-0.866}
\def\ce{0}
\def\cf{0.866}

%sinusy kątów 18,54,90,126...
\def\sa{0.5}
\def\sb{1}
\def\sc{0.5}
\def\sd{-0.5}
\def\se{-1}
\def\sf{-0.5}

% Parametr odległości wierzchołków zewnętrznych od początku układu
\def\a{3}
%Wierzchołki zewnętrzne A?=(\a*\R*cos(),\a*\R*sin) 

\def\Aa{(\a*\R*\ca,\a*\R*\sa)}
\def\Ab{(\a*\R*\cb,\a*\R*\sb)}
\def\Ac{(\a*\R*\cc,\a*\R*\sc)}
\def\Ad{(\a*\R*\cd,\a*\R*\sd)}
\def\Ae{(\a*\R*\ce,\a*\R*\se)}
\def\Af{(\a*\R*\cf,\a*\R*\sf)}

% Parametry odległości wierzchołków wewnętrznych od początku układu
\def\b{3}

%Współrzędne wierzchołki zewnętrzne A?=(\a*\R*cos(),\a*\R*sin) 

\def\Ba{(\b*\R*\ca,\b*\R*\sa)}
\def\Bb{(\b*\R*\cb,\b*\R*\sb)}
\def\Bc{(\b*\R*\cc,\b*\R*\sc)}
\def\Bd{(\b*\R*\cd,\b*\R*\sd)}
\def\Be{(\b*\R*\ce,\b*\R*\se)}
\def\Bf{(\b*\R*\cf,\b*\R*\sf)}

%Wielość wierzchołków
\def\v{4.3pt}

%Pary elementów wzajemnie odwrotnych

\draw[thick] [color=red]  
\Aa -- \Ba
\Ab -- \Bb
\Ac -- \Bc
\Ad -- \Bd
\Ae -- \Be
\Af -- \Bf;

%Wierzchołki zewnętrzne

\filldraw[black, very thick] 
\Aa circle(\v)
\Ab circle(\v)
\Ac circle(\v)
\Ad circle(\v)
\Ae circle(\v)
\Af circle(\v);

%Oznaczenia wierzchołków zewnętrznych:

\node[right] at  (\a*\R*\ca + 0.15,\a*\R*\sa)   {\footnotesize ${\bf{x}_{[2,4)}}$};
\node[above] at  (\a*\R*\cb,\a*\R*\sb + 0.1)    {\footnotesize ${\bf{x}_{[1,2)}}$};
\node[left]  at  (\a*\R*\cc - 0.15,\a*\R*\sc)   {\footnotesize ${\bf{x}_{[2,3)}}$};
\node[left]  at  (\a*\R*\cd - 0.15,\a*\R*\sd)   {\footnotesize ${\bf{x}_{[1,3)}}$};
\node[below] at  (\a*\R*\ce,\a*\R*\se - 0.15)   {\footnotesize ${\bf{x}_{[3,4)}}$};
\node[right] at  (\a*\R*\cf + 0.15,\a*\R*\sf)   {\footnotesize ${\bf{x}_{[1,4)}}$};

%Wierzchołki zewnętrzne

\filldraw[black, very thick] 
\Ba circle(\v)
\Bb circle(\v)
\Bc circle(\v)
\Bd circle(\v)
\Be circle(\v)
\Bf circle(\v);

%Cykl Mobiusa
\draw[thick] \Aa -- \Ab -- \Ac -- \Ad -- \Be -- \Bf --\Ba -- \Bb -- \Bc -- \Bd -- \Ae -- 
\Af -- \Aa;
%Pozostałe cykle 
\draw[thin]   \Aa -- \Ac -- \Be -- \Ba -- \Bc -- \Ae -- \Aa 
                    \Ab -- \Bd -- \Bf -- \Ab 
                    \Ad -- \Af -- \Bb -- \Ad; 

%Podpis
\node at (0,-9.5) {\footnotesize {\rm  The graph}\  $\mathscr{I}_3(x),\  \ o(x)=2$};
\end{tikzpicture}

\par
\smallskip

{\bf Fig.\,1.}
\end{center}
\smallskip

\begin{center}
\begin{tikzpicture}[scale=.4] % CENT
 \clip (-9,-10.5) rectangle (9,8);
%% some definitions 
\def\R{2.2} % sphere radius
\def\r{2}

%cosinusy kątów 18,54,90,126...
\def\ca{0.951}
\def\cb{0.588}
\def\cc{0}
\def\cd{-0.588}
\def\ce{-0.951}
\def\cf{-0.951}
\def\cg{-0.588}
\def\ch{0}
\def\ci{0.588}
\def\cj{0.951}

%sinusy kątów 18,54,90,126...
\def\sa{0.309}
\def\sb{0.809}
\def\sc{1}
\def\sd{0.809}
\def\se{0.309}
\def\sf{-0.309}
\def\sg{-0.809}
\def\sh{-1}
\def\si{-0.809}
\def\sj{-0.309}

% Parametr odległości wierzchołków zewnętrznych od początku układu
\def\a{3}
%Wierzchołki zewnętrzne A?=(\a*\R*cos(),\a*\R*sin) 

\def\Aa{(\a*\R*\ca,\a*\R*\sa)}
\def\Ab{(\a*\R*\cb,\a*\R*\sb)}
\def\Ac{(\a*\R*\cc,\a*\R*\sc)}
\def\Ad{(\a*\R*\cd,\a*\R*\sd)}
\def\Ae{(\a*\R*\ce,\a*\R*\se)}
\def\Af{(\a*\R*\cf,\a*\R*\sf)}
\def\Ag{(\a*\R*\cg,\a*\R*\sg)}
\def\Ah{(\a*\R*\ch,\a*\R*\sh)}
\def\Ai{(\a*\R*\ci,\a*\R*\si)}
\def\Aj{(\a*\R*\cj,\a*\R*\sj)}

% Parametry odległości wierzchołków wewnętrznych od początku układu
\def\b{.55}
\def\c{2.25}
%Współrzędne wierzchołki zewnętrzne A?=(\a*\R*cos(),\a*\R*sin) 

\def\Ba{(\b*\R*\ca,\b*\R*\sa)}
\def\Bb{(\c*\R*\cb,\c*\R*\sb)}
\def\Bc{(\b*\R*\cc,\b*\R*\sc)}
\def\Bd{(\c*\R*\cd,\c*\R*\sd)}
\def\Be{(\b*\R*\ce,\b*\R*\se)}
\def\Bf{(\c*\R*\cf,\c*\R*\sf)}
\def\Bg{(\b*\R*\cg,\b*\R*\sg)}
\def\Bh{(\c*\R*\ch,\c*\R*\sh)}
\def\Bi{(\b*\R*\ci,\b*\R*\si)}
\def\Bj{(\c*\R*\cj,\c*\R*\sj)}

%Wielość wierzchołków
\def\v{4pt}

%Pary elementów wzajemnie odwrotnych

\draw[thick] [color=red]  
\Aa -- \Ba
\Ab -- \Bb
\Ac -- \Bc
\Ad -- \Bd
\Ae -- \Be
\Af -- \Bf
\Ag -- \Bg
\Ah -- \Bh
\Ai -- \Bi
\Aj -- \Bj;

%Wierzchołki zewnętrzne

\filldraw[black, thick] 
\Aa circle(\v)
\Ab circle(\v)
\Ac circle(\v)
\Ad circle(\v)
\Ae circle(\v)
\Af circle(\v)
\Ag circle(\v)
\Ah circle(\v)
\Ai circle(\v)
\Aj circle(\v);

%Oznaczenia wierzchołków zewnętrznych:

\node[right] at       (\a*\R*\ca + 0.15,\a*\R*\sa)        {\footnotesize ${\bf{x}^{-1}_{[2,5)}}$};
\node[above right] at (\a*\R*\cb - 0.10,\a*\R*\sb + 0.1)  {\footnotesize ${\bf{x}_{[1,2)}}$};
\node[above]  at      (\a*\R*\cc,\a*\R*\sc + 0.15)        {\footnotesize ${\bf{x}_{[1,3)}}$};
\node[above left] at  (\a*\R*\cd + 0.1,\a*\R*\sd + 0.1)   {\footnotesize ${\bf{x}_{[2,3)}}$};
\node[left] at        (\a*\R*\ce - 0.1,\a*\R*\se)         {\footnotesize ${\bf{x}_{[2,4)}}$};
\node[left] at        (\a*\R*\cf - 0.1,\a*\R*\sf)         {\footnotesize ${\bf{x}_{[3,4)}}$};
\node[below left] at  (\a*\R*\cg + 0.1,\a*\R*\sg - 0.1)   {\footnotesize ${\bf{x}_{[3,5)}}$};
\node[below] at       (\a*\R*\ch,\a*\R*\sh - 0.15)        {\footnotesize ${\bf{x}_{[4,5)}}$};
\node[below right] at (\a*\R*\ci - 0.2,\a*\R*\si - 0.1)   {\footnotesize ${\bf{x}^{-1}_{[1,4)}}$};
\node[right] at       (\a*\R*\cj + 0.15,\a*\R*\sj)        {\footnotesize ${\bf{x}^{-1}_{[1,5)}}$};

%Wierzchołki zewnętrzne

\filldraw[black, thick] 
\Ba circle(\v)
\Bb circle(\v)
\Bc circle(\v)
\Bd circle(\v)
\Be circle(\v)
\Bf circle(\v)
\Bg circle(\v)
\Bh circle(\v)
\Bi circle(\v)
\Bj circle(\v);

%Cykl zewnętrzny
\draw[thick] \Aa -- \Ab -- \Ac -- \Ad -- \Ae -- \Af --\Ag -- \Ah -- \Ai -- \Aj -- \Aa;
%Cykl wewnętrzny
\draw[thick] \Ba -- \Bb -- \Bc -- \Bd -- \Be -- \Bf --\Bg -- \Bh -- \Bi -- \Bj -- \Ba;
%Pozostałe cykle 
\draw[thin]   \Aa -- \Bd -- \Ag -- \Bj -- \Ac -- \Bf -- \Ai -- \Bb -- \Ae -- \Bh -- \Aa 
                    \Aa -- \Be -- \Ai -- \Bc -- \Ag -- \Ba -- \Ae -- \Bi -- \Ac -- \Bg -- \Aa
                    \Ab -- \Bd -- \Af -- \Bh -- \Aj -- \Bb -- \Ad -- \Bf -- \Ah -- \Bj -- \Ab
                    \Ab -- \Be -- \Ah -- \Ba -- \Ad -- \Bg -- \Aj -- \Bc -- \Af -- \Bi -- \Ab; 

%Podpis
\node at (0,-9.5) {\footnotesize {\rm  The graph}\  $\mathscr{I}_4(x),\  \ o(x)>2$};

\end{tikzpicture} \hskip1cm
\begin{tikzpicture}[scale=.4] % CENT
 \clip (-9,-10.5) rectangle (9,8);
%% some definitions 
\def\R{2.2} % sphere radius
\def\r{2}

%cosinusy kątów 18,54,90,126...
\def\ca{0.951}
\def\cb{0.588}
\def\cc{0}
\def\cd{-0.588}
\def\ce{-0.951}
\def\cf{-0.951}
\def\cg{-0.588}
\def\ch{0}
\def\ci{0.588}
\def\cj{0.951}

%sinusy kątów 18,54,90,126...
\def\sa{0.309}
\def\sb{0.809}
\def\sc{1}
\def\sd{0.809}
\def\se{0.309}
\def\sf{-0.309}
\def\sg{-0.809}
\def\sh{-1}
\def\si{-0.809}
\def\sj{-0.309}

% Parametr odległości wierzchołków zewnętrznych od początku układu
\def\a{3}
%Wierzchołki zewnętrzne A?=(\a*\R*cos(),\a*\R*sin) 

\def\Aa{(\a*\R*\ca,\a*\R*\sa)}
\def\Ab{(\a*\R*\cb,\a*\R*\sb)}
\def\Ac{(\a*\R*\cc,\a*\R*\sc)}
\def\Ad{(\a*\R*\cd,\a*\R*\sd)}
\def\Ae{(\a*\R*\ce,\a*\R*\se)}
\def\Af{(\a*\R*\cf,\a*\R*\sf)}
\def\Ag{(\a*\R*\cg,\a*\R*\sg)}
\def\Ah{(\a*\R*\ch,\a*\R*\sh)}
\def\Ai{(\a*\R*\ci,\a*\R*\si)}
\def\Aj{(\a*\R*\cj,\a*\R*\sj)}

% Parametry odległości wierzchołków wewnętrznych od początku układu
\def\b{3}
\def\c{3}
%Współrzędne wierzchołki zewnętrzne A?=(\a*\R*cos(),\a*\R*sin) 

\def\Ba{(\b*\R*\ca,\b*\R*\sa)}
\def\Bb{(\c*\R*\cb,\c*\R*\sb)}
\def\Bc{(\b*\R*\cc,\b*\R*\sc)}
\def\Bd{(\c*\R*\cd,\c*\R*\sd)}
\def\Be{(\b*\R*\ce,\b*\R*\se)}
\def\Bf{(\c*\R*\cf,\c*\R*\sf)}
\def\Bg{(\b*\R*\cg,\b*\R*\sg)}
\def\Bh{(\c*\R*\ch,\c*\R*\sh)}
\def\Bi{(\b*\R*\ci,\b*\R*\si)}
\def\Bj{(\c*\R*\cj,\c*\R*\sj)}

%Wielość wierzchołków
\def\v{4pt}

%Wierzchołki zewnętrzne

\filldraw[black, very thick] 
\Aa circle(\v)
\Ab circle(\v)
\Ac circle(\v)
\Ad circle(\v)
\Ae circle(\v)
\Af circle(\v)
\Ag circle(\v)
\Ah circle(\v)
\Ai circle(\v)
\Aj circle(\v);

%Oznaczenia wierzchołków zewnętrznych:

\node[right] at       (\a*\R*\ca + 0.15,\a*\R*\sa)        {\footnotesize ${\bf{x}_{[2,5)}}$};
\node[above right] at (\a*\R*\cb - 0.10,\a*\R*\sb + 0.1)  {\footnotesize ${\bf{x}_{[1,2)}}$};
\node[above]  at      (\a*\R*\cc,\a*\R*\sc + 0.15)        {\footnotesize ${\bf{x}_{[1,3)}}$};
\node[above left] at  (\a*\R*\cd + 0.1,\a*\R*\sd + 0.1)   {\footnotesize ${\bf{x}_{[2,3)}}$};
\node[left] at        (\a*\R*\ce - 0.1,\a*\R*\se)         {\footnotesize ${\bf{x}_{[2,4)}}$};
\node[left] at        (\a*\R*\cf - 0.1,\a*\R*\sf)         {\footnotesize ${\bf{x}_{[3,4)}}$};
\node[below left] at  (\a*\R*\cg + 0.1,\a*\R*\sg - 0.1)   {\footnotesize ${\bf{x}_{[3,5)}}$};
\node[below] at       (\a*\R*\ch,\a*\R*\sh - 0.15)        {\footnotesize ${\bf{x}_{[4,5)}}$};
\node[below right] at (\a*\R*\ci - 0.2,\a*\R*\si - 0.1)   {\footnotesize ${\bf{x}_{[1,4)}}$};
\node[right] at       (\a*\R*\cj + 0.15,\a*\R*\sj)        {\footnotesize ${\bf{x}_{[1,5)}}$};

%Wierzchołki zewnętrzne

\filldraw[black, very thick] 
\Ba circle(\v)
\Bb circle(\v)
\Bc circle(\v)
\Bd circle(\v)
\Be circle(\v)
\Bf circle(\v)
\Bg circle(\v)
\Bh circle(\v)
\Bi circle(\v)
\Bj circle(\v);

%Cykl zewnętrzny
\draw[thick] \Aa -- \Ab -- \Ac -- \Ad -- \Ae -- \Af --\Ag -- \Ah -- \Ai -- \Aj -- \Aa;
%Cykl wewnętrzny
\draw[thick] \Ba -- \Bb -- \Bc -- \Bd -- \Be -- \Bf --\Bg -- \Bh -- \Bi -- \Bj -- \Ba;
%Pozostałe cykle 
\draw[thin]   \Aa -- \Bd -- \Ag -- \Bj -- \Ac -- \Bf -- \Ai -- \Bb -- \Ae -- \Bh -- \Aa 
                    \Aa -- \Be -- \Ai -- \Bc -- \Ag -- \Ba -- \Ae -- \Bi -- \Ac -- \Bg -- \Aa
                    \Ab -- \Bd -- \Af -- \Bh -- \Aj -- \Bb -- \Ad -- \Bf -- \Ah -- \Bj -- \Ab
                    \Ab -- \Be -- \Ah -- \Ba -- \Ad -- \Bg -- \Aj -- \Bc -- \Af -- \Bi -- \Ab; 
%Pary elementów wzajemnie odwrotnych

\draw[thick] [color=red]  
\Aa -- \Ba
\Ab -- \Bb
\Ac -- \Bc
\Ad -- \Bd
\Ae -- \Be
\Af -- \Bf
\Ag -- \Bg
\Ah -- \Bh
\Ai -- \Bi
\Aj -- \Bj;

%Podpis
\node at (0,-9.5) {\footnotesize {\rm  The graph}\  $\mathscr{I}_4(x),\  \ o(x)=2$};

\end{tikzpicture}

\smallskip

{\bf Fig.\,2.}\footnote{The red edges connect elements with their inverses}
\end{center}

%----------------------------------------------------------------------------------
%----------------------------------------------------------------------------------

\begin{lem} 
	Let $m>1$ be a fixed integer. For non-trivial groups $G$ and $H$ and elements $x\in G^{\times}$ and $y\in H^{\times}$
	$$\I_m(x,G)\simeq \I_m(y,H)\ \ {\rm if\ and\ only\ if}\ \ o(x)=2=o(y)\ \ {\rm or}\ \ o(x)\neq 2\neq o(y).$$
\end{lem}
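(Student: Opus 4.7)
The plan is to handle the two implications separately, extracting everything from the size/regularity data in the preceding Lemma~\ref{I(x)} and from the adjacency description of $\Ss$ in Lemma~\ref{SS_in_S}.

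For the forward direction, if $\I_m(x,G)\simeq \I_m(y,H)$ then the graphs have the same number of vertices (and the same common degree). By part (1) of Lemma~\ref{I(x)}, $|I_m(x)|$ equals $m(m+1)/2$ or $m(m+1)$ according to whether $o(x)=2$ or $o(x)>2$, and for $m>1$ these two values are distinct. Hence $x$ and $y$ fall into the same case. (The regularities $2m-2$ versus $2m-1$ from part (2) give a second, independent witness.)

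For the backward direction I would exhibit an explicit isomorphism. Define $\varphi\colon I_m(x)\to I_m(y)$ by $\x_{[k,l)}\mapsto \y_{[k,l)}$ in the order-$2$ case, and by $\x_{[k,l)}\mapsto \y_{[k,l)}$, $\x^{-1}_{[k,l)}\mapsto \y^{-1}_{[k,l)}$ in the order-$>2$ case. That $\varphi$ is a bijection follows from Lemma~\ref{I(x)}(1) applied on both sides, since in each case the set $I_m(\cdot)$ is naturally indexed by the pairs $(k,l)$ (with a choice of sign when the order exceeds $2$), independently of the ambient group. To check that $\varphi$ preserves adjacency, note that two elements of $I_m(x,G)$ are adjacent in $\G_m(G)$ precisely when one is of the form $\h_{[p,q)}\g_{[k,l)}$ with $\h_{[p,q)}\in\Ss$. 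Restricting to vertices of $I_m(x,G)$ forces $h\in\{x,x^{-1}\}$, and Lemma~\ref{SS_in_S} then expresses adjacency purely in terms of the combinatorial data $(p,q,k,l)$ and the symbolic comparison $h=g$ / $h=g^{-1}$ / $h\notin\{g,g^{-1}\}$; these rules are intrinsic to $m$ and to whether the order is $2$, and transfer verbatim to $\I_m(y,H)$.

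There is essentially no obstacle beyond bookkeeping: one must confirm that in the order-$>2$ case the $\varphi$-image of the single "inverse" edge $\x_{[k,l)}\sim \x^{-1}_{[k,l)}$ (case 3 of Lemma~\ref{SS_in_S} with $(i,j)=(k,l)$) is exactly the corresponding edge $\y_{[k,l)}\sim \y^{-1}_{[k,l)}$. This is immediate from the definition of $\varphi$, and it is also the feature distinguishing the two cases: such an edge exists in $\I_m(x)$ iff $x\neq x^{-1}$, matching the dichotomy in the statement.
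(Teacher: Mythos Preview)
Your proposal is correct and follows essentially the same approach as the paper. The paper does not write out a proof for this lemma at all; it relies on the remark made just after Lemma~\ref{I(x)} that ``the structure of $\I_m(x,G)$ depends only on $m$ and the fact whether $x$ has order $2$ or not,'' which is exactly what you have spelled out---using the vertex count (or degree) from Lemma~\ref{I(x)} for the forward direction, and the purely combinatorial adjacency criterion from Lemma~\ref{SS_in_S} for the backward direction.
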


%----------------------------------------------------------------------------------
%----------------------------------------------------------------------------------

This means that locally, around each vertex, all graphs $\G_m(G)$ are isomorphic. 

%----------------------------------------------------------------------------------
%----------------------------------------------------------------------------------

\begin{prop}\label{V(g)=V(h)} 
	Let $G$ and $H$ be groups such that $|G|=|H|$. If $G$ and $H$ have the same numbers of elements of order $2$, then for any $\g\in G^m$ and $\h\in H^m$
	$$\V_m(\g,G)\simeq \V_m(\h,H).$$	
\end{prop}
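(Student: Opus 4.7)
The plan is to first reduce to the case $\g = \h = \e$. Since the right translation $T_\g\colon \x \mapsto \x\g$ is a graph automorphism of $\G_m(G)$ and carries $V(\e)$ bijectively onto $V(\g)$, it yields an isomorphism $\V_m(\e, G)\simeq \V_m(\g, G)$; the analogous statement holds for $H$. It therefore suffices to produce an isomorphism $F\colon \V_m(\e, G)\to \V_m(\e, H)$.

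The vertex set of $\V_m(\e, G)$ is precisely $\Ss$, that is, the set of weight-one elements $\x_{[k,l)}$ with $x\in G^\times$, so I would look for $F$ in the interval-preserving form $F(\x_{[k,l)}) = \phi(x)_{[k,l)}$ for a suitable bijection $\phi\colon G^\times\to H^\times$. To make the construction work I would insist that $\phi$ respects inversion, i.e.\ $\phi(x^{-1}) = \phi(x)^{-1}$. Such a $\phi$ exists: the set $G^\times$ decomposes into involutions together with inverse-pairs $\{x, x^{-1}\}$ (with $x\neq x^{-1}$), and the hypotheses $|G|=|H|$ together with equality of the number of involutions imply that $H^\times$ has the same number of involutions and the same number of such pairs. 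One can then match involutions with involutions by any bijection, and extend each pair-to-pair matching $\{x, x^{-1}\}\leftrightarrow\{u, u^{-1}\}$ by sending $x\mapsto u$ and $x^{-1}\mapsto u^{-1}$.

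It then remains to verify that $F$ is a graph isomorphism. Two vertices $\x_{[i,j)}$ and $\y_{[k,l)}$ of $\V_m(\e, G)$ are adjacent if and only if $\y_{[k,l)}\x^{-1}_{[i,j)}\in \Ss$, and Lemma~\ref{SS_in_S} applied to this product says that adjacency occurs in exactly one of three situations: $(i,j)=(k,l)$ with $y\neq x$; $y=x^{-1}$ with $l=i$ or $k=j$; or $y=x$ with $i\neq k,\ j=l$ or $i=k,\ j\neq l$. Each of these conditions depends only on the interval positions and on whether $y$ equals $x$, $x^{-1}$, or neither; because $\phi$ is a bijection satisfying $\phi(x^{-1}) = \phi(x)^{-1}$, the corresponding conditions on $\phi(x)$ and $\phi(y)$ are equivalent to the original ones, so $F$ preserves adjacency in both directions. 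The only point requiring some care is simply running through the three clauses of Lemma~\ref{SS_in_S}, but no genuine obstacle arises.
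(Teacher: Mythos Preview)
Your proof is correct and follows essentially the same approach as the paper: both reduce to $\e$ via right translation and then exploit the fact that adjacency in $\V_m(\e,G)$ depends only on the interval positions and on whether $y\in\{x,x^{-1}\}$. The paper phrases this via the decomposition of $\V_m(\e,G)$ into the subgraphs $\I_m(x,G)$ (whose isomorphism type depends only on whether $o(x)=2$) together with the interval cliques, whereas you write down the explicit inversion-respecting bijection $\phi$ and the resulting map $F$; the underlying idea and the appeal to Lemma~\ref{SS_in_S} are the same.
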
 

%----------------------------------------------------------------------------------

\begin{proof}
Observe that for each interval $\pr{k}{l}$ the graph $\V_m(\e,G)$ has a clique containing all $|G|-1$\ vertices $\x_{[k,l)}$, $x\in G^{\times}$. So this graph is the union of all disjoint graphs $\I_m(x,G)$, $x\in G$ completed by all edges belonging by these cliques beside edges $\x_{[k,l)}\sim \x^{-1}_{[k,l)}$ which are in $\I_m(x,G)$.  So the structure of $\V_m(\e,G)$ depends only on $m$ and the number of elements of order $2$.
\end{proof}
%----------------------------------------------------------------------------------
%----------------------------------------------------------------------------------

\bigskip

 Let $\B_m$ be a graph whose vertices are all the intervals $\pr{k}{l}$, $1\leqslant k<l\leqslant m+1$. We say that two intervals $\pr{k}{l}$ and $\pr{i}{j}$ are adjacent if there exist $\x_{[k,l)}\in \pr{k}{l}$ and $\y_{[i,j)}\in \pr{i}{j}$ which are adjacent in $\G_m(G)$. Note that $\B_m$ is isomorphic to the graph $\I_m(x,G)$, where $x$ is an element of order $2$. It can be obtained also from $\I_m(x,G)$, where $x$ has order bigger than $2$, by removing the  edges $\x_{[k,l)}\sim\x^{-1}_{[k,l)}$, with simultaneous merging of ends and identifying suitable edges to avoid multiple edges.

%----------------------------------------------------------------------------------
%----------------------------------------------------------------------------------

\begin{prop}\label{kneser}
	The complement $\overline{\B}_m$ of the graph $\B_m$ is isomorphic to the Kneser graph $KG_{m+1,2}$.
\end{prop}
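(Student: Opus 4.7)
The plan is to give a clean combinatorial characterization of adjacency in $\B_m$, then exhibit the natural bijection to the vertex set of the Kneser graph and check that it interchanges adjacency with non-adjacency.

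First I would translate the definition of $\B_m$ using Lemma \ref{SS_in_S}. Two distinct intervals $\pr{k}{l}$ and $\pr{i}{j}$ are adjacent in $\B_m$ iff there exist $x,y\in G^{\times}$ with $\y_{[i,j)}\cdot (\x_{[k,l)})^{-1}\in\Ss$. Since $(\x_{[k,l)})^{-1}=(\x^{-1})_{[k,l)}$, applying Lemma \ref{SS_in_S} with $g:=x^{-1}$ and $h:=y$, and discarding the forbidden case $(i,j)=(k,l)$ of coincident intervals, the product lies in $\Ss$ for some choice of $x,y\in G^{\times}$ iff one of the endpoint identifications $j=k$, $i=l$, $i=k$, or $j=l$ holds. (In case 2 of Lemma \ref{SS_in_S} any $x\neq e$ with $y=x^{-1}$ works, and in case 3 any $x=y\neq e$ works.) Equivalently, the two intervals are adjacent in $\B_m$ precisely when $\{k,l\}\cap\{i,j\}\neq\emptyset$.

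Next, I would define the map $\phi\colon V(\B_m)\to \binom{\{1,\dots,m+1\}}{2}$ by $\phi(\pr{k}{l})=\{k,l\}$. Since the intervals $\pr{k}{l}$ with $1\leqslant k<l\leqslant m+1$ are in one-to-one correspondence with the $2$-element subsets of $\{1,\dots,m+1\}$, this is a bijection of vertex sets.

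Finally, by the definition of the Kneser graph, two $2$-subsets of $\{1,\dots,m+1\}$ are adjacent in $KG_{m+1,2}$ iff they are disjoint. By the characterization above, two distinct intervals are non-adjacent in $\B_m$ (equivalently, adjacent in $\overline{\B}_m$) iff $\{k,l\}\cap\{i,j\}=\emptyset$. Hence $\phi$ carries adjacency in $\overline{\B}_m$ exactly to adjacency in $KG_{m+1,2}$, producing the desired isomorphism. There is no serious obstacle here: the only real content is the short case analysis applying Lemma \ref{SS_in_S}, and everything else is bookkeeping.
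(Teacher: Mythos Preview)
Your proof is correct and follows essentially the same route as the paper: both reduce to the characterization that distinct intervals $\pr{k}{l}$ and $\pr{i}{j}$ are adjacent in $\B_m$ iff $\{k,l\}\cap\{i,j\}\neq\emptyset$ (equivalently, $|\{k,l\}\cap\{i,j\}|=1$), and then observe that the complement condition matches the Kneser adjacency under the obvious bijection $\pr{k}{l}\mapsto\{k,l\}$. The only cosmetic difference is that you invoke Lemma~\ref{SS_in_S} directly, while the paper refers to the analysis in the proof of Proposition~\ref{neighbours}, which itself rests on that lemma.
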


%----------------------------------------------------------------------------------

\begin{proof}
It is seen from observation done in the proof of Proposition \ref{neighbours} that in the graph $\B_m$ two intervals $\pr{k}{l}$ and $\pr{i}{j}$ are adjacent if and only if $|\{k,l\}\cap\{i,j\}|=1$. The vertex set of the Kneser graph $GK_{m+1,2}$ consists of all $2$-element subsets of $\{1,2,\ldots,m+1 \}$. The subsets $\{k,l\}$ and $\{i,j\}$ are adjacent in it if and only if they are disjoint. This is the opposite condition to that satisfied by adjacent intervals $\pr{k}{l}$ and $\pr{i}{j}$.	
\end{proof}

%----------------------------------------------------------------------------------
%----------------------------------------------------------------------------------

Now we are ready to describe maximum cliques of $\G_m(G)$. Since the right transfer maps create a transitive subgroup of the automorphism group of $\G_m(G)$ we need to describe cliques containing $\e$ only, and then contained in $V(\e)\cup\{\e\} = \Ss\cup\{\e\}$.  A clique which beside $\e$ contains vertices of one fixed interval we call {\it an interval clique}. If a clique contains vertices from different intervals, then we call it {\it a dispersed clique}. If $\Q$ is such a clique, then by $\Q^{*}$ we denote the subgraph of $\Q$ on vertices different from $\e$. Notice that for any
$x\in G^\times$ and $1\leqslant j\leqslant m+1$  vertices from the set
$$C(x,j)=\{\x_{[i,j)}^{-1}\, |\, 1\leqslant i < j\}\cup\{\x_{[j,k)}\, |\, j<k\leqslant m+1\}$$
form an $m$-element dispersed clique contained in $I_m(x)$. We use the name {\em maximum dispersed (interval) clique} for a dispersed (resp. interval) clique with a maximal number of vertices. It is clear that notions of maximum and maximal interval clique mean the same, while one can find $3$-vertex dispersed cliques which are maximal but not maximum cliques.

%----------------------------------------------------------------------------------
%----------------------------------------------------------------------------------

\begin{lem}\label{interval_clique}
	If $\Q$ is a maximum interval clique in $\G_m(G)$, then $|Q|=|G|$. Moreover, for any $\x,\y \in Q^{*}$, $\x \neq \y$, and $\g \in V(\e)$, if $\{\x,\y,\g\}$ is a clique, then $\g \in Q$.
\end{lem}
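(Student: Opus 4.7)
The plan is to separate the argument into (i) the size bound, which is essentially immediate, and (ii) the uniqueness of common neighbours, which I handle via Lemma~\ref{SS_in_S}.

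For (i), observe that $\{\e\}\cup \pr{k}{l}$ is itself a clique: its $|G|-1$ non-identity members are pairwise adjacent because $\x_{[k,l)}\cdot \y_{[k,l)}^{-1}=(xy^{-1})_{[k,l)}\in\Ss$ whenever $x\neq y$ in $G^\times$, and each is adjacent to $\e$ by definition of $\Ss$. Any interval clique sits inside a set of this form by the definition of interval clique, so it has at most $|G|$ vertices; a maximum interval clique $Q$ therefore satisfies $|Q|=|G|$ and in fact coincides with $\{\e\}\cup \pr{k}{l}$ for some fixed pair $(k,l)$.

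For (ii), fix this $(k,l)$ and let $\x=\x_{[k,l)}$, $\y=\y_{[k,l)}$ with $x\neq y$ in $G^\times$, and take $\g=\z_{[i,j)}\in V(\e)=\Ss$ adjacent to both $\x$ and $\y$. I apply Lemma~\ref{SS_in_S} to $\z_{[i,j)}\cdot \x_{[k,l)}^{-1}\in \Ss$ (which witnesses $\g\sim\x$, using that $\Ss$ is symmetric), and analogously for $\g\sim\y$. Each application yields one of three alternatives: either $(i,j)=(k,l)$ -- in which case $\g\in\pr{k}{l}\subseteq Q$ and we are done -- or $z$ is forced to equal $x^{-1}$ (resp.\ $y^{-1}$) together with $j=k$ or $i=l$, or $z$ is forced to equal $x$ (resp.\ $y$) together with exactly one endpoint shared between $\{i,j\}$ and $\{k,l\}$.

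If $(i,j)\neq (k,l)$, we are in the second or third alternative for both constraints. Matching the forced values of $z$, the combinations $z=x=y$ and $z=x^{-1}=y^{-1}$ are immediately ruled out by $x\neq y$. The two remaining combinations ($z=x=y^{-1}$ and $z=x^{-1}=y$) impose simultaneous endpoint conditions of the form ``$j=k$ or $i=l$'' together with ``$i\neq k,\, j=l$ or $i=k,\, j\neq l$''. A short four-subcase inspection shows that each pairing collapses to $i=j$ or $k=l$, which is impossible. Hence $(i,j)=(k,l)$ and $\g\in Q$. I expect this final endpoint bookkeeping to be the only slightly tedious step of the proof; everything else follows directly from Lemma~\ref{SS_in_S} and the symmetry of $\Ss$.
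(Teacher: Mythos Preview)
Your proof is correct and takes essentially the same approach as the paper. The paper's proof of the second part is a one-liner observing that for $\g\in V(\e)\setminus\pr{k}{l}$ there is at most one vertex in $\pr{k}{l}$ adjacent to $\g$; your explicit case analysis via Lemma~\ref{SS_in_S} is precisely what justifies that observation, so you have simply spelled out what the paper leaves implicit.
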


%----------------------------------------------------------------------------------

\begin{proof}
The first part of the lemma is obvious. The second one follows from the fact that for a given $\g\in V(\e)$ and an interval $\pr{k}{l}$ not containing $\g$ there exists at most one vertex in $\pr{k}{l}$ adjacent to $\g$.
\end{proof}

%----------------------------------------------------------------------------------
%----------------------------------------------------------------------------------

\begin{lem}\label{dispersed_clique} 
Let  $\Q$ be a maximum dispersed clique in $\G_m(G)$. Then $|Q| =  m+1$ and there exists $x\in G^{\times}$ such that $Q^*\subseteq I_m(x)$. If  $m\geqslant 3$ and $(m,o(x))\neq (3,2)$, then
there exists $j$, $1\leqslant j\leqslant m+1$ such that $Q^*=C(x,j)$.  Furthermore
	\begin{enumerate}
	\item if $o(x)>2$, then for any $\ga, \gb \in Q^{*}$, $\ga \neq \gb$, and $\g \in V(\e)$, if $\{\ga,\gb,\g\}$ is a clique, then $\g \in Q$.
	\item if $o(x)=2$, then for any $\ga,\gb \in Q^{*}$, $\ga \neq \gb$, there exists $\g \in V(\e)\setminus Q$ such that $\{\ga,\gb,\g\}$ is a clique. 
	\item if $m=3$ and $o(x)=2$, then either $Q^{*}=C(x,j)$  or $Q^*=\{\x_{[i,j)}, \x_{[i,k)}, \x_{[j,k)}\}$ for some $1\leqslant i<j<k\leqslant 4$.
	\end{enumerate}
\end{lem}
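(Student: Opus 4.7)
The plan is to split the lemma into three parts: locate $Q^*$ inside some $I_m(x)$; describe which configurations inside $I_m(x)$ can form a maximum dispersed clique, in particular establishing $|Q|=m+1$ and $Q^*=C(x,j)$ in the non-exceptional cases; verify the closure/non-closure statements (1)--(3). Throughout, Lemma \ref{SS_in_S} is the workhorse.

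For the first step, pick two distinct vertices $\ga=\x_{[i,j)}$, $\gb=\y_{[k,l)}\in Q^*$ sitting on different intervals (they exist because $\Q$ is dispersed). Applying Lemma \ref{SS_in_S} to $\gb\ga^{-1}$, the only admissible cases for two distinct intervals force $y\in\{x,x^{-1}\}$, so $\gb\in I_m(x)$. Any other $\gc\in Q^*$ has an interval distinct from that of at least one of $\ga,\gb$, so the same argument gives $\gc\in I_m(x)$.

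For the second step, translate adjacencies inside $I_m(x)$ into conditions on intervals and signs. When $o(x)>2$, $\x_I\sim\x_J$ (same signs) iff $I,J$ share an endpoint on the \emph{same} side, while $\x_I\sim\x^{-1}_J$ iff $I=J$ or $I,J$ are \emph{consecutive} (one ends where the other starts). Setting $S=\{I:\x_I\in Q^*\}$, $T=\{I:\x^{-1}_I\in Q^*\}$, each is a same-side star at some pivot; the consecutive cross-condition (together with the observation that $S\cap T$ can contain at most one interval) forces the two pivots to coincide at a common $j$, with $S=\{[j,b):b>j\}$, $T=\{[a,j):a<j\}$ at the maximum --- exactly $C(x,j)$, with $|S|+|T|=m$. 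When $o(x)=2$, $\x_I=\x^{-1}_I$ and adjacency simplifies to ``share an endpoint on either side''; then $Q^*$ corresponds to a pairwise-intersecting family of $2$-subsets of $\{1,\dots,m+1\}$, and such a family is either a star (size $m$) or a triangle (size $3$). For $m\geq 4$ the star strictly dominates, so $Q^*=C(x,j)$; for $m=3$ both coincide in size, giving the exceptional triangular alternative $\{\x_{[i,j)},\x_{[i,k)},\x_{[j,k)}\}$. Either way $|Q^*|=m$ and $|Q|=m+1$, and the above also establishes (3).

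For (1), with $o(x)>2$ and $Q^*=C(x,j)$, take distinct $\ga,\gb\in Q^*$ and $\g=\y_{[k,l)}\in\Ss$ adjacent to both. Applying Lemma \ref{SS_in_S} to $\g\ga^{-1}$ and $\g\gb^{-1}$ and collating the resulting sub-cases, sign and endpoint incompatibilities leave only configurations in which $\g$ is itself anchored at the pivot $j$ with element $x^{\pm 1}$ in the $C(x,j)$ pattern, whence $\g\in C(x,j)=Q^*$. For (2) with $Q^*=C(x,j)$ a star, take $\ga=\x_I$, $\gb=\x_J$; since $I\cap J=\{j\}$, the symmetric difference $I\triangle J$ is a $2$-subset of $\{1,\dots,m+1\}$ avoiding $j$, hence an interval $K$, and $\g:=\x_K$ lies in $\Ss\setminus Q^*$ and is adjacent to both $\ga$ and $\gb$. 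If instead $Q^*$ is the triangle $\{\x_{[i,j)},\x_{[i,k)},\x_{[j,k)}\}$ and $p$ is the element of $\{1,2,3,4\}$ missing from $\{i,j,k\}$, then $L=\{I\cap J,p\}$ produces a common neighbour $\x_L$ outside $Q^*$. The main obstacle is the casework in (1): for each of the three pair types in $C(x,j)$ (both on the end-at-$j$ side, both on the start-at-$j$ side, or mixed), one must traverse the combinations of cases of Lemma \ref{SS_in_S}, and the hypothesis $o(x)>2$ is precisely what prevents the $x=x^{-1}$ collapse that would otherwise open additional adjacencies and break closure of $C(x,j)$.
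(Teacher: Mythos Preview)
Your proof is correct and follows essentially the same skeleton as the paper's: use Lemma~\ref{SS_in_S} to force $Q^*\subseteq I_m(x)$, then reduce the problem to the combinatorics of pairwise-intersecting $2$-subsets of $\{1,\dots,m+1\}$, and finally read off the signs from Lemma~\ref{SS_in_S}. The only substantive difference is that the paper invokes the Erd\H{o}s--Ko--Rado theorem by name to conclude that a maximum intersecting family of $2$-subsets is a star (uniquely so for $m\ge 4$, with the triangle as an additional extremum when $m=3$), whereas you rederive this directly via your $S/T$ decomposition. For $k=2$ this is elementary and costs little.

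Two remarks. First, your sentence ``the consecutive cross-condition forces the two pivots to coincide at a common $j$'' is slightly loose: with a fixed choice of $x$ there are maximum configurations where the $S$-pivot and $T$-pivot are the two endpoints of a single bridging interval rather than literally equal, but any such configuration is exactly $C(x^{-1},j')$ for some $j'$, so after exercising the freedom in ``there exists $x$'' the conclusion stands. Second, the paper's proof is entirely silent on items (1)--(3); your explicit constructions for (2) and (3) (symmetric difference of the two $2$-subsets in the star case, and $\{I\cap J,p\}$ in the triangle case) and your outline of the Lemma~\ref{SS_in_S} casework for (1) go beyond what the paper supplies. The casework in (1) is indeed tedious but routine: once you know $\g\in I_m(x)$ (which follows since $\ga,\gb$ lie on distinct intervals), each of the three pair-types in $C(x,j)$ yields only the pivot-anchored possibilities, with $o(x)>2$ needed precisely to keep the ``same-side share'' and ``consecutive'' conditions from overlapping.
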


%----------------------------------------------------------------------------------
%----------------------------------------------------------------------------------

\begin{proof} 
Take a clique $\Q$ with a maximal number of vertices and at least two vertices from different intervals. If $\x_{[k,l)}\in Q^{*}$ and $\y\in Q^{*}$ is a vertex from another interval $\pr{i}{j}$, then it is the unique vertex from this interval and either $\y=\x_{[i,j)}$ or $\y=\x^{-1}_{[i,j)}$. Moreover, as it was observed in the proof of Proposition \ref{kneser}, $\{k,l\}$ and $\{i,j\}$ have exactly one common element. This last condition means that there are at most as many elements in $Q^{*}$ as in the maximal family of $2$-element subsets of the set $\{1,2,\ldots,m+1\}$ every two of which has one common element. By the well known Erd{\"o}s-Ko-Rado theorem on intersecting families, the maximal family contains at most $m$ subsets  and then $|Q|\leqslant m+1$. On the other hand for a fixed element $x\in G$ the set $\{ \e, \x_{[1,2)}, \ldots \x_{[1,m+1)}\}$ forms a clique in $\G_m(G)$ with $m+1$ vertices.

For the proof of the second part of Lemma we may assume that $m>3$ as the case $m=3$ is easily seen from {\bf Fig.\,1}. It follows from the mentioned Erd{\"o}s-Ko-Rado theorem that the set of all pairs ${k,l}$ such that $\x_{[k,l)}$ or $\x_{[k,l)}^{-1}$ has exactly one common element $j$.
	Now it follows from Lemma \ref{SS_in_S} which vertices belong to $Q$.
\end{proof}

%----------------------------------------------------------------------------------
%----------------------------------------------------------------------------------

As a consequence of Lemmas \ref{interval_clique} and \ref{dispersed_clique} we obtain

\begin{prop}\label{maximum_clique}
	The maximal number of vertices in a clique of a graph $\G_{m}(G)$ is equal 
	$$\left\{\begin{array}{cl}
	{\rm max}\{m+1, |G|\} & {\rm if}\ \ (m,|G|)\neq (2,2)\\
	4 & {\rm if}\ \ (m,|G|)= (2,2).\\
	\end{array}\right.
	$$
\end{prop}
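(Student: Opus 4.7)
The plan is to reduce, by the vertex-transitivity furnished by right translations $T_\g\colon \x\mapsto \x\g$, to cliques $\Q$ that contain the identity $\e$. Every such clique satisfies $Q\setminus\{\e\}\subseteq V(\e)=\Ss$, so each non-identity vertex lies in some interval $\pr{k}{l}$. According to whether these non-identity vertices all lie in a single interval or span at least two, $\Q$ is either an interval clique or a dispersed clique, and I apply Lemmas \ref{interval_clique} and \ref{dispersed_clique} respectively to bound $|Q|\le |G|$ and $|Q|\le m+1$. This yields $|Q|\le\max\{|G|,\,m+1\}$ outside the exceptional case.

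To show both bounds are sharp I would exhibit two explicit cliques. The set $\{\e\}\cup\pr{1}{2}$ is an interval clique of size $|G|$, since any two of its non-identity vertices $\x_{[1,2)}$, $\y_{[1,2)}$ differ by $(\y\x^{-1})_{[1,2)}\in\Ss$. For any fixed $x\in G^\times$ the set $\{\e,\,\x_{[1,2)},\,\x_{[1,3)},\dots,\x_{[1,m+1)}\}$ is a dispersed clique of size $m+1$, because $\x_{[1,j)}$ and $\x_{[1,k)}$ with $j<k$ differ by $\x_{[j,k)}\in\Ss$. Combined with the upper bound, this gives the value $\max\{|G|,\,m+1\}$ when $(m,|G|)\neq(2,2)$.

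The main obstacle is the exceptional case $(m,|G|)=(2,2)$. Here I would argue directly: writing $G=\{e,x\}$ with $x^2=e$ gives $\Ss=\{(x,e),(e,x),(x,x)\}$, and a case check shows that every two distinct elements of $G^2$ differ by an element of $\Ss$, so $\G_2(G)\simeq K_4$ and the maximum clique has $4$ vertices. The conceptual reason this one case escapes the generic formula is that the Erdős--Ko--Rado bound underlying Lemma \ref{dispersed_clique} is sharp only when $m+1\ge 4$: when $m=2$, the three $2$-subsets of $\{1,2,3\}$ pairwise intersect, and the order-$2$ element of $G=C_2$ lets the three representatives $(x,e),(e,x),(x,x)$ of those intervals be pairwise adjacent, producing a dispersed $4$-clique. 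For $|G|\ge 3$ the interval bound $|G|$ already dominates such a dispersed $4$-clique, so only $|G|=2$ registers the anomaly.
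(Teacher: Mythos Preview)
Your proof is correct and follows essentially the same route as the paper: reduce to cliques through $\e$ via vertex-transitivity, split into interval versus dispersed cliques, apply Lemmas~\ref{interval_clique} and~\ref{dispersed_clique}, and treat $m=2$ separately. Your explicit $|G|$-clique $\{\e\}\cup\pr{1}{2}$ and $(m+1)$-clique $\{\e,\x_{[1,2)},\dots,\x_{[1,m+1)}\}$ coincide with the paper's, and your direct verification that $\G_2(C_2)\cong K_4$ is a clean way to handle the exception.

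One small imprecision: your sentence ``for $|G|\ge 3$ the interval bound $|G|$ already dominates such a dispersed $4$-clique'' is literally false when $|G|=3$, since $3<4$. What actually saves the case $|G|=3$, $m=2$ is that the unique group of order $3$ has no involution, so the dispersed $4$-clique you described (which requires $x=x^{-1}$) cannot arise there; the paper makes exactly this observation. With that one clause inserted, your $m=2$ analysis is complete.
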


%----------------------------------------------------------------------------------

\begin{proof} Using a suitable right transfer map we may assume that the considered cliques contain $\e$, so they are either interval cliques or dispersed cliques. If $m>2$ the proposition follows immediately from both Lemmas. For $m=2$ and $x$ of order $2$ we have a $4$-vertex clique $\{\e,(x,e),(x,x),(e,x)\}$. Note also that if $G$ does not have  elements of order $2$, then there is no a $4$-vertex clique in $\G_2(G)$.  
\end{proof}

%----------------------------------------------------------------------------------
%----------------------------------------------------------------------------------

If $v$ is a vertex of a graph $\X$ and $\Y$ is a subgraph of $\mathscr{X}$ with the vertex set $Y$, then we say that $v$ is {\em a neighbour of $\Y$} if $v\notin Y$ and for some $y\in Y$ we have $v\sim y$. By $\N(\Y)$ we denote the subgraph of $\X$ whose set of vertices is equal to the set of all neighbours of $\Y$. According to our convention, $N(\Y)$ is the vertex set of $\N(\Y)$.

Notice that if $|G|=m+1$, then maximum interval cliques and maximum dispersed cliques in $\G_m(G)$ have the same size. Below we will show
that the {\em neighbours graph} $\N(\Q^*)$ is an invariant differentiating the type of a maximum clique. 

%----------------------------------------------------------------------------------
%----------------------------------------------------------------------------------
\begin{prop}\label{inv_type}
Let $G$ be a group of order bigger than $3$ and $|G|=m+1$. Then
\begin{enumerate}
\item if $\Q$ is a maximum dispersed clique in $\G_m(G)$, then the graph $\N(\Q^*)$ is not regular;

\item if $\Q$ is  a maximum interval clique in $\G_m(G)$, then the graph $\N(\Q^*)$ is regular of degree $d=2m-2$.
\end{enumerate}
\end{prop}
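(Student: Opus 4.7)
After applying a right-translation (which is always a graph automorphism of $\G_m(G)$) we may assume $\e \in Q$. In the interval case this gives $Q^* = \pr{k}{l}$ for some fixed $k<l$, and in the dispersed case Lemma~\ref{dispersed_clique} gives $Q^* = C(x,j)$ for some $x \in G^{\times}$ and some $j \in \{1,\dots,m+1\}$.

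\smallskip

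\textbf{Part (2), interval clique.} The key observation is that the interval $\pr{k}{l}$ carries a large amount of graph-automorphic symmetry. Two natural families of automorphisms of $\G_m(G)$ stabilise $Q^* = \pr{k}{l}$ setwise: (i) the coordinate-wise application $F_f(g_1,\dots,g_m) = (f(g_1),\dots,f(g_m))$ for $f \in \mathbf{Aut}(G)$, which preserves every interval $\pr{i}{j}$ as a set and in particular acts transitively on $\pr{k}{l}$; and (ii) right-translations by $\y_{[k,l)}$, $y \in G^{\times}$, which permute the enlarged clique $Q = \{\e\} \cup Q^*$. Combining these one gets an action on $N(Q^*)$ with few enough orbits that a single local degree count on each orbit suffices to establish regularity. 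To carry out the count, I would fix a representative $v \in N(Q^*)$, enumerate its $\G_m(G)$-neighbours by Lemma~\ref{SS_in_S}, and then apply Lemma~\ref{SS_in_S} a second time relative to the interval $(k,l)$ in order to select those neighbours that still lie in $N(Q^*)$. Carrying this out carefully gives the asserted common degree $d = 2m-2$.

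\smallskip

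\textbf{Part (1), dispersed clique.} In the dispersed case $Q^* = C(x,j)$ is pivoted at the single index $j$: it mixes elements $\x^{-1}_{[i,j)}$ (with $i<j$) with elements $\x_{[j,k')}$ (with $j<k'$), and no automorphism of $\G_m(G)$ that stabilises $Q^*$ can collapse this dichotomy. I would therefore exhibit two concrete test vertices $v_1, v_2 \in N(Q^*)$ that are adjacent to $Q^*$ in combinatorially different patterns -- for instance a common neighbour of two ``left'' elements $\x^{-1}_{[i_1,j)}, \x^{-1}_{[i_2,j)}$ of $Q^*$ versus a common neighbour of one left and one right element -- and compute their degrees in $\N(\Q^*)$ directly, using the degree tables from Proposition~\ref{neighbours} together with Lemma~\ref{SS_in_S}. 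The two counts disagree, so $\N(\Q^*)$ is not regular. The exceptional shape of $Q^*$ when $(m,o(x))=(3,2)$, noted in Lemma~\ref{dispersed_clique}, can be handled in a parallel way with two analogous test vertices.

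\smallskip

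\textbf{Main obstacle.} The heavy lifting is in part~(2): for each candidate neighbour of a fixed $v \in N(Q^*)$ one has to run a double case analysis through Lemma~\ref{SS_in_S}, first to certify adjacency in $\G_m(G)$ and then to certify that the resulting vertex again lies in $N(Q^*)$. One must then verify that the final tally is independent of the orbit representative, which is precisely what the symmetries of $\pr{k}{l}$ collected above are used for. Part~(1) by contrast is largely a matter of choosing the right pair of test vertices and unpacking Lemma~\ref{SS_in_S} on them.
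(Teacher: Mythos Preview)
Your plan for Part~(1) is in the right spirit and essentially matches what the paper does: it partitions $N(\Q^*)$ into pieces (called $A,B,C$ when $o(x)>2$ and $D,E$ when $o(x)=2$) and shows the degree is constant on each piece but differs between pieces. Exhibiting two concrete test vertices is a fine substitute for this, once you actually carry the computation out.

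The genuine gap is in Part~(2). Both symmetries you invoke fail to do the job.

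First, $\mathbf{Aut}(G)$ does \emph{not} in general act transitively on $G^{\times}$, hence not on $\pr{k}{l}$. Already for $G=C_4$ (the very case $m=3$, $|G|=4$ covered by the proposition) we have $\mathbf{Aut}(C_4)\cong C_2$, which fixes the involution $x^2$ and swaps $x$ with $x^{-1}$; so the $F_f$'s give you two orbits on $\pr{k}{l}$, not one. For groups with elements of several different orders the orbit structure is even worse.

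Second, the right translations $T_{\y_{[k,l)}}$ permute the enlarged clique $Q=\{\e\}\cup\pr{k}{l}$ but do \emph{not} fix $\e$; in particular they do not stabilise $V(\e)$. Since $\N(\Q^*)$ in this proposition is the neighbour graph taken inside $\V_m(\e)$ (this is how the paper's proof and figures read the statement), these translations are not automorphisms of the ambient graph and cannot be used to move one vertex of $N(\Q^*)$ to another.

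What the paper actually does for Part~(2) is a direct count with no appeal to transitivity. A neighbour $v$ of $Q^*=\pr{k}{l}$ lies in $I_m(x)$ for some $x\in G^{\times}$, and one separates the two cases $o(x)>2$ and $o(x)=2$. Using the clique decomposition
\[
V(\x_{[k,l)})\cap I_m(x)=\bigl(C(x,k)\setminus\{\x_{[k,l)}\}\bigr)\cup\bigl(C(x^{-1},l)\setminus\{\x_{[k,l)}\}\bigr)
\]
one checks that $v$ has exactly $m$ (resp.\ $m-1$) neighbours in $N(\Q^*)\cap I_m(x)$ when $o(x)>2$ (resp.\ $o(x)=2$), and exactly $|G|-3=m-2$ (resp.\ $|G|-2=m-1$) neighbours in $N(\Q^*)\setminus I_m(x)$, coming from the other elements $\y_{[s,t)}$ in the same interval as $v$. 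In both cases the total is $2m-2$. The point is that the dichotomy is by $o(x)$, not by an $\mathbf{Aut}(G)$-orbit, and the miracle is that the two contributions sum to the same number in both cases. Your symmetry shortcut would not detect this and, as stated, does not reduce the problem to finitely many representatives.
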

\begin{proof}
Figure 3 presents graphs $\V_3(\e)$ for  the groups $C_4=\{e,x,x^2,x^3\}$ and $C_2\times C_2=\{e,x,y,z\}$.  The neighbours graphs for dispersed and interval cliques of $\V_3(\e)$ (for these groups) are presented on the Figures 4,5 and 6.
\begin{center}
\begin{tikzpicture}[scale=0.3] % CENT
 \clip (-12,-12) rectangle (12,12);
%% some definitions 
\def\R{2.2} % sphere radius
\def\r{2}

%cosinusy kątów 18,54,90,126...
\def\ca{0.866}
\def\cb{0}
\def\cc{-0.866}
\def\cd{-0.866}
\def\ce{0}
\def\cf{0.866}

%sinusy kątów 18,54,90,126...
\def\sa{0.5}
\def\sb{1}
\def\sc{0.5}
\def\sd{-0.5}
\def\se{-1}
\def\sf{-0.5}

%cosinusy dla małego I(x)

\def\cam{0.5}
\def\cbm{-0.5}
\def\ccm{-1}
\def\cdm{-0.5}
\def\cem{0.5}
\def\cfm{1}

%sinusy dla małego I(x)
\def\sam{0.866}
\def\sbm{0.866}
\def\scm{0}
\def\sdm{-0.866}
\def\sem{-0.866}
\def\sfm{0}

% Parametr odległości wierzchołków zewnętrznych od początku układu
\def\a{4}
%Wierzchołki zewnętrzne A?=(\a*\R*cos(),\a*\R*sin) 

\def\Aa{(\a*\R*\ca,\a*\R*\sa)}
\def\Ab{(\a*\R*\cb,\a*\R*\sb)}
\def\Ac{(\a*\R*\cc,\a*\R*\sc)}
\def\Ad{(\a*\R*\cd,\a*\R*\sd)}
\def\Ae{(\a*\R*\ce,\a*\R*\se)}
\def\Af{(\a*\R*\cf,\a*\R*\sf)}

% Parametry odległości wierzchołków wewnętrznych od początku układu
\def\b{2.5}

%Współrzędne wierzchołki zewnętrzne A?=(\a*\R*cos(),\a*\R*sin) 

\def\Ba{(\b*\R*\ca,\b*\R*\sa)}
\def\Bb{(\b*\R*\cb,\b*\R*\sb)}
\def\Bc{(\b*\R*\cc,\b*\R*\sc)}
\def\Bd{(\b*\R*\cd,\b*\R*\sd)}
\def\Be{(\b*\R*\ce,\b*\R*\se)}
\def\Bf{(\b*\R*\cf,\b*\R*\sf)}

%Wielość wierzchołków
\def\v{5pt}

\def\c{1.2}

%Współrzędne wierzchołków małego sześciokąta 

\def\Ca{(\c*\R*\cam,\c*\R*\sam)}
\def\Cb{(\c*\R*\cbm,\c*\R*\sbm)}
\def\Cc{(\c*\R*\ccm,\c*\R*\scm)}
\def\Cd{(\c*\R*\cdm,\c*\R*\sdm)}
\def\Ce{(\c*\R*\cem,\c*\R*\sem)}
\def\Cf{(\c*\R*\cfm,\c*\R*\sfm)}

%Pary elementów wzajemnie odwrotnych

\draw[very thick] [color=red]  
\Aa -- \Ba -- \Ca -- \Aa
\Ab -- \Bb -- \Cb -- \Ab
\Ac -- \Bc -- \Cc -- \Ac
\Ad -- \Bd -- \Cd -- \Ad
\Ae -- \Be -- \Ce -- \Ae
\Af -- \Bf -- \Cf -- \Af;

%Wierzchołki zewnętrzne

\filldraw[black, thick] 
\Aa circle(\v)
\Ab circle(\v)
\Ac circle(\v)
\Ad circle(\v)
\Ae circle(\v)
\Af circle(\v);

%Oznaczenia wierzchołków zewnętrznych:

\node[right] at  (\a*\R*\ca + 0.15,\a*\R*\sa)   {\footnotesize ${\bf{x}_{[2,4)}}$};
\node[above] at  (\a*\R*\cb,\a*\R*\sb + 0.1)    {\footnotesize ${\bf{x}^{-1}_{[1,2)}}$};
\node[left]  at  (\a*\R*\cc - 0.15,\a*\R*\sc)   {\footnotesize ${\bf{x}_{[2,3)}}$};
\node[left]  at  (\a*\R*\cd - 0.15,\a*\R*\sd)   {\footnotesize ${\bf{x}_{[1,3)}}$};
\node[below] at  (\a*\R*\ce,\a*\R*\se - 0.15)   {\footnotesize ${\bf{x}_{[3,4)}}$};
\node[right] at  (\a*\R*\cf + 0.15,\a*\R*\sf)   {\footnotesize ${\bf{x}_{[1,4)}}$};

%Wierzchołki zewnętrzne

\filldraw[black, thick] 
\Ba circle(\v)
\Bb circle(\v)
\Bc circle(\v)
\Bd circle(\v)
\Be circle(\v)
\Bf circle(\v);

%Cykl Mobiusa
\draw[thick] \Aa -- \Ab -- \Ac -- \Ad -- \Be -- \Bf --\Ba -- \Bb -- \Bc -- \Bd -- \Ae -- 
\Af -- \Aa;
%Pozostałe cykle 
\draw[thin]   \Aa -- \Ac -- \Be -- \Ba -- \Bc -- \Ae -- \Aa 
                    \Ab -- \Bd -- \Bf -- \Ab 
                    \Ad -- \Af -- \Bb -- \Ad;

\filldraw[blue, thick] 
\Ca circle(\v)
\Cb circle(\v)
\Cc circle(\v)
\Cd circle(\v)
\Ce circle(\v)
\Cf circle(\v);

%cykl małego sześciokąta
\draw[thick, blue]
\Ca -- \Cb -- \Cc -- \Cd -- \Ce -- \Cf -- \Ca;
%mała gwiazda
\draw[thick, thin, blue]
\Ca -- \Cc -- \Ce -- \Ca
\Cb -- \Cd -- \Cf -- \Cb;

%Podpis
\node at (0,-11.5) {\footnotesize {\rm  The graph}\  $\mathscr{V}_3(\mathbf{e})$\ for $G=C_4$};

\end{tikzpicture} 
\hskip0.5cm
\begin{tikzpicture}[scale=0.3] % CENT
 \clip (-12,-12) rectangle (12,12);
%% some definitions 
\def\R{2.2} % sphere radius
\def\r{2}

%cosinusy kątów 18,54,90,126...
\def\ca{0.866}
\def\cb{0}
\def\cc{-0.866}
\def\cd{-0.866}
\def\ce{0}
\def\cf{0.866}

%sinusy kątów 18,54,90,126...
\def\sa{0.5}
\def\sb{1}
\def\sc{0.5}
\def\sd{-0.5}
\def\se{-1}
\def\sf{-0.5}

%cosinusy dla małego I(x)

\def\cam{0.5}
\def\cbm{-0.5}
\def\ccm{-1}
\def\cdm{-0.5}
\def\cem{0.5}
\def\cfm{1}

%sinusy dla małego I(x)
\def\sam{0.866}
\def\sbm{0.866}
\def\scm{0}
\def\sdm{-0.866}
\def\sem{-0.866}
\def\sfm{0}

% Parametr odległości wierzchołków zewnętrznych od początku układu
\def\a{4}
%Wierzchołki zewnętrzne A?=(\a*\R*cos(),\a*\R*sin) 

\def\Aa{(\a*\R*\ca,\a*\R*\sa)}
\def\Ab{(\a*\R*\cb,\a*\R*\sb)}
\def\Ac{(\a*\R*\cc,\a*\R*\sc)}
\def\Ad{(\a*\R*\cd,\a*\R*\sd)}
\def\Ae{(\a*\R*\ce,\a*\R*\se)}
\def\Af{(\a*\R*\cf,\a*\R*\sf)}

% Parametry odległości wierzchołków wewnętrznych od początku układu
\def\b{2.5}

%Współrzędne wierzchołki zewnętrzne A?=(\a*\R*cos(),\a*\R*sin) 

\def\Ba{(\b*\R*\ca,\b*\R*\sa)}
\def\Bb{(\b*\R*\cb,\b*\R*\sb)}
\def\Bc{(\b*\R*\cc,\b*\R*\sc)}
\def\Bd{(\b*\R*\cd,\b*\R*\sd)}
\def\Be{(\b*\R*\ce,\b*\R*\se)}
\def\Bf{(\b*\R*\cf,\b*\R*\sf)}

%Wielość wierzchołków
\def\v{5pt}

\def\c{1.2}

%Współrzędne wierzchołków małego sześciokąta 

\def\Ca{(\c*\R*\cam,\c*\R*\sam)}
\def\Cb{(\c*\R*\cbm,\c*\R*\sbm)}
\def\Cc{(\c*\R*\ccm,\c*\R*\scm)}
\def\Cd{(\c*\R*\cdm,\c*\R*\sdm)}
\def\Ce{(\c*\R*\cem,\c*\R*\sem)}
\def\Cf{(\c*\R*\cfm,\c*\R*\sfm)}

%Pary elementów wzajemnie odwrotnych

\draw[very thick] [color=red]  
\Aa -- \Ba -- \Ca -- \Aa
\Ab -- \Bb -- \Cb -- \Ab
\Ac -- \Bc -- \Cc -- \Ac
\Ad -- \Bd -- \Cd -- \Ad
\Ae -- \Be -- \Ce -- \Ae
\Af -- \Bf -- \Cf -- \Af;

%Wierzchołki zewnętrzne

\filldraw[black, thick] 
\Aa circle(\v)
\Ab circle(\v)
\Ac circle(\v)
\Ad circle(\v)
\Ae circle(\v)
\Af circle(\v);

%Oznaczenia wierzchołków zewnętrznych:

\node[right] at  (\a*\R*\ca + 0.15,\a*\R*\sa)   {\footnotesize ${\bf{x}_{[2,4)}}$};
\node[above] at  (\a*\R*\cb,\a*\R*\sb + 0.1)    {\footnotesize ${\bf{x}_{[1,2)}}$};
\node[left]  at  (\a*\R*\cc - 0.15,\a*\R*\sc)   {\footnotesize ${\bf{x}_{[2,3)}}$};
\node[left]  at  (\a*\R*\cd - 0.15,\a*\R*\sd)   {\footnotesize ${\bf{x}_{[1,3)}}$};
\node[below] at  (\a*\R*\ce,\a*\R*\se - 0.15)   {\footnotesize ${\bf{x}_{[3,4)}}$};
\node[right] at  (\a*\R*\cf + 0.15,\a*\R*\sf)   {\footnotesize ${\bf{x}_{[1,4)}}$};

%Wierzchołki zewnętrzne

\filldraw[black, thick] 
\Ba circle(\v)
\Bb circle(\v)
\Bc circle(\v)
\Bd circle(\v)
\Be circle(\v)
\Bf circle(\v);

%Sześciokąt zewnętrzny
\draw[thick] 
\Aa -- \Ab -- \Ac -- \Ad -- \Ae -- \Af -- \Aa;
\draw[thin]   
\Aa -- \Ac -- \Ae -- \Aa
\Ab -- \Ad -- \Af -- \Ab;

%Sześciokąt środkowy
\draw[thick]
\Ba -- \Bb --\Bc -- \Bd -- \Be -- \Bf -- \Ba; 
\draw[thin]
\Ba  --\Bc -- \Be -- \Ba 
\Bb  --\Bd -- \Bf -- \Bb;

\filldraw[thick] 
\Ca circle(\v)
\Cb circle(\v)
\Cc circle(\v)
\Cd circle(\v)
\Ce circle(\v)
\Cf circle(\v);

%Mały sześciokąta
\draw[thick]
\Ca -- \Cb -- \Cc -- \Cd -- \Ce -- \Cf -- \Ca;
\draw[thin]
\Ca -- \Cc -- \Ce -- \Ca
\Cb -- \Cd -- \Cf -- \Cb;

%Podpis
\node at (0,-11.5) {\footnotesize {\rm  The graph}\  $\mathscr{V}_3(\mathbf{e})$\ for $G=C_2\times C_2$};

\end{tikzpicture}

\par
\

{\bf Fig.\,3.}\footnote{The red edges connect elements from the same interval}

\end{center}

%---------------------------------------------------------------------------------------------------
%---------------------------------------------------------------------------------------------------
\begin{center}
\begin{tikzpicture}[scale=0.3] % CENT
 \clip (-12.7,-14) rectangle (13,8);
%% some definitions 
\def\R{2.2} % sphere radius
\def\r{2}

%cosinusy kątów 18,54,90,126...
\def\ca{0.866}
\def\cb{0}
\def\cc{-0.866}
\def\cd{-0.866}
\def\ce{0}
\def\cf{0.866}

%sinusy kątów 18,54,90,126...
\def\sa{0.5}
\def\sb{1}
\def\sc{0.5}
\def\sd{-0.5}
\def\se{-1}
\def\sf{-0.5}

%cosinusy dla małego I(x)

\def\cam{0.5}
\def\cbm{-0.5}
\def\ccm{-1}
\def\cdm{-0.5}
\def\cem{0.5}
\def\cfm{1}

%sinusy dla małego I(x)
\def\sam{0.866}
\def\sbm{0.866}
\def\scm{0}
\def\sdm{-0.866}
\def\sem{-0.866}
\def\sfm{0}

% Parametr odległości wierzchołków zewnętrznych od początku układu
\def\a{4}
%Wierzchołki zewnętrzne A?=(\a*\R*cos(),\a*\R*sin) 

\def\Aa{(\a*\R*\ca,\a*\R*\sa)}
\def\Ab{(\a*\R*\cb,\a*\R*\sb)}
\def\Ac{(\a*\R*\cc,\a*\R*\sc)}
\def\Ad{(\a*\R*\cd,\a*\R*\sd)}
\def\Ae{(\a*\R*\ce,\a*\R*\se)}
\def\Af{(\a*\R*\cf,\a*\R*\sf)}

% Parametry odległości wierzchołków wewnętrznych od początku układu
\def\b{2.5}

%Współrzędne wierzchołki zewnętrzne A?=(\a*\R*cos(),\a*\R*sin) 

\def\Ba{(\b*\R*\ca,\b*\R*\sa)}
\def\Bb{(\b*\R*\cb,\b*\R*\sb)}
\def\Bc{(\b*\R*\cc,\b*\R*\sc)}
\def\Bd{(\b*\R*\cd,\b*\R*\sd)}
\def\Be{(\b*\R*\ce,\b*\R*\se)}
\def\Bf{(\b*\R*\cf,\b*\R*\sf)}

%Wielość wierzchołków
\def\v{5pt}

\def\c{1.2}

%Współrzędne wierzchołków małego sześciokąta 

\def\Ca{(\c*\R*\cam,\c*\R*\sam)}
\def\Cb{(\c*\R*\cbm,\c*\R*\sbm)}
\def\Cc{(\c*\R*\ccm,\c*\R*\scm)}
\def\Cd{(\c*\R*\cdm,\c*\R*\sdm)}
\def\Ce{(\c*\R*\cem,\c*\R*\sem)}
\def\Cf{(\c*\R*\cfm,\c*\R*\sfm)}

%Pary elementów wzajemnie odwrotnych

%\draw[thick] [color=red]  
%\Aa -- \Ba -- \Ca -- \Aa
%\Ab -- \Bb -- \Cb -- \Ab
%\Ac -- \Bc -- \Cc -- \Ac
%\Ad -- \Bd -- \Cd -- \Ad
%\Ae -- \Be -- \Ce -- \Ae
%\Af -- \Bf -- \Cf -- \Af;

%Wierzchołki kliki
%\filldraw[blue, thick] 
%\Aa circle(\v)
%\Ab circle(\v)
%\Ac circle(\v);

%Pozostałe wierzchołki zewnętrznejczęści I(x)
\filldraw[black, thick] 
\Ad circle(\v)
\Ae circle(\v)
\Af circle(\v);

%Oznaczenia wierzchołków kliki:
%\node[right] at  (\a*\R*\ca + 0.15,\a*\R*\sa)   {\footnotesize ${\bf{x}_{[2,4)}}$};
%\node[above] at  (\a*\R*\cb,\a*\R*\sb + 0.1)    {\footnotesize ${\bf{x}^{-1}_{[1,2)}}$};
%\node[left]  at  (\a*\R*\cc - 0.15,\a*\R*\sc)   {\footnotesize ${\bf{x}_{[2,3)}}$};

%Oznaczenia pozostałych zewnętrznych wierzchołków
\node[left]  at  (\a*\R*\cd - 0.15,\a*\R*\sd)   {\footnotesize ${\bf{x}_{[1,3)}}$};
\node[below] at  (\a*\R*\ce,\a*\R*\se - 0.15)   {\footnotesize ${\bf{x}_{[3,4)}}$};
\node[right] at  (\a*\R*\cf + 0.15,\a*\R*\sf)   {\footnotesize ${\bf{x}_{[1,4)}}$};

%Oznaczenia wierzchołków wew.
\node[right] at  (\b*\R*\ca + 0.15,\b*\R*\sa)   {\footnotesize ${\bf{x}^{-1}_{[2,4)}}$};
\node[above] at  (\b*\R*\cb,\b*\R*\sb + 0.1)    {\footnotesize ${\bf{x}_{[1,2)}}$};
\node[left]  at  (\b*\R*\cc - 0.15,\b*\R*\sc)   {\footnotesize ${\bf{x}^{-1}_{[2,3)}}$};
%\node[right]  at  (\b*\R*\cd + 0.15,\b*\R*\sd)   {\footnotesize ${\bf{x}_{[1,3)}}$};
\node[right] at  (\b*\R*\ce,\b*\R*\se-.5)   {\footnotesize ${\bf{x}^{-1}_{[3,4)}}$};
%\node[left, above] at  (\b*\R*\cf,\b*\R*\sf)   {\footnotesize ${\bf{x}_{[1,4)}}$};
\node[below] at  (\c*\R*\cam + 0.5,\c*\R*\sam)   {\footnotesize ${\bf{x}^2_{[2,4)}}$};
%\node[above] at  (\c*\R*\cbm,\c*\R*\sbm + 0.1)    {\footnotesize ${\bf{x}^2_{[1,2)}}$};
\node[right]  at  (\c*\R*\ccm,\c*\R*\scm-.5)   {\footnotesize ${\bf{x}^2_{[2,3)}}$};

%Wierzchołki wewnętrzne I(x)

\filldraw[black, thick] 
\Ba circle(\v)
\Bb circle(\v)
\Bc circle(\v)
\Bd circle(\v)
\Be circle(\v)
\Bf circle(\v);

% Klika i jej sąsiedzi

%Krawędzie sąsiadów kliki
\draw[thick] 
\Ba -- \Bb
\Ba -- \Bf
\Ba -- \Bc
\Ba -- \Be
\Bb -- \Ba
\Bb -- \Bc
\Bb -- \Ad
\Bb -- \Af
\Bc -- \Bd
\Bc -- \Ae
\Ad -- \Be
\Ad -- \Af
\Bd -- \Ae
\Bd -- \Bf
\Ae -- \Af
\Be -- \Bf;
\draw[blue, thick] 
\Ca -- \Cb -- \Cc -- \Ca;

%Czerwone krawędzie sąsiadów kliki

\draw[thick,red] 
\Ba -- \Ca
\Bb -- \Cb
\Bc -- \Cc
\Ae -- \Be
\Ad -- \Bd
\Af -- \Bf;
%Sąsiedzi kliki z I(x^2) 

\filldraw[blue, thick] 
\Ca circle(\v)
\Cb circle(\v)
\Cc circle(\v)
;

%Podpis
\node at (0,-11.5) {\footnotesize {\rm  The graph $\N(\Q^*)$ for a dispersed clique} };
\node at (0,-13) {\footnotesize ($Q^*=\{\mathbf{x}^{-1}_{[1,2)},\mathbf{x}_{[2,3)},\mathbf{x}_{[2,4)} \}$,\  $G=C_4$)};
\end{tikzpicture} 
\hskip.5cm
\begin{tikzpicture}[scale=0.3] % CENT
 \clip (-12.7,-14) rectangle (13,8);
%% some definitions 
\def\R{2.2} % sphere radius
\def\r{2}

%cosinusy kątów 18,54,90,126...
\def\ca{0.866}
\def\cb{0}
\def\cc{-0.866}
\def\cd{-0.866}
\def\ce{0}
\def\cf{0.866}

%sinusy kątów 18,54,90,126...
\def\sa{0.5}
\def\sb{1}
\def\sc{0.5}
\def\sd{-0.5}
\def\se{-1}
\def\sf{-0.5}

%cosinusy dla małego I(x)

\def\cam{0.5}
\def\cbm{-0.5}
\def\ccm{-1}
\def\cdm{-0.5}
\def\cem{0.5}
\def\cfm{1}

%sinusy dla małego I(x)
\def\sam{0.866}
\def\sbm{0.866}
\def\scm{0}
\def\sdm{-0.866}
\def\sem{-0.866}
\def\sfm{0}

% Parametr odległości wierzchołków zewnętrznych od początku układu
\def\a{4}

%Wierzchołki zewnętrzne A?=(\a*\R*cos(),\a*\R*sin) 
\def\Aa{(\a*\R*\ca,\a*\R*\sa)}
\def\Ab{(\a*\R*\cb,\a*\R*\sb)}
\def\Ac{(\a*\R*\cc,\a*\R*\sc)}
\def\Ad{(\a*\R*\cd,\a*\R*\sd)}
\def\Ae{(\a*\R*\ce,\a*\R*\se)}
\def\Af{(\a*\R*\cf,\a*\R*\sf)}

% Parametry odległości wierzchołków wewnętrznych od początku układu
\def\b{2.5}

%Współrzędne wierzchołki zewnętrzne A?=(\a*\R*cos(),\a*\R*sin) 

\def\Ba{(\b*\R*\ca,\b*\R*\sa)}
\def\Bb{(\b*\R*\cb,\b*\R*\sb)}
\def\Bc{(\b*\R*\cc,\b*\R*\sc)}
\def\Bd{(\b*\R*\cd,\b*\R*\sd)}
\def\Be{(\b*\R*\ce,\b*\R*\se)}
\def\Bf{(\b*\R*\cf,\b*\R*\sf)}

%Wielość wierzchołków
\def\v{5pt}

\def\c{1.2}

%Współrzędne wierzchołków małego sześciokąta 

\def\Ca{(\c*\R*\cam,\c*\R*\sam)}
\def\Cb{(\c*\R*\cbm,\c*\R*\sbm)}
\def\Cc{(\c*\R*\ccm,\c*\R*\scm)}
\def\Cd{(\c*\R*\cdm,\c*\R*\sdm)}
\def\Ce{(\c*\R*\cem,\c*\R*\sem)}
\def\Cf{(\c*\R*\cfm,\c*\R*\sfm)}

%Pary elementów wzajemnie odwrotnych

%Wierzchołki kliki

%\filldraw[blue, thick] 
%\Aa circle(\v)
%\Ab circle(\v)
%\Ac circle(\v);

%pozostałe wierzchołki zewnętrznego sześciokąta
\filldraw[black, thick] 
\Ad circle(\v)
\Ae circle(\v)
\Af circle(\v);

%Oznaczenia wierzchołków kliki:

%\node[right] at  (\a*\R*\ca + 0.15,\a*\R*\sa)   {\footnotesize ${\bf{x}_{[2,4)}}$};
%\node[above] at  (\a*\R*\cb,\a*\R*\sb + 0.1)    {\footnotesize ${\bf{x}_{[1,2)}}$};
%\node[left]  at  (\a*\R*\cc - 0.15,\a*\R*\sc)   {\footnotesize ${\bf{x}_{[2,3)}}$};

%Oznaczenia sąsiadów kliki na zewnętrznym sześciokącie:

\node[left]  at  (\a*\R*\cd - 0.15,\a*\R*\sd)   {\footnotesize ${\bf{x}_{[1,3)}}$};
\node[below] at  (\a*\R*\ce,\a*\R*\se - 0.15)   {\footnotesize ${\bf{x}_{[3,4)}}$};
\node[right] at  (\a*\R*\cf + 0.15,\a*\R*\sf)   {\footnotesize ${\bf{x}_{[1,4)}}$};

%Oznaczenia sąsiadów kliki w drugim sześciokącie:

\node[right] at  (\b*\R*\ca + 0.15,\b*\R*\sa)   {\footnotesize ${\bf{y}_{[2,4)}}$};
\node[above] at  (\b*\R*\cb,\b*\R*\sb + 0.1)    {\footnotesize ${\bf{y}_{[1,2)}}$};
\node[left]  at  (\b*\R*\cc - 0.15,\b*\R*\sc)   {\footnotesize ${\bf{y}_{[2,3)}}$};

%Oznaczenia sąsiadów kliki w najmniejszym sześciokącie:

\node[right, below] at  (\c*\R*\cam + 1.2,\c*\R*\sam +.1)   {\footnotesize ${\bf{z}_{[2,4)}}$};
\node[left] at  (\c*\R*\cbm+.2,\c*\R*\sbm-.15)    {\footnotesize ${\bf{z}_{[1,2)}}$};
\node[below]  at  (\c*\R*\ccm,\c*\R*\scm)   {\footnotesize ${\bf{z}_{[2,3)}}$};

%Wierzchołki zewnętrzne

\filldraw[black, thick] 
\Ba circle(\v)
\Bb circle(\v)
\Bc circle(\v);
%\Bd circle(\v)
%\Be circle(\v)
%\Bf circle(\v);

%Sześciokąt zewnętrzny
%\draw[thick, blue] 
%\Aa -- \Ab -- \Ac -- \Aa;

%Sześciokąt środkowy
\draw[thick]
\Ad -- \Ae -- \Af -- \Ad
\Ba -- \Bb --\Bc -- \Ba;
%\Ba -- \Be
%\Ba -- \Bf
\draw[thick, red]
\Ba -- \Ca
\Bb -- \Cb
\Bc -- \Cc;

\filldraw[thick] 
\Ca circle(\v)
\Cb circle(\v)
\Cc circle(\v);
%\Cd circle(\v)
%\Ce circle(\v)
%\Cf circle(\v);

%Mały sześciokąta
\draw[thick]
\Ca -- \Cb -- \Cc -- \Ca;

%Podpis
\node at (0,-11.5) {\footnotesize {\rm  The graph $\N(\Q^*)$ for a dispersed clique} };
\node at (0,-13) {\footnotesize ($\Q^*=\{\mathbf{x}_{[1,2)},\mathbf{x}_{[2,3)},\mathbf{x}_{[2,4)} \}$,\  $G=C_2\times C_2$)};

\end{tikzpicture} 

\par
\smallskip

{\bf Fig.\,4.}

\end{center}

%-----------------------------------------------------------------------------------------------

Observe that the set of neighbours of any vertex $\x_{[k,s)}$ in   $I_m(x)$ can be decomposed as a union of two disjoint
$(m-1)$-element cliques. More precisely
\begin{equation}\label{neighbours_vertex}
V(\x_{[k,s)})\cap I_m(x)=\left( C(x,k)\setminus\{\x_{[k,s)}\}\right)\cup \left(C(x^{-1},s)\setminus\{\x_{[k,s)}\}\right).
\end{equation}

1. Let $m>3$ and $\Q$ be a maximum dispersed clique in $\G_m(G)$. By Lemma \ref{dispersed_clique} there exists $x\in G^\times$, such that $\Q^{*}$ is a subgraph of $\I_m(x)$, and there exists $j$, $1\leqslant j\leqslant m+1$ such that $Q^*=C(x,j)$. 

First suppose that $o(x)>2$. We will show  that $|N(\Q^{*})| = 2m(m-1)$. In fact, each vertex from $\Q^{*}$ has exactly $|G|+2m-4=3(m-1)$ neighbours in $\V_m(\e)$ by Proposition \ref{neighbours} (1) and different vertices from $\Q^{*}$ do not have common neighbours. Hence 
$$|N(\Q^{*})|=(3(m-1)-(m-1))\cdot m=2m(m-1).$$
It is easily seen that $N(\Q^{*})\cap I_m(x)=I_m(x)\setminus Q^{*}$. Actually, each vertex from $\I_m(x)$, in particular each vertex from $\Q^{*}$ has degree $2m-1$ in $\I_m(x)$, so  $$|N(\Q^{*})\cap I_m(x)| = |Q^{*}|\cdot ((2m-1)-(m-1))=m^2=m(m+1)-m=|I_m(x)|-|Q^{*}|.$$

Now we decompose $N(\Q^*)$ as a union of three disjoint subsets $A$, $B$, $C$ such that within each subset vertices have the same degree. Let 
$
A=C(x^{-1},j)
$ 
be the set of all inverses of elements of $Q^{*}$ and let $B=I_m(x)\setminus (Q^{*}\cup A)$. It is clear that $A\subset I_m(x)$ is an $m$-element dispersed clique disjoint with $Q^*$.
Finally, let 
$$C=\bigcup_{y\notin\{e,x,x^{-1}\}}(\{\y_{[i,j)}^{-1}\, |\, 1\leqslant i < j\}\cup\{\y_{[j,k)}\, |\, j<k\leqslant m+1\})=\bigcup_{y\notin\{e,x,x^{-1}\}}C(y,j)$$
that is $C$ is the set of neighbours of $Q^{*}$ outside $I_m(x)$. It is clear that $N(\Q^*)\subseteq A\cup B\cup C$. Moreover, as it follows from the definition, $|A|=m$, $|B| = m(m+1)-2m = m(m-1)$ and $|C| = m(|G|-3) = m(m-2)$; therefore according to earlier calculations $|A|+|B|+|C| = m+m(m-1)+m(m-2) = 2m(m-1).$ Consequently, $N(\Q^*)=A\cup B\cup C$.
\begin{center}
\begin{tikzpicture}[scale=0.3] % CENT
 \clip (-12.7,-10) rectangle (13,11);
%% some definitions 
\def\R{2.2} % sphere radius
\def\r{2}

%cosinusy kątów 18,54,90,126...
\def\ca{0.866}
\def\cb{0}
\def\cc{-0.866}
\def\cd{-0.866}
\def\ce{0}
\def\cf{0.866}

%sinusy kątów 18,54,90,126...
\def\sa{0.5}
\def\sb{1}
\def\sc{0.5}
\def\sd{-0.5}
\def\se{-1}
\def\sf{-0.5}

%cosinusy dla małego I(x)

\def\cam{0.5}
\def\cbm{-0.5}
\def\ccm{-1}
\def\cdm{-0.5}
\def\cem{0.5}
\def\cfm{1}

%sinusy dla małego I(x)
\def\sam{0.866}
\def\sbm{0.866}
\def\scm{0}
\def\sdm{-0.866}
\def\sem{-0.866}
\def\sfm{0}

% Parametr odległości wierzchołków zewnętrznych od początku układu
\def\a{4}
%Wierzchołki zewnętrzne A?=(\a*\R*cos(),\a*\R*sin) 

\def\Aa{(\a*\R*\ca,\a*\R*\sa)}
\def\Ab{(\a*\R*\cb,\a*\R*\sb)}
\def\Ac{(\a*\R*\cc,\a*\R*\sc)}
\def\Ad{(\a*\R*\cd,\a*\R*\sd)}
\def\Ae{(\a*\R*\ce,\a*\R*\se)}
\def\Af{(\a*\R*\cf,\a*\R*\sf)}

% Parametry odległości wierzchołków wewnętrznych od początku układu
\def\b{2.5}

%Współrzędne wierzchołki zewnętrzne A?=(\a*\R*cos(),\a*\R*sin) 

\def\Ba{(\b*\R*\ca,\b*\R*\sa)}
\def\Bb{(\b*\R*\cb,\b*\R*\sb)}
\def\Bc{(\b*\R*\cc,\b*\R*\sc)}
\def\Bd{(\b*\R*\cd,\b*\R*\sd)}
\def\Be{(\b*\R*\ce,\b*\R*\se)}
\def\Bf{(\b*\R*\cf,\b*\R*\sf)}

%Wielość wierzchołków
\def\v{5pt}

\def\c{1.2}

%Współrzędne wierzchołków małego sześciokąta 

\def\Ca{(\c*\R*\cam,\c*\R*\sam)}
\def\Cb{(\c*\R*\cbm,\c*\R*\sbm)}
\def\Cc{(\c*\R*\ccm,\c*\R*\scm)}
\def\Cd{(\c*\R*\cdm,\c*\R*\sdm)}
\def\Ce{(\c*\R*\cem,\c*\R*\sem)}
\def\Cf{(\c*\R*\cfm,\c*\R*\sfm)}

%Pozostałe wierzchołki zewnętrznejczęści I(x)
%\filldraw[black, thick] 
%\Ad circle(\v)
%\Ae circle(\v)
%\Af circle(\v);

%Oznaczenia wierzchołków kliki:
\node[right] at  (\a*\R*\ca + 0.15,\a*\R*\sa)   {\footnotesize ${\bf{x}_{[2,4)}}$};
\node[above] at  (\a*\R*\cb,\a*\R*\sb + 0.1)    {\footnotesize ${\bf{x}^{-1}_{[1,2)}}$};
\node[left]  at  (\a*\R*\cc - 0.15,\a*\R*\sc)   {\footnotesize ${\bf{x}_{[2,3)}}$};

%Oznaczenia pozostałych zewnętrznych wierzchołków
%\node[left]  at  (\a*\R*\cd - 0.15,\a*\R*\sd)   {\footnotesize ${\bf{x}_{[1,3)}}$};
%\node[below] at  (\a*\R*\ce,\a*\R*\se - 0.15)   {\footnotesize ${\bf{x}_{[3,4)}}$};
%\node[right] at  (\a*\R*\cf + 0.15,\a*\R*\sf)   {\footnotesize ${\bf{x}_{[1,4)}}$};

%Oznaczenia wierzchołków wew.
\node[right] at  (\b*\R*\ca,\b*\R*\sa - 0.5)   {\footnotesize ${\bf{x}^{-1}_{[2,4)}}$};
\node[right] at  (\b*\R*\cb,\b*\R*\sb + 0.5)    {\footnotesize ${\bf{x}_{[1,2)}}$};
\node[left]  at  (\b*\R*\cc,\b*\R*\sc -0.5)   {\footnotesize ${\bf{x}^{-1}_{[2,3)}}$};
%\node[right]  at  (\b*\R*\cd + 0.15,\b*\R*\sd)   {\footnotesize ${\bf{x}_{[1,3)}}$};
%\node[right] at  (\b*\R*\ce,\b*\R*\se-.5)   {\footnotesize ${\bf{x}^{-1}_{[3,4)}}$};
%\node[left, above] at  (\b*\R*\cf,\b*\R*\sf)   {\footnotesize ${\bf{x}_{[1,4)}}$};

\node[left] at  (\c*\R*\cdm,\c*\R*\sdm)   {\footnotesize ${\bf{x}^2_{[1,3)}}$};
\node[below] at  (\c*\R*\cem+1,\c*\R*\sem + 0.1)    {\footnotesize ${\bf{x}^2_{[3,4)}}$};
\node[right]  at  (\c*\R*\cfm,\c*\R*\sfm)   {\footnotesize ${\bf{x}^2_{[1,4)}}$};

%Klika
%\filldraw[thick] 
%\Ca circle(\v)
%\Cb circle(\v)
%\Cc circle(\v);

%Czerwone krawędzie sąsiadów kliki
\draw[thick] [color=red]  
\Aa -- \Ba
\Ab -- \Bb
\Ac -- \Bc;

%Krawędzie na zewnątrz
\draw[thick] 
\Aa -- \Ab -- \Ac -- \Aa;

%Krawędzie na drugim poziomie
\draw[thick] 
\Ba -- \Bb -- \Bc -- \Ba;

%Krawędzie na trzecim poziomie
\draw[blue, thick] 
\Cd -- \Ce -- \Cf -- \Cd;

%Sąsiedzi na zewnętrz
\filldraw[thick] 
\Aa circle(\v)
\Ab circle(\v)
\Ac circle(\v);

%Sąsiedzi kliki na drugim poziomie
\filldraw[black, thick] 
\Ba circle(\v)
\Bb circle(\v)
\Bc circle(\v);

% Sąsiedzi kliki na trzecim poziomie
\filldraw[blue, thick] 
\Cd circle(\v)
\Ce circle(\v)
\Cf circle(\v);

%Podpis
\node at (0,-6.5) {\footnotesize {\rm  The graph $\N(\Q^*)$ for a dispersed clique}};
\node at (0,-8.5) {\footnotesize ($Q^*=\{\mathbf{x}^2_{[1,2)},\mathbf{x}^2_{[2,3)},\mathbf{x}^2_{[2,4)} \}$,\  $G=C_4$)};
\end{tikzpicture} 
\hskip0.5cm
\begin{tikzpicture}[scale=0.3] % CENT
 \clip (-12.7,-10) rectangle (13,11);
%% some definitions 
\def\R{2.2} % sphere radius
\def\r{2}

%cosinusy kątów 18,54,90,126...
\def\ca{0.866}
\def\cb{0}
\def\cc{-0.866}
\def\cd{-0.866}
\def\ce{0}
\def\cf{0.866}

%sinusy kątów 18,54,90,126...
\def\sa{0.5}
\def\sb{1}
\def\sc{0.5}
\def\sd{-0.5}
\def\se{-1}
\def\sf{-0.5}

%cosinusy dla małego I(x)

\def\cam{0.5}
\def\cbm{-0.5}
\def\ccm{-1}
\def\cdm{-0.5}
\def\cem{0.5}
\def\cfm{1}

%sinusy dla małego I(x)
\def\sam{0.866}
\def\sbm{0.866}
\def\scm{0}
\def\sdm{-0.866}
\def\sem{-0.866}
\def\sfm{0}

% Parametr odległości wierzchołków zewnętrznych od początku układu
\def\a{4}
%Wierzchołki zewnętrzne A?=(\a*\R*cos(),\a*\R*sin) 

\def\Aa{(\a*\R*\ca,\a*\R*\sa)}
\def\Ab{(\a*\R*\cb,\a*\R*\sb)}
\def\Ac{(\a*\R*\cc,\a*\R*\sc)}
\def\Ad{(\a*\R*\cd,\a*\R*\sd)}
\def\Ae{(\a*\R*\ce,\a*\R*\se)}
\def\Af{(\a*\R*\cf,\a*\R*\sf)}

% Parametry odległości wierzchołków wewnętrznych od początku układu
\def\b{2.5}

%Współrzędne wierzchołki zewnętrzne A?=(\a*\R*cos(),\a*\R*sin) 

\def\Ba{(\b*\R*\ca,\b*\R*\sa)}
\def\Bb{(\b*\R*\cb,\b*\R*\sb)}
\def\Bc{(\b*\R*\cc,\b*\R*\sc)}
\def\Bd{(\b*\R*\cd,\b*\R*\sd)}
\def\Be{(\b*\R*\ce,\b*\R*\se)}
\def\Bf{(\b*\R*\cf,\b*\R*\sf)}

%Wielość wierzchołków
\def\v{5pt}

\def\c{1.2}

%Współrzędne wierzchołków małego sześciokąta 

\def\Ca{(\c*\R*\cam,\c*\R*\sam)}
\def\Cb{(\c*\R*\cbm,\c*\R*\sbm)}
\def\Cc{(\c*\R*\ccm,\c*\R*\scm)}
\def\Cd{(\c*\R*\cdm,\c*\R*\sdm)}
\def\Ce{(\c*\R*\cem,\c*\R*\sem)}
\def\Cf{(\c*\R*\cfm,\c*\R*\sfm)}

%Pozostałe wierzchołki zewnętrznejczęści I(x)
%\filldraw[black, thick] 
%\Ad circle(\v)
%\Ae circle(\v)
%\Af circle(\v);

%Oznaczenia wierzchołków kliki:
\node[right] at  (\a*\R*\ca + 0.15,\a*\R*\sa)   {\footnotesize ${\bf{x}_{[2,4)}}$};
\node[above] at  (\a*\R*\cb,\a*\R*\sb + 0.1)    {\footnotesize ${\bf{x}_{[1,2)}}$};
\node[left]  at  (\a*\R*\cc - 0.15,\a*\R*\sc)   {\footnotesize ${\bf{x}_{[2,3)}}$};

%Oznaczenia pozostałych zewnętrznych wierzchołków
%\node[left]  at  (\a*\R*\cd - 0.15,\a*\R*\sd)   {\footnotesize ${\bf{x}_{[1,3)}}$};
%\node[below] at  (\a*\R*\ce,\a*\R*\se - 0.15)   {\footnotesize ${\bf{x}_{[3,4)}}$};
%\node[right] at  (\a*\R*\cf + 0.15,\a*\R*\sf)   {\footnotesize ${\bf{x}_{[1,4)}}$};

%Oznaczenia wierzchołków wew.
\node[right] at  (\b*\R*\ca,\b*\R*\sa - 0.5)   {\footnotesize ${\bf{y}_{[2,4)}}$};
\node[right] at  (\b*\R*\cb,\b*\R*\sb + 0.5)    {\footnotesize ${\bf{y}_{[1,2)}}$};
\node[left]  at  (\b*\R*\cc,\b*\R*\sc - 0.5)   {\footnotesize ${\bf{y}_{[2,3)}}$};
%\node[right]  at  (\b*\R*\cd + 0.15,\b*\R*\sd)   {\footnotesize ${\bf{x}_{[1,3)}}$};
%\node[right] at  (\b*\R*\ce,\b*\R*\se-.5)   {\footnotesize ${\bf{x}^{-1}_{[3,4)}}$};
%\node[left, above] at  (\b*\R*\cf,\b*\R*\sf)   {\footnotesize ${\bf{x}_{[1,4)}}$};

\node[left] at  (\c*\R*\cdm,\c*\R*\sdm)   {\footnotesize ${\bf{z}_{[1,3)}}$};
\node[below] at  (\c*\R*\cem +1,\c*\R*\sem)    {\footnotesize ${\bf{z}_{[3,4)}}$};
\node[right]  at  (\c*\R*\cfm,\c*\R*\sfm)   {\footnotesize ${\bf{z}_{[1,4)}}$};

%Klika
%\filldraw[thick] 
%\Ca circle(\v)
%\Cb circle(\v)
%\Cc circle(\v);

%Czerwone krawędzie sąsiadów kliki
\draw[thick] [color=red]  
\Aa -- \Ba
\Ab -- \Bb
\Ac -- \Bc;

%Krawędzie na zewnątrz
\draw[thick] 
\Aa -- \Ab -- \Ac -- \Aa;

%Krawędzie na drugim poziomie
\draw[thick] 
\Ba -- \Bb -- \Bc -- \Ba;

%Krawędzie na trzecim poziomie
\draw[thick] 
\Cd -- \Ce -- \Cf -- \Cd;

%Sąsiedzi na zewnętrz
\filldraw[thick] 
\Aa circle(\v)
\Ab circle(\v)
\Ac circle(\v);

%Sąsiedzi kliki na drugim poziomie
\filldraw[black, thick] 
\Ba circle(\v)
\Bb circle(\v)
\Bc circle(\v);

% Sąsiedzi kliki na trzecim poziomie
\filldraw[thick] 
\Cd circle(\v)
\Ce circle(\v)
\Cf circle(\v);

%Podpis
\node at (0,-6.5) {\footnotesize {\rm  The graph $\N(\Q^*)$ for a dispersed clique}};
\node at (0,-8.5) {\footnotesize ($Q^*=\{\mathbf{z}_{[1,2)},\mathbf{z}_{[2,3)},\mathbf{z}_{[2,4)} \}$,\  $G=C_2\times C_2$)};
\end{tikzpicture} 

\par
\smallskip

{\bf Fig.\,5.}

\end{center}

Take a vertex $v\in A$. Then either $v=\x_{[l,j)}$ for some $l<j$ or $v=\x^{-1}_{[j,s)}$ for some $s>j$.
By \ref{neighbours_vertex} the set of neighbours of $v=\x_{[l,j)}$ in $I_m(x)\setminus \{\x_{[l,j)}^{-1}\}$
is equal to
$$
\left( C(x,l)\setminus\{\x_{[l,j)}\}\right)\cup \left(C(x^{-1},j)\setminus\{\x_{[l,j)}\}\right)=
\left( C(x,l)\setminus\{\x_{[l,j)}\}\right)\cup \left(A\setminus\{\x_{[l,j)}\}\right).
$$

Since any vertex from $C(x,l)\setminus\{\x_{[l,j)}\}$ is adjacent to some vertex from $Q^*$, we see that $v$
has $2m-2$ neighbours in $A\cup B$. Moreover, $v$ has $|G|-3=m-2$ neighbours in $C$, so the  degree of $v$ in $\N(\Q^{*})$ is equal to $3m-4$. In the case when $v=\x^{-1}_{[j,s)}$, the argument is identical.

If $v\in B$, then all its  neighbours (in $N(\Q^*)$) lie in $I_m(x)\setminus Q^{*}$. By Lemma \ref{dispersed_clique}(1) $v$ has only one neighbour in $Q^*$, so since the degree of $v$ in $\I_m(G)$ is equal to $2m-1$, the degree of $v$ in $\N(\Q^{*})$ is equal to $2m-2$. Finally, if $v\in C$, then  clearly $v\in C(y,j)$ for some $y$, $v$ has $m-1$ neighbours in $C(y,j)$ and $|G|-3 = m-2$ neighbours outside $C(y,j)$. Thus any vertex from $C$ has degree $2m-3$  in $\N(\Q^{*})$. Consequently, if $o(x)>2$, then the graph $\N(\Q^{*})$ is not regular. 

Suppose that $o(x)=2$. Then $N(\Q^*)=D\cup E$, where 
$$D=I_m(x)\setminus Q^*\  \  \ {\rm and}\  \  \  E=\bigcup_{y\not\in\{e,x\}}C(y,j).
$$
Observe that all neighbours of any vertex  $v\in D$ (in $N(\Q^*)$) are contained in $I_m(x)\setminus Q^{*}$. Moreover,
it is clear that if $v=x_{[s,t)}$, then $s\neq j$ and $t\neq j$. Thus $v$ has exactly two neighbours in $Q^*$: $\x_{[s,j)}$ (or $ \x_{[j,s)}$ if $j<s$)  and $\x_{[j,t)}$ (or  $\x_{[t,j)}$ if $t<j$). Since $v$ has degree $2m-2$ in $\I_m(x)$ (see Lemma \ref{I(x)}), the degree of $v$ in $N(\Q^*)$ is equal to $2m-4$. If $v\in E$, then $v\in C(y,j)$ for some $y\not\in\{e,x\}$, so $v$ has $m-1$ neighbours in $C(y,j)$. Then for any $z\in G\setminus \{e,x,y\}$ the vertex $v$ has exactly one neighbour in $C(z,j)$, so the degree of $v$ in $\N(\Q^{*})$ is equal to $m-1+(|G|-3)=2m-3.$ Therefore,
in this case the graph $\N(\Q^*)$ is also not regular.
\medskip

2. {\bf Fig.\,6} presents the neighbours graphs for $m=3$ and groups of order $4$, when $Q^*=\pr{1}{2}$.

\begin{center}
\begin{tikzpicture}[scale=0.3] % CENT
 \clip (-12.7,-10) rectangle (13,8);
%% some definitions 
\def\R{2.2} % sphere radius
\def\r{2}

%cosinusy kątów 18,54,90,126...
\def\ca{0.866}
\def\cb{0}
\def\cc{-0.866}
\def\cd{-0.866}
\def\ce{0}
\def\cf{0.866}

%sinusy kątów 18,54,90,126...
\def\sa{0.5}
\def\sb{1}
\def\sc{0.5}
\def\sd{-0.5}
\def\se{-1}
\def\sf{-0.5}

%cosinusy dla małego I(x)

\def\cam{0.5}
\def\cbm{-0.5}
\def\ccm{-1}
\def\cdm{-0.5}
\def\cem{0.5}
\def\cfm{1}

%sinusy dla małego I(x)
\def\sam{0.866}
\def\sbm{0.866}
\def\scm{0}
\def\sdm{-0.866}
\def\sem{-0.866}
\def\sfm{0}

% Parametr odległości wierzchołków zewnętrznych od początku układu
\def\a{4}
%Wierzchołki zewnętrzne A?=(\a*\R*cos(),\a*\R*sin) 

\def\Aa{(\a*\R*\ca,\a*\R*\sa)}
\def\Ab{(\a*\R*\cb,\a*\R*\sb)}
\def\Ac{(\a*\R*\cc,\a*\R*\sc)}
\def\Ad{(\a*\R*\cd,\a*\R*\sd)}
\def\Ae{(\a*\R*\ce,\a*\R*\se)}
\def\Af{(\a*\R*\cf,\a*\R*\sf)}

% Parametry odległości wierzchołków wewnętrznych od początku układu
\def\b{2.5}

%Współrzędne wierzchołki zewnętrzne A?=(\a*\R*cos(),\a*\R*sin) 

\def\Ba{(\b*\R*\ca,\b*\R*\sa)}
\def\Bb{(\b*\R*\cb,\b*\R*\sb)}
\def\Bc{(\b*\R*\cc,\b*\R*\sc)}
\def\Bd{(\b*\R*\cd,\b*\R*\sd)}
\def\Be{(\b*\R*\ce,\b*\R*\se)}
\def\Bf{(\b*\R*\cf,\b*\R*\sf)}

%Wielość wierzchołków
\def\v{5pt}

\def\c{1.2}

%Współrzędne wierzchołków małego sześciokąta 

\def\Ca{(\c*\R*\cam,\c*\R*\sam)}
\def\Cb{(\c*\R*\cbm,\c*\R*\sbm)}
\def\Cc{(\c*\R*\ccm,\c*\R*\scm)}
\def\Cd{(\c*\R*\cdm,\c*\R*\sdm)}
\def\Ce{(\c*\R*\cem,\c*\R*\sem)}
\def\Cf{(\c*\R*\cfm,\c*\R*\sfm)}

%Pary elementów wzajemnie odwrotnych

%Wierzchołki kliki
%\filldraw[blue, thick] 
%\Ab circle(\v)
%\Bb circle(\v)
%\Cb circle(\v);

%Sąsiedzi kliki (zewnętrze)
\filldraw[black, thick] 
\Aa circle(\v)
\Ac circle(\v)
\Ad circle(\v)
\Af circle(\v);

%Oznaczenia wierzchołków zewnętrznych:

\node[right] at  (\a*\R*\ca + 0.15,\a*\R*\sa)   {\footnotesize ${\bf{x}_{[2,4)}}$};
%\node[above] at  (\a*\R*\cb,\a*\R*\sb + 0.1)    {\footnotesize ${\bf{x}^{-1}_{[1,2)}}$};
\node[left]  at  (\a*\R*\cc - 0.15,\a*\R*\sc)   {\footnotesize ${\bf{x}_{[2,3)}}$};
\node[left]  at  (\a*\R*\cd - 0.15,\a*\R*\sd)   {\footnotesize ${\bf{x}_{[1,3)}}$};
\node[right] at  (\a*\R*\cf + 0.15,\a*\R*\sf)   {\footnotesize ${\bf{x}_{[1,4)}}$};

%Oznaczenia pozostałych wierzchołków z I(x) 
\node[right] at  (\b*\R*\ca - 0.15,\b*\R*\sa-0.15)   {\tiny ${\bf{x}^{-1}_{[2,4)}}$};
%\node[above] at  (\b*\R*\cb,\b*\R*\sb + 0.1)    {\tiny ${\bf{x}_{[1,2)}}$};
\node[above]  at  (\b*\R*\cc + 0.3,\b*\R*\sc - 0.2)   {\tiny ${\bf{x}^{-1}_{[2,3)}}$};
\node[left]  at  (\b*\R*\cd + 0.5,\b*\R*\sd+0.8)   {\tiny ${\bf{x}^{-1}_{[1,3)}}$};
\node[below] at  (\b*\R*\cf,\b*\R*\sf +0.2)   {\tiny ${\bf{x}^{-1}_{[1,4)}}$};

%Oznaczenia wierzchołków z I(x^2) 
\node[right] at  (\c*\R*\cam,\c*\R*\sam - 0.4)   {\tiny ${\bf{x}^{2}_{[2,4)}}$};
%\node[above] at  (\c*\R*\cbm,\c*\R*\sbm + 0.1)    {\tiny ${\bf{x}^2_{[1,2)}}$};
\node[right]  at  (\c*\R*\ccm,\c*\R*\scm - 0.2)   {\tiny ${\bf{x}^{2}_{[2,3)}}$};
\node[right]  at  (\c*\R*\cdm,\c*\R*\sdm)   {\tiny ${\bf{x}^{2}_{[1,3)}}$};
\node[right] at  (\c*\R*\cfm - 0.3,\c*\R*\sfm +0.4)   {\tiny ${\bf{x}^{2}_{[1,4)}}$};

%Wierzchołki zewnętrzne

\filldraw[black, thick] 
\Ba circle(\v)
\Bc circle(\v)
\Bd circle(\v)
\Bf circle(\v);

%Krawędzie czerwone
\draw[thick, red] 
\Aa -- \Ba -- \Ca -- \Aa
\Ac -- \Bc -- \Cc -- \Ac
\Ad -- \Bd -- \Cd -- \Ad
\Af -- \Bf -- \Cf -- \Af;

\draw[thick] 
\Aa -- \Af
\Ac -- \Ad
\Ad -- \Af
\Cd -- \Cf
\Ba -- \Bf
\Bc -- \Bd
\Bd -- \Bf
\Ca -- \Cf
\Cc -- \Cd;
%Pozostałe cykle 
\draw[thick]   
\Aa -- \Ac 
\Ba -- \Bc 
\Ca -- \Cc;

\filldraw[blue, thick] 
\Ca circle(\v)
\Cc circle(\v)
\Cd circle(\v)
\Cf circle(\v);

%cykl małego sześciokąta
\draw[thick]

\Ca -- \Cc;
%mała gwiazda

%Podpis
\node at (0,-6.5) {\footnotesize {\rm  The graph $\N(\Q^*)$  of an interval clique} };
\node at (0,-8) {\footnotesize ($\mathcal{Q}^*=\{\mathbf{x}_{[1,2)},\mathbf{x}^{-1}_{[1,2)},\mathbf{x}^{2}_{[1,2)} \}$,\  $G=C_4$)};

\end{tikzpicture} 
\hskip0.5cm
\begin{tikzpicture}[scale=0.3] % CENT
 \clip (-12.7,-10) rectangle (13,8);
%% some definitions 
\def\R{2.2} % sphere radius
\def\r{2}

%cosinusy kątów 18,54,90,126...
\def\ca{0.866}
\def\cb{0}
\def\cc{-0.866}
\def\cd{-0.866}
\def\ce{0}
\def\cf{0.866}

%sinusy kątów 18,54,90,126...
\def\sa{0.5}
\def\sb{1}
\def\sc{0.5}
\def\sd{-0.5}
\def\se{-1}
\def\sf{-0.5}

%cosinusy dla małego I(x)

\def\cam{0.5}
\def\cbm{-0.5}
\def\ccm{-1}
\def\cdm{-0.5}
\def\cem{0.5}
\def\cfm{1}

%sinusy dla małego I(x)
\def\sam{0.866}
\def\sbm{0.866}
\def\scm{0}
\def\sdm{-0.866}
\def\sem{-0.866}
\def\sfm{0}

% Parametr odległości wierzchołków zewnętrznych od początku układu
\def\a{4}
%Wierzchołki zewnętrzne A?=(\a*\R*cos(),\a*\R*sin) 

\def\Aa{(\a*\R*\ca,\a*\R*\sa)}
\def\Ab{(\a*\R*\cb,\a*\R*\sb)}
\def\Ac{(\a*\R*\cc,\a*\R*\sc)}
\def\Ad{(\a*\R*\cd,\a*\R*\sd)}
\def\Ae{(\a*\R*\ce,\a*\R*\se)}
\def\Af{(\a*\R*\cf,\a*\R*\sf)}

% Parametry odległości wierzchołków wewnętrznych od początku układu
\def\b{2.5}

%Współrzędne wierzchołki zewnętrzne A?=(\a*\R*cos(),\a*\R*sin) 

\def\Ba{(\b*\R*\ca,\b*\R*\sa)}
\def\Bb{(\b*\R*\cb,\b*\R*\sb)}
\def\Bc{(\b*\R*\cc,\b*\R*\sc)}
\def\Bd{(\b*\R*\cd,\b*\R*\sd)}
\def\Be{(\b*\R*\ce,\b*\R*\se)}
\def\Bf{(\b*\R*\cf,\b*\R*\sf)}

%Wielość wierzchołków
\def\v{5pt}

\def\c{1.2}

%Współrzędne wierzchołków małego sześciokąta 

\def\Ca{(\c*\R*\cam,\c*\R*\sam)}
\def\Cb{(\c*\R*\cbm,\c*\R*\sbm)}
\def\Cc{(\c*\R*\ccm,\c*\R*\scm)}
\def\Cd{(\c*\R*\cdm,\c*\R*\sdm)}
\def\Ce{(\c*\R*\cem,\c*\R*\sem)}
\def\Cf{(\c*\R*\cfm,\c*\R*\sfm)}

%Pary elementów wzajemnie odwrotnych

\draw[thick, red]  
\Aa -- \Ba -- \Ca -- \Aa
\Ac -- \Bc -- \Cc -- \Ac
\Ad -- \Bd -- \Cd -- \Ad
\Af -- \Bf -- \Cf -- \Af;

%Wierzchołki zewnętrzne

\filldraw[black, thick] 
\Aa circle(\v)
\Ac circle(\v)
\Ad circle(\v)
\Af circle(\v);

%Oznaczenia wierzchołków zewnętrznych:

\node[right] at  (\a*\R*\ca + 0.15,\a*\R*\sa)   {\footnotesize ${\bf{x}_{[2,4)}}$};
%\node[above] at  (\a*\R*\cb,\a*\R*\sb + 0.1)    {\footnotesize ${\bf{x}_{[1,2)}}$};
\node[left]  at  (\a*\R*\cc - 0.15,\a*\R*\sc)   {\footnotesize ${\bf{x}_{[2,3)}}$};
\node[left]  at  (\a*\R*\cd - 0.15,\a*\R*\sd)   {\footnotesize ${\bf{x}_{[1,3)}}$};
\node[right] at  (\a*\R*\cf + 0.15,\a*\R*\sf)   {\footnotesize ${\bf{x}_{[1,4)}}$};

%Oznaczenia pozostałych wierzchołków z I(x) 
\node[right] at  (\b*\R*\ca - 0.15,\b*\R*\sa-0.15)  {\tiny ${\bf{y}_{[2,4)}}$};
%\node[above] at  (\b*\R*\cb,\b*\R*\sb + 0.1)    	{\tiny ${\bf{y}_{[1,2)}}$};
\node[above] at  (\b*\R*\cc + 0.3,\b*\R*\sc - 0.2)  {\tiny ${\bf{y}_{[2,3)}}$};
\node[left]  at  (\b*\R*\cd + 0.5,\b*\R*\sd+0.2)   	{\tiny ${\bf{y}_{[1,3)}}$};
\node[below] at  (\b*\R*\cf,\b*\R*\sf +0.2)   		{\tiny ${\bf{y}_{[1,4)}}$};

%Oznaczenia wierzchołków z I(x^2) 
\node[right] at  (\c*\R*\cam,\c*\R*\sam - 0.4)   {\tiny ${\bf{z}_{[2,4)}}$};
%\node[above] at  (\c*\R*\cbm,\c*\R*\sbm + 0.1)    {\tiny ${\bf{z}_{[1,2)}}$};
\node[right]  at  (\c*\R*\ccm,\c*\R*\scm - 0.2)   {\tiny ${\bf{z}_{[2,3)}}$};
\node[right]  at  (\c*\R*\cdm,\c*\R*\sdm)   {\tiny ${\bf{z}_{[1,3)}}$};
\node[right] at  (\c*\R*\cfm - 0.3,\c*\R*\sfm)   {\tiny ${\bf{z}_{[1,4)}}$};

%Wierzchołki zewnętrzne

\filldraw[black, thick] 
\Ba circle(\v)
\Bc circle(\v)
\Bd circle(\v)
\Bf circle(\v);

%Sześciokąt zewnętrzny
\draw[thick] 
\Aa -- \Ac -- \Ad -- \Af -- \Aa;

%Sześciokąt środkowy
\draw[thick]
\Ba -- \Bc -- \Bd -- \Bf -- \Ba;

\filldraw[thick] 
\Ca circle(\v)
\Cc circle(\v)
\Cd circle(\v)
\Cf circle(\v);

%Mały sześciokąta
\draw[thick]
\Ca -- \Cc -- \Cd -- \Cf -- \Ca;

%Podpis
\node at (0,-6.5) {\footnotesize {\rm  The graph $\N(\Q^*)$  of an interval clique} };
\node at (0,-8) {\footnotesize ($Q^*=\{\mathbf{x}_{[1,2)},\mathbf{y}_{[1,2)},\mathbf{z}_{[1,2)} \}$,\  $G=C_2\times C_2$)};

\end{tikzpicture} 
\par
\smallskip

{\bf Fig.\,6.}
\end{center}

Suppose that $m>3$ and $\Q$ is an interval clique, i.e. $Q^*=\pr{k}{l}$, where $1\leqslant k<l\leqslant m+1$. Take $x\in G^\times$ and $\x_{[k,l)}\in \pr{k}{l}$. Suppose that $o(x)>2$. According to \ref{neighbours_vertex}  the set of neighbours of $\x_{[k,l)}$ in $I_m(x)$, 
is the sum of two disjoint $(m-1)$-element cliques, that is:
$$
V(\x_{[k,l)})\cap I_m(x)=\left( C(x,k)\setminus\{\x_{[k,l)}\}\right)\cup \left(C(x^{-1},l)\setminus\{\x_{[k,l)}\}\right).
$$
By the same reason we can write
$$
V(\x^{-1}_{[k,l)})\cap I_m(x)=\left( C(x^{-1},k)\setminus\{\x^{-1}_{[k,l)}\}\right)\cup \left(C(x,l)\setminus\{\x^{-1}_{[k,l)}\}\right).
$$
We will show that any neighbour $v$  of $\Q^*$ lying in $I_m(x)$ has $m$ neighbours in the set $N(\Q^*)\cap I_m(x)$.
Notice that $v$ has one of the following four forms: $\x^{-1}_{[i,k)}$, $\x_{[k,j)}$, $\x_{[i,l)}$ or $\x^{-1}_{[l,j)}$.
Suppose that $v=\x^{-1}_{[i,k)}$. Then $v\in C(x,k)$, so $v$ has $m-2$ neighbours in $C(x,k)\setminus\{\x_{[k,l)}\}$ and has no neighbours in $C(x^{-1},l)\setminus\{\x_{[k,l)}\}$.
Notice $v$ has two more neighbours in two remaining cliques. Namely, we have $v^{-1}\sim v$, $v^{-1}=\x_{[i,k)}\in C(x^{-1},k)$, $\x^{-1}_{[i,l)} \sim \x^{-1}_{[i,k)}=v$ and 
$\x^{-1}_{[i,l)}\in C(x,l)\setminus\{\x^{-1}_{[k,l)}\}$.  Consequently, $v$ has $m$ neighbours in the set $N(\Q^*)\cap I_m(x)$. Analogous calculations for vertices $v$ of three other forms give the same result.

If $o(x)=2$, then
$$
V(\x_{[k,l)})\cap I_m(x)=\left( C(x,k)\setminus\{\x_{[k,l)}\}\right)\cup \left(C(x,l)\setminus\{\x_{[k,l)}\}\right).
$$
Notice that each element of $ C(x,k)\setminus\{\x_{[k,l)}\}$ has exactly one neighbour in $C(x,l)\setminus\{\x_{[k,l)}\}$ (in particular if $s<k$, then $x_{[s,k)}\sim x_{[s,l)}$). Thus if $v$ is a neighbour of $\Q^*$ lying in $I_m(x)$, then $v$ belongs to exactly one of the  above two cliques and has exactly one neighbour in the second clique. This shows that $v$ has $m-1$ neighbours in the set $N(\Q^*)\cap I_m(x)$. 

Observe that if $v=\x^{\pm 1}_{[s,t)} \in \pr{s}{t}\cap N(\Q^*)$, then the neighbours of $v$ lying outside $I_m(x)$ are of the form $\y_{[s,t)}$, where $y\in G\setminus \{e,x,x^{-1}\}$. Thus if $o(x)>2$, then $v$ has $m+(|G|-3)=2m-2$ neighbours in $\N(\Q^*)$ and if $o(x)=2$, then
the degree of $v$ in $\N(\Q^*)$ is equal to $m-1+(|G|-2)=2m-2$. Consequently, the graph $\N(\Q^*)$ is reqular of degree $2m-2$.
\end{proof}
%----------------------------------------------------------------------------------
%----------------------------------------------------------------------------------

\begin{cor}\label{aut_int_clique}
Let $F\colon \G_m(G)\to \G_m(H)$ be an isomorphism of graphs such that $F(\e_G)=\e_H$, where $G$ and $H$ are finite groups 	 and let $\Q$ be a clique in $\G_m(G)$. Then $F(\Q)$ is a maximum interval (resp. maximum dispersed) clique in $\G_m(H)$ if and only if $\Q$ is a maximum interval (resp. maximum dispersed) clique in $\G_m(G)$, with the exception of the case where $|G|=3$ and $m=2$. In particular, if $(m,|G|)\neq (2,3)$, then any automorphism of the graph $\G_m(G)$ fixing $\e$ preserves the type of a maximum clique.
\end{cor}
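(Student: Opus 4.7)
The plan is to show that, because $F$ preserves the adjacency structure of $\G_m(G)$, it must preserve the invariants that separate interval cliques from dispersed cliques---first the cardinality, and when that fails, the regularity of the neighbourhood graph from Proposition~\ref{inv_type}. The preliminaries I would record immediately are: $|G|=|H|$ (from $|G|^m=|H|^m$), $F$ preserves clique cardinalities, and since $F(\e_G)=\e_H$, cliques through $\e_G$ are sent to cliques through $\e_H$, so the image is again either an interval or a dispersed clique. Recall that by Lemma~\ref{interval_clique} a maximum interval clique has $|G|$ vertices, and by Lemma~\ref{dispersed_clique} together with Proposition~\ref{maximum_clique} a maximum dispersed clique has $m+1$ vertices, except in the sporadic case $(m,|G|)=(2,2)$ where it has $4$.

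I would then split the argument according to whether $|G|=m+1$. If $|G|\neq m+1$, then the maximum interval clique size $|G|$ differs from the maximum dispersed clique size ($m+1$, or $4$ in the $(2,2)$ subcase), so $F$ distinguishes the two types simply by cardinality. The interesting case is $|G|=m+1>3$, when both types have the same size $m+1$; here I would invoke Proposition~\ref{inv_type}: for a maximum interval clique $\Q$ the graph $\N(\Q^*)$ is $(2m-2)$-regular, while for a maximum dispersed clique $\N(\Q^*)$ is not regular. Since $F$ restricts to a graph isomorphism $\N(\Q^*)\to\N(F(\Q)^*)$, regularity is preserved, and with it the clique type. The lone remaining possibility $|G|=m+1=3$ is $(m,|G|)=(2,3)$, which is exactly the stated exception.

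The ``in particular'' conclusion will then follow by taking $H=G$ and $F$ an arbitrary automorphism of $\G_m(G)$ fixing $\e$. The only delicate point to verify is that $F$ genuinely restricts to a graph isomorphism $\N(\Q^*)\to\N(F(\Q)^*)$---but this is immediate, since a vertex outside $\Q$ is a neighbour of $\Q^*$ iff it is adjacent to some element of $\Q^*$, a condition preserved by any graph isomorphism, and adjacencies between such vertices are likewise preserved. All the substantive combinatorial work has already been carried out in Propositions~\ref{maximum_clique} and~\ref{inv_type} together with Lemmas~\ref{interval_clique} and~\ref{dispersed_clique}, so the plan reduces to this clean case analysis, and no additional technical obstacle is expected.
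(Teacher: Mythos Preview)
Your overall strategy matches the paper's, and for the cases $|G|>m+1$ and $|G|=m+1>3$ your argument and the paper's coincide. The difference---and a genuine gap---is in the case $|G|<m+1$.

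There your cardinality argument only shows that if $\Q$ is a maximum interval clique (of size $|G|$) then $F(\Q)$ cannot be a \emph{maximum} dispersed clique (which has $m+1>|G|$ vertices). But the corollary asks you to conclude that $F(\Q)$ \emph{is} a maximum interval clique, and for this you must also exclude the possibility that $F(\Q)$ is a \emph{maximal but non-maximum} dispersed clique of size $|G|$. Such cliques exist: for any involution $x$ and any $i<j<k$, the set $\{\e_H,\x_{[i,j)},\x_{[i,k)},\x_{[j,k)}\}$ is a maximal dispersed $4$-clique (these are the ``triangle'' families from Lemma~\ref{dispersed_clique}(3), and they remain maximal for all $m\geqslant 3$). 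So when $|G|=4<m+1$ your argument does not force $F(\Q)$ to be interval. The paper closes this case by a different route: since for $|G|<m+1$ the maximum cliques are precisely the dispersed $(m{+}1)$-cliques and each of these has $Q^*\subseteq I_m(h)$ for a single $h$, the isomorphism $F$ induces a bijection between the families $\{I_m(g):g\in G^\times\}$ and $\{I_m(h):h\in H^\times\}$; a maximum interval clique $\{\e\}\cup\pr{k}{l}$ meets every $I_m(g)$, so its image cannot lie inside one $I_m(h)$ and therefore cannot be dispersed at all.
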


%----------------------------------------------------------------------------------

\begin{proof}  Suppose that $(m,|G|)\neq (2,3)$. It is clear that if the graphs $\G_m(G)$ and $\G_m(H)$ are isomorphic then groups $G$ and $H$ have the same orders.
If $|G|>m+1$, then the assertion follows from the fact that maximum interval cliques are maximum cliques of $\G_m(G)$ and there are not other maximum cliques by Proposition \ref{maximum_clique}. The case $|G|=m+1>3$ follows immediately by Proposition \ref{inv_type}. 

Finally, if we assume that $|G|=|H|<m+1$, then obviously each isomorphism of $F\colon\G_m(G)\to \G_m(H)$ such that $F(\e_G)=\e_H$ induces a bijection between maximum cliques, and in particular $F$ induces  a bijection  between
sets sets $\{I_m(g)\mid g\in G^\times\}$ and $\{I_m(h)\mid h\in H^\times\}$. Therefore, none maximum interval clique can be send to a clique contained in some set $I_m(h)$, that is, $F$ must also preserve the type of maximum interval cliques. 
\end{proof}
\bigskip

We finish this section with the following example.
\begin{center}
\begin{tikzpicture}[scale=1.0]
%wierzchołki z I_2(x)
\node at (60:3/2.1) {\footnotesize $\bullet$};
\node at (120:3/2.1) {\footnotesize $\bullet$};
\node at (180:3/2.1) {\footnotesize $\bullet$};
\node at (240:3/2.1) {\footnotesize $\bullet$};
\node at (300:3/2.1) {\footnotesize $\bullet$};
\node at (360:3/2.1) {\footnotesize $\bullet$};

%pozostałe wierzchołki z G_2(C_3)
\node at (30:3.6/2.1) {\footnotesize $\bullet$};
\node at (150:3.6/2.1) {\footnotesize $\bullet$};
\node at (270:3.6/2.1) {\footnotesize $\bullet$};

\node[above right] at (60:3/2.1) {\tiny $(x,e)$};
\node[above left] at (120:3/2.1) {\tiny $(x,x)$};
\node[left] at (180:3/2.1) {\tiny $(e,x)$};
\node[below left] at (240:3/2.1) {\tiny $(x^{-1},e)$};
\node[below right] at (300:3/2.1) {\tiny $(x^{-1},x^{-1})$};
\node[right] at (360:3/2.1) {\tiny $(e,x^{-1})$};

\node[above right] at (30:3.6/2.1) {\tiny $(x,x^{-1})$};
\node[above left] at (150:3.6/2.1) {\tiny $(x^{-1},x)$};
\node[below] at (270:3.6/2.1) {\tiny $(e,e)$};

\draw[thin, dashed] (0,0) circle(1.55); 

%Krawędzie grafu \I_2(x)
\draw[] (60:3/2.1)--(120:3/2.1)--(180:3/2.1)--(240:3/2.1)--(300:3/2.1)--(360:3/2.1)--(60:3/2.1);
\draw[] (60:3/2.1)--(240:3/2.1) 
		(180:3/2.1)--(360:3/2.1)
		(120:3/2.1)--(300:3/2.1);

%Pozostałe krawędzie grafu
\draw[very thin] (30:3.6/2.1)--(60:3/2.1)
			(30:3.6/2.1)--(120:3/2.1)
			(30:3.6/2.1)--(180:3/2.1)
			(30:3.6/2.1)--(240:3/2.1)
			(30:3.6/2.1)--(300:3/2.1)
			(30:3.6/2.1)--(360:3/2.1);

\draw[very thin] (150:3.6/2.1)--(60:3/2.1)
			(150:3.6/2.1)--(120:3/2.1)
			(150:3.6/2.1)--(180:3/2.1)
			(150:3.6/2.1)--(240:3/2.1)
			(150:3.6/2.1)--(300:3/2.1)
			(150:3.6/2.1)--(360:3/2.1);

\draw[very thin] (270:3.6/2.1)--(60:3/2.1)
			(270:3.6/2.1)--(120:3/2.1)
			(270:3.6/2.1)--(180:3/2.1)
			(270:3.6/2.1)--(240:3/2.1)
			(270:3.6/2.1)--(300:3/2.1)
			(270:3.6/2.1)--(360:3/2.1);

\node at (270:6/2.1) {\footnotesize{\rm  The graph}\  $\G_2(C_3)$};
\end{tikzpicture}\hskip1cm
\begin{tikzpicture}[scale=1.0]
%wierzchołki z I_2(x)
\node at (60:3/2.1) {\footnotesize $\bullet$};
\node at (120:3/2.1) {\footnotesize $\bullet$};
\node at (180:3/2.1) {\footnotesize $\bullet$};
\node at (240:3/2.1) {\footnotesize $\bullet$};
\node at (300:3/2.1) {\footnotesize $\bullet$};
\node at (360:3/2.1) {\footnotesize $\bullet$};

%pozostałe wierzchołki z G_2(C_3)
\node at (30:3.6/2.1) {\footnotesize $\bullet$};
\node at (150:3.6/2.1) {\footnotesize $\bullet$};
\node at (270:3.6/2.1) {\footnotesize $\bullet$};

\node[above right] at (60:3/2.1) {\tiny $(x,e)$};
\node[above left] at (120:3/2.1) {\tiny $(x,x)$};
\node[left] at (180:3/2.1) {\tiny $(e,x)$};
\node[below left] at (240:3/2.1) {\tiny $(x^{-1},e)$};
\node[below right] at (300:3/2.1) {\tiny $(x^{-1},x^{-1})$};
\node[right] at (360:3/2.1) {\tiny $(e,x^{-1})$};

\node[above right] at (30:3.6/2.1) {\tiny $(x,x^{-1})$};
\node[above left] at (150:3.6/2.1) {\tiny $(x^{-1},x)$};
\node[below] at (270:3.6/2.1) {\tiny $(e,e)$};

\draw[thin, dashed] (0,0) circle(1.55); 

%Krawędzie dop grafu \I_2(x)
\draw[] (60:3/2.1)--(180:3/2.1)--(300:3/2.1)--(60:3/2.1);
\draw[] (360:3/2.1)--(120:3/2.1)--(240:3/2.1)--(360:3/2.1);

%Pozostałe krawędzie dop grafu
\draw[very thin] (30:3.6/2.1)--(150:3.6/2.1)--(270:3.6/2.1)--(30:3.6/2.1);

\node at (270:6/2.1) {\footnotesize {\rm  The graph}\  $\overline{\G_2(C_3)}$};
\end{tikzpicture}\hskip1cm

{\bf Fig.\,7.}
\end{center}

\begin{example}\label{m=2}{\rm 
Let $G=C_3=\{e,x,x^2\}$ and $m=2$. The graph $\G_2(C_3)$ is presented on the left of {\bf Fig.\,7} with the subgraph $\I_2(x)$ inside the dashed circle. 

\noindent Looking at the complement of $\G_2(C_3)$ on the right of {\bf Fig.\,7} one can easily see that the transposition interchanging vertices $(x,e)$ and $(x^{-1},x^{-1})$
(with all other vertices fixed) is an automorphism of $\G_2(C_3)$ sending the interval clique
$\{(e,e),(x,x),(x^{-1},x^{-1})\}$ onto a dispersed clique $\{(e,e),(x,x),(x,e)\}$.}
\end{example}

\medskip

%----------------------------------------------------------------------------------
%----------------------------------------------------------------------------------
%----------------------------------------------------------------------------------

\section{Homogeneous homomorphisms}

Recall that if $\X$ and $\Y$ are graphs with the sets of vertices $X$ and $Y$ respectively, then the map $F\colon
X\to Y$ is {\em a homomorphism}
if $F(x)$ and $F(y)$ are adjacent in $\Y$ whenever $x$ and $y$ are adjacent in $\X$.
When $\X$ and $\Y$ have no loops, which is our usual case, this definition
implies that if $x\sim y$, then $F(x) \neq F(y)$. If $F$ is a homomorphism between $\X$ and $\Y$ we will write $F\colon \X\to \Y$, even though it is really a function between $X$ and $Y$.

\medskip

Let $G$ and $H$ be groups with the identity elements $\e_G$ and $\e_H$ respectively. We will describe homomorphisms between graphs $\G_m(G)$ and $\G_m(H)$ which preserve intervals.
A  homomorphism of graphs $F\colon \G_m(G)\to \G_m(H)$ is said to be {\em homogeneous} if 
$$F(\e_G)=\e_H\  \  {\rm and}\  \   
F(\pr{k}{l})\subseteq \prh{k}{l}\  \  {\rm for\  all}\   1\leqslant k<l\leqslant m+1.
$$
For simplicity we denote the identities of $G$ and $H$ using the same symbol $e$.

Let $_iG^k$ denote the subproduct of $G^m$ of the form
$$
\underbrace{\{e\}\times\dots\times\{e\}}_{i \   \   {\rm  times }}\times \underbrace{G\times\dots\times G}_{k \   \   {\rm  times }}\times \underbrace{\{e\}\times\dots\times \{e\}}_{m-i-k \   \   {\rm  times } }.
$$ 
The elements of $_iG^k$ we will denote in the following shortened form:
$$
{_i(g_1,\dots,g_k)}=(\underbrace{e,e,\dots,e}_{i\   \   {\rm  times}},g_1,g_2,\dots,g_k,\underbrace{e,e,\dots,e}_{m-i-k\   \   {\rm  times}}). 
$$
In what follows we will use also the following condition characterizing adjacency of vertices:
\begin{equation}\label{x-y}
\x\sim\y \ \ {\rm if\ \ and\ \ only \ \ if}\ \ \x\y^{-1}\in \Ss \ \ {\rm if\ \ and\  \ only\ \ if}\ \ \y\x^{-1}\in \Ss.
\end{equation}

%----------------------------------------------------------------------------------
%----------------------------------------------------------------------------------

\begin{lem}\label{hom}
	Let $G$ be a group and $m\geqslant 4$.
	\begin{enumerate} 
	\item Let $\x={_i(g_1,g_2,\dots,g_k,e)}$, $\y={_{i}(e,g_2,\dots\dots,g_k,g_{k+1})}\in G^m$ be such that $g_1\neq e$, and $g_{k+1}\neq e$, where $k\geqslant 2$. Suppose that the element  $\ga=(a_1,a_2,\dots,a_m)\in G^m$ is such that $a_j\neq e$ for some $j\in \{1,2,\dots,i\}\cup\{i+k+2,\dots,m\}$. Then either $\x\not\sim\ga$ or $\y\not\sim\ga$ in $\G_m(G)$.
	\item Let $\x={_i(x_1,x_2,\dots,x_k)}\in {_iG^k}$ with $x_1, x_k\in G^\times$ satisfies in $\G_m(G)$:
	$$
	\x\sim{_i(a_1,a_2,\dots,a_{k-1},e)} \   \   {\rm  and }\  \x\sim {_i(e,a_2,\dots,a_{k-1},a_k)}
	$$ 
	for some $a_1, a_k\in G^\times$ and $a_2,\dots,a_{k-1}\in G$, where $k>2$. 
	
	Then either $\x={_i(a_1,a_2,\dots,a_k)}$ or $a_1=a_k$ and $\x={_i(a_1,a_1a_2,\dots,a_1a_{k-1},a_1)}$.
	\end{enumerate}
\end{lem}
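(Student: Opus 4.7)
The plan is to translate each adjacency into the coordinate identity $\g \sim \h \iff \g \h^{-1} \in \Ss$, write $\g \h^{-1}$ as a single interval element $\z_{[p,q)}$, and then read off position-wise consequences from this representation.

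For part (1), I argue by contradiction, writing $\ga \x^{-1} = \gb_{[p_1,q_1)}$ and $\ga \y^{-1} = \gc_{[p_2,q_2)}$ with $b, c \in G^\times$. At position $j$ where $a_j \ne e$, both $\x_j$ and $\y_j$ equal $e$, so comparing coordinate $j$ yields $b = c = a_j$ together with $j \in [p_1, q_1) \cap [p_2, q_2)$. At the two ``boundary'' coordinates $i+1$ and $i+k+1$ where $\x$ and $\y$ disagree (by $g_1$ and $g_{k+1}$ respectively), a quick constant-interval analysis shows each of them must be covered by \emph{exactly one} of the two intervals — otherwise $g_1 = e$ or $g_{k+1} = e$. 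Taking $j \le i$ (the case $j \ge i+k+2$ is symmetric), this forces either (i)~$q_1 > i+k+1$ and $q_2 \le i+1$, or (ii)~$q_1 \le i+1$ and $q_2 > i+k+1$. Inspecting an intermediate coordinate (say $i+2$ in case (i), $i+k$ in case (ii)), which exists because $k \ge 2$ and where $\x$ and $\y$ agree, the same coordinate reading produces simultaneously a nontrivial value $a_j$ (from the interval it lies in) and the trivial value $e$ (from the other) — forcing $a_j = e$, contradiction.

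For part (2), abbreviate the hypothesis vertices as $\gb = {_i(a_1, \ldots, a_{k-1}, e)}$ and $\gc = {_i(e, a_2, \ldots, a_k)}$. Computing $\x \gb^{-1}$ coordinate-wise (nonzero only at positions $i+1, \ldots, i+k$, with $x_k \ne e$ at $i+k$), the single-interval condition $\x \gb^{-1} = \h_{[p_1, q_1)}$ forces $q_1 = i+k+1$, $h = x_k$, and $p_1 = i+r+1$ for some $0 \le r \le k-1$, translating into $x_j = a_j$ for $j \le r$ and $x_j = x_k a_j$ for $r < j \le k-1$. Symmetrically, $\x \gc^{-1} = \mathbf{d}_{[p_2, q_2)}$ pins $p_2 = i+1$, $d = x_1$, $q_2 = i+s+1$ for some $1 \le s \le k$, with $x_j = x_1 a_j$ for $2 \le j \le s$ and $x_j = a_j$ for $s < j \le k$. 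For each $j$ with $2 \le j \le k-1$ (nonempty since $k > 2$) the two descriptions of $x_j$ must coincide. Of the four sub-cases split by ``$j \le r$?'' and ``$j \le s$?'', the symmetric ones $(j \le r, j \le s)$ and $(j > r, j > s)$ force $x_1 = e$ or $x_k = e$ and are excluded; the remaining two are mutually incompatible across different $j$ (they imply $s < r$ versus $r < s$), so one must hold uniformly. The first uniformity gives $(r, s) = (k-1, 1)$, hence $\x = {_i(a_1, \ldots, a_k)}$. The second gives $(r, s) \in \{0, 1\} \times \{k-1, k\}$ together with $x_1 = x_k$; checking the four options, three force $a_1 = e$ or $a_k = e$, leaving only $(r, s) = (1, k-1)$, which yields $a_1 = a_k$ and $\x = {_i(a_1, a_1 a_2, \ldots, a_1 a_{k-1}, a_1)}$.

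The main obstacle is organizational rather than conceptual: one juggles several nested case splits — where $j$ lies relative to the middle block, which interval covers each boundary coordinate, and the interplay between $r$ and $s$ — that must be kept straight simultaneously. Once the ``exactly one interval per jump'' principle and the coordinate-reading formulas are in hand, each branch closes either by a one-line arithmetic contradiction or pins down exactly one of the two claimed forms for $\x$.
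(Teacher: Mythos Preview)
Your proof is correct and follows essentially the same approach as the paper's: both arguments translate the adjacencies into interval conditions via $\g\sim\h\iff\g\h^{-1}\in\Ss$ and then read off coordinate constraints. In part~(2) your $(r,s)$-parametrization of the two intervals is a slightly more systematic organization than the paper's sequential case split (first pin down $x_1=a_1$, $x_k=a_k$, then branch on $x_2a_2^{-1}$), but the underlying content is identical.
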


%----------------------------------------------------------------------------------

\begin{proof} (1) Assume that $a_j\neq e$ for some $j\leqslant i$ and take a minimal such index. Suppose that $\x\sim\ga$ and $\y\sim\ga$ in $\G_m(G)$. Then by (\ref{x-y})
$$
\begin{array}{rcl}
\x\ga^{-1} & =& {_{j-1}(a_j^{-1},\ldots, a_i^{-1},g_1a_{i+1}^{-1},g_2a_{i+2}^{-1},\ldots,g_ka_{i+k}^{-1}, a_{i+k+1}^{-1},\ldots,a_m^{-1})}\in \Ss\\[6pt]
\y\ga^{-1} & =& {_{j-1}(a_j^{-1},\ldots, a_i^{-1},a_{i+1}^{-1},g_2a_{i+2}^{-1},\ldots,g_ka_{i+k}^{-1}, g_{k+1}a_{i+k+1}^{-1},\ldots,a_m^{-1})}\in \Ss\\

\end{array}
$$
Since the $j$-th entries of both vertices $\x\ga^{-1}$ and $\y\ga^{-1}$  are the same and not equal $e$, all nonidentity entries are equal. Now comparing the $i+1$-th entries we see that they differ because $g_1\neq e$. So either $g_1a_{i+1}^{-1}=e$ or $a_i^{-1}=e$. In any case all entries on the right from this place are equal $e$, since they are equal in both vertices. But the $i+k+1$-entries differs from each other so one of them is not equal $e$, a contradiction.   
The proof for the case $i+j+2\leqslant j$ is analogous. 

\medskip

(2) By part (1) without loss of generality we may assume that $i=0$ and $k=m$. As before, by (\ref{x-y}) we have  
$$
\begin{array}{rcl}
\x\cdot(a_1,a_2,\dots,a_{m-1},e)^{-1} & =& (x_1a_1^{-1},x_2a_2^{-1},\ldots,x_{m-1}a_{m-1}^{-1},x_m)\in \Ss,\\[6pt]
\x\cdot (e,a_2,\dots,a_{m-1},a_m)^{-1} & =& (x_1,x_2a_2^{-1},\ldots,x_{m-1}a_{m-1}^{-1},x_ma_{m}^{-1})\in \Ss.
\end{array}
$$
If $x_1a_1^{-1}\neq e$, then all entries of the first vertex are equal because $x_m\neq e$. In particular $x_1a_1^{-1}=x_2a_2^{-1}\neq e$. This implies that the first two entries of the second element are also equal because $x_1\neq e$. So $x_1=x_1a^{-1}$ which gives $g_1=e$, a contradiction. Analogously we get a contradiction if we assume that $x_ma_m^{-1}\neq e$. Therefore  we may assume that $a_1=x_1$ and $a_m=x_m$.

Now suppose that $x_2a_2^{-1}\neq e$. Then similarly as in the previous case, all entries but the first one 
of the first vertex are equal $x_m$. We have also $x_1=x_2a_2^{-1}$ in the second vertex. Therefore $x_1=x_m=a_1,\ x_2=a_1a_2,\ \ldots ,x_{m-1}=a_1a_{m-1}$ that is $\x=(a_1,a_1a_2,\ldots,a_1a_{m-1},a_1)$.

Finally let us assume that $x_2a_2^{-1}=e$. Then analyzing the second vertex we see that all its entries but the first one are equal $e$. This means $x_2=a_2,\ \ldots,\  x_m=a_m$, i.e. $\x=(a_1,\dots,a_m)$. This ends the proof.
\end{proof}

%----------------------------------------------------------------------------------
%----------------------------------------------------------------------------------

\begin{thm}\label{morphism}
	Let $G$, $H$ be groups and $m>1$. Then every homogeneous graph homomorphism (isomorphism) $F\colon \G_m(G)\to \G_m(H)$ is induced by a group monomorphism (isomorphism), that is
 	$$
 	F(g_1,g_2,\dots,g_m)=(f(g_1),f(g_2),\dots,f(g_m))
 	$$
 	for some monomorphism (isomorphism) of groups $f\colon G\to H$.
\end{thm}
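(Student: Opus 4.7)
The plan is to show that \(F\) acts componentwise, i.e.\ \(F(g_1,\dots,g_m)=(f(g_1),\dots,f(g_m))\) for a single map \(f\colon G\to H\); that \(f\) is then a group (mono/iso)morphism follows by a short direct calculation. First, homogeneity gives for each interval \(\pr{k}{l}\) a well-defined map \(f_{k,l}\colon G\to H\) with \(F(\x_{[k,l)})=f_{k,l}(x)_{[k,l)}\) and \(f_{k,l}(e)=e\); a direct check using Lemma~\ref{SS_in_S} shows that \(\x_{[k,l)}\) has a unique non-identity neighbour in each adjacent interval (for instance only \(\x_{[k,l+1)}\) in \(\pr{k}{l+1}\), since \(\y_{[k,l+1)}\cdot \x_{[k,l)}^{-1}\in\Ss\) with \(y\neq e\) forces \(y=x\)), and pushing this uniqueness through \(F\) forces all the \(f_{k,l}\) to coincide with a single function \(f\colon G\to H\). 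That \(f\) is injective follows from the clique argument: \(\{\e\}\cup\pr{k}{l}\) is a complete subgraph of order \(|G|\) mapped into a clique in \(\G_m(H)\), and graph homomorphisms are injective on cliques.

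The heart of the proof is an induction extending the componentwise action from intervals to all of \(G^m\), where I induct on the span \(s\) of a vertex (the length of the smallest interval containing its support). For \(s=2\), writing \(\g={_i(x,y)}\) with \(x\ne y\) and both non-identity, the four adjacencies \(\g\sim \x_{[i+1,i+2)},\y_{[i+2,i+3)},\x_{[i+1,i+3)},\y_{[i+1,i+3)}\) push through \(F\) to constraints in \(H^m\) that, together with injectivity of \(f\), pin \(F(\g)\) to \({_i(f(x),f(y))}\). For \(s\ge 3\), writing \(\g={_i(x_1,\dots,x_s)}\) with \(x_1,x_s\in G^\times\), the vertices \(\mathbf{u}={_i(x_1,\dots,x_{s-1},e)}\) and \(\mathbf{v}={_i(e,x_2,\dots,x_s)}\) are neighbours of \(\g\) with span \(<s\), so by induction \(F(\mathbf{u})\) and \(F(\mathbf{v})\) are componentwise. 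Applying Lemma~\ref{hom}(2) in \(\G_m(H)\) with \(a_j=f(x_j)\) then restricts \(F(\g)\) to one of two possibilities: the componentwise image, or---only if \(f(x_1)=f(x_s)\)---the alternative \({_i(f(x_1),f(x_1)f(x_2),\dots,f(x_1))}\).

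The main obstacle is to rule out this ``bad'' alternative when \(x_1=x_s\) (forced by the injectivity of \(f\)). My plan is to apply Lemma~\ref{hom}(2) a second time with the alternative admissible choice of approximators obtained by taking \(\tilde a_j=x_1^{-1}x_j\); one checks directly via Lemma~\ref{SS_in_S} that \(\g\) is adjacent to both resulting vertices, and that these vertices again have smaller span (so the induction hypothesis applies). The second dichotomy, intersected with the first and combined with injectivity of \(f\), forces \(F(\g)\) to be the componentwise image and yields as a byproduct the partial identity \(f(x_1z)=f(x_1)f(z)\). A residual subcase where \(f(x_1)\) turns out to have order \(2\) is dispatched either by yet another application of the lemma with a third set of approximators, or by exhibiting an adjacent vertex \(\tilde{\g}\neq\g\) whose componentwise image (known by the induction hypothesis) would coincide with the bad form---contradicting that \(F\) sends adjacent vertices to distinct vertices. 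The cases \(m\in\{2,3\}\), in which Lemma~\ref{hom} is not stated, are handled by direct ad hoc arguments based on the adjacency tables in Section~3.

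Once \(F\) acts componentwise, the graph-homomorphism condition translates, for any adjacency \(\g\sim\h\) with \(\g\h^{-1}=\x_{[k,l)}\), into the requirement that \(f(g_j)f(h_j)^{-1}\) be one and the same element of \(H^\times\) for all \(j\in[k,l)\); letting \(h_j\) range over \(G\) and setting \(h_j=e\) yields \(f(xh)=f(x)f(h)\) for every \(x\in G^\times\) and \(h\in G\), so \(f\) is a group homomorphism, hence a monomorphism by the injectivity of Stage one. In the isomorphism case, \(F\) maps \(\pr{k}{l}\) bijectively onto \(\prh{k}{l}\) by counting, so \(F^{-1}\) is itself homogeneous; running the same argument on \(F^{-1}\) produces an inverse to \(f\) and shows \(f\) is an isomorphism.
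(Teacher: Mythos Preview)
Your strategy—induct on the span of a vertex and use Lemma~\ref{hom}(2) to pin down $F(\g)$ from two truncated neighbours—is essentially the paper's, but organized differently. The paper first treats $m=2$ in full and thereby proves at the outset that $f$ is a group monomorphism; its induction (on the width $k$ of the subproduct ${_iG^k}$) then \emph{uses} multiplicativity: writing a general vertex as ${_i(gx_1,\dots,gx_k,g)}$ with $x_1\neq e$ gives $a_1=f(gx_1)\neq f(g)=a_{k+1}$ by injectivity, so the second alternative in Lemma~\ref{hom}(2) never arises. Because you postpone multiplicativity, you cannot rewrite $f(gx_j)$ as $f(g)f(x_j)$ during the induction, and this is precisely what creates your $x_1=x_s$ obstacle.

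The fix you sketch for $x_1=x_s$ does not close the gap. If one reads $\tilde a_j=x_1^{-1}x_j$ literally then $\tilde a_1=\tilde a_s=e$ and Lemma~\ref{hom}(2) does not apply; with the natural repair $\tilde a_1=\tilde a_s=x_1$ and $\tilde a_j=x_1^{-1}x_j$ for $1<j<s$, the second dichotomy again has equal endpoints, and intersecting it with the first still leaves two live options: $F(\g)$ componentwise together with $f(x_j)=f(x_1)f(x_1^{-1}x_j)$, \emph{or} $F(\g)$ equal to the bad form together with $f(x_1^{-1}x_j)=f(x_1)f(x_j)$. Injectivity alone does not decide between them, and no order-$2$ condition on $f(x_1)$ is forced. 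Your fallback via $\tilde{\g}$ also fails: any vertex whose componentwise image would equal the bad form has span $s$ and first coordinate equal to last, so the induction hypothesis does not cover it. A clean repair within your framework is to make two passes at each span $s$: first dispose of all $\g$ with $x_1\neq x_s$ (the bad alternative is excluded at once), and then for $x_1=x_s$ choose $h\notin\{e,x_1\}$ and use the already-settled span-$s$ neighbour $\h={_i(h,x_2,\dots,x_s)}$; a direct check shows that the bad form is not adjacent to $F(\h)={_i(f(h),f(x_2),\dots,f(x_s))}$.
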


%----------------------------------------------------------------------------------

\begin{proof}
First we consider the case $m=2$. We will make use of the following properties of $\G_2(G)$:

\noindent For $a,x,y\in G$, and $x\neq y$
\begin{itemize}
%\item[(1)] if  $(x,y)\sim (a,e)$, then either $a=x$ or $a=y^{-1}x$,
\item[(a)]  if $(x,y)\sim (a,a)$, then either $a=x$ or $a=y$
\item[(b)] if  $(x,y)\sim (e,a)$, then either $a=y$ or $a=x^{-1}y$.
\end{itemize}

Suppose $F\colon \G_2(G)\to \G_2(H)$ is a homogeneous homomorphism. Since $F(\pr{1}{2})\subseteq \prh{1}{2}$, there is a map $f\colon G\to H$ such that $F(g,e)=(f(g),e)$ for all $g\in G$. Moreover, if $a\neq b$ are elements in $G^\times$, then $(a,e)\sim (b,e)$ and hence $(f(a),e)\sim (f(b),e)$. In particular, $f(a)\neq f(b)$ as $\G_2(H)$ does not contain loops. Thus the map $f\colon G\to H$ is injective. In case when $F$ is an isomorphism of graphs the map $f$ is bijective.
We claim that $F(g,g)=(f(g),f(g))$ and $F(e,g)=(e,f(g))$ for all $g\in G$. Indeed if $g\neq e$, then $F(g,g)= (x,x)$ for some $x\in H$. Since $(g,e)\sim (g,g)\sim (e,e)$, we have $(x,x)=F(g,g)\sim F(g,e)=(f(g),e)$.
By (a) it follows that either $x=f(g)$ or $x=e$. But $F(e,e)=(e,e)$ and $(g,g)\sim (e,e)$, so $x\neq e$ and hence $x=f(g)$.
Similarly $F(g,e)=(e,y)$, for some $y\in H^\times$. Since $(g,g)\sim (e,g)$, we have $(f(g),f(g))=F(g,g)\sim F(e,g)=(e,y)$. By (b) it follows that $y=f(g)$, so $F(e,g)=(e,f(g))$.

For $g\in G^\times$ the elements  $(e,e),(g,e),(e,g^{-1})$ form a clique in $\G_2(G)$, so 
$$
\{F(e,e),F(g,e), F(e,g^{-1})\}=\{(e,e), (f(g),e),(e,f(g^{-1})\}
$$  
is a clique in $\G_2(H)$. 
{\small
\begin{center}
\begin{tikzpicture}[scale=0.7]
\node at (1,0) {$\bullet$};
\node at (-1,0) {$\bullet$};
\node at (3,0) {$\bullet$};
\node at (2,1.73) {$\bullet$};
\node[below] at (1,0) {$(e,e)$};
\node[below] at (-1,0) {$(e,e)$};
\node[right] at (3,0) {$(e,f(g^{-1}))$};
\node[right] at (2,1.73) {$(f(g),e)$};
\draw (3,0)--(1,0)--(2,1.73)--(3,0);

\node at (-3,0) {$\bullet$};
\node at (-2,1.73) {$\bullet$};
\node[left] at (-3,0) {$(e,g^{-1})$};
\node[left] at (-2,1.73) {$(g,e)$};
\draw (-3,0)--(-1,0)--(-2,1.73)--(-3,0);
\draw[->, dashed, thick] (-1.9,1.8) to [bend left] node [above] {$F$} (1.9,1.8);
\draw[->, dashed, thick] (-0.9,0.1) to [bend left] node [above] {$F$} (0.9,0.1);
\draw[->, dashed, thick] (-2.9,-0.1) to [bend right] node [below] {$F$} (2.9,-0.1);
\end{tikzpicture}

{\bf Fig.\,8.}
\end{center}}
\medskip

\noindent By (b) it follows that $f(g^{-1})={f(g)}^{-1}.$

Now let $a,b\in G^\times$ be such that $ab\neq e$. Take $x,y\in H$ such that $F(ab,a)=(x,y)$.
Clearly $(ab,ab)\sim (ab,a)$, so $F(ab,ab)\sim F(ab,a)$, that is $(f(ab),f(ab))\sim (x,y)$. By (a) either $x=f(ab)$, or $y=f(ab)$.  

{\small
\begin{center}
\begin{tikzpicture}[scale=0.65]
\node at (0,0) {$\bullet$};
\node at (1,2) {$\bullet$};
\node at (1,-2) {$\bullet$};

\node at (-1,2) {$\bullet$};
\node at (-1,-2) {$\bullet$};
\node[left] at (0,0) {$(ab,a)$};
\node[right] at (1,2) {$(b,e)$};
\node[right] at (1,-2) {$(e,b^{-1})$};
\node[left] at (-1,2) {$(a,a)$};
\node[left] at (-1,-2) {$(ab,e)$};
\draw (1,2)--(0,0)--(-1,2);
\draw (1,-2)--(0,0)--(-1,-2);

\node at (8,0) {$\bullet$};
\node at (9,2) {$\bullet$};
\node at (9,-2) {$\bullet$};
\node at (7,2) {$\bullet$};
\node at (7,-2) {$\bullet$};
\node[right] at (8,0) {$(x,y)=F(ab,a)$};
\node[right] at (9,2) {$(f(b),e)$};
\node[right] at (9,-2) {$(e,f(b)^{-1})$};
\node[left] at (7,2) {$(f(a),f(a))$};
\node[left] at (7,-2) {$(f(ab),e)$};
\draw (9,2)--(8,0)--(7,2);
\draw (9,-2)--(8,0)--(7,-2);

\draw[->, dashed, thick] (0.1,0.1) to [bend left] node [below] {$F$} (7.9,0.1);
\draw[->, dashed, thick] (-0.9,2.1) to [bend left] node [above] {$$} (6.9,2.1);
\draw[->, dashed, thick] (-0.9,-2.1) to [bend right] node [above] {$$} (6.9,-2.1);
\draw[->, dashed, thick] (1.1,2.1) to [bend left] node [above] {$$} (8.9,2.1);
\draw[->, dashed, thick] (1.1,-2.1) to [bend right] node [above] {$$} (8.9,-2.1);

\end{tikzpicture}

{\bf Fig.\,9}
\end{center}}
\medskip

\noindent Suppose first that $y=f(ab)$. Then $(ab,a)\sim (a,a)$ implies $F(ab,a)=(x,f(ab))\sim (f(a),f(a))=F(a,a)$. Since $x\neq f(a,b)\neq f(a)$ we have $x=f(a)$ by (a). Now $(ab,a)\sim (ab,e)$ thus $F(ab,a)=(f(a),f(ab))\sim (f(ab),e)$ which implies $f(ab)^{-1}f(a)=f(ab)$ and then $f(a)=f(ab)^2$ by a condition symmetric to (b). Further $(ab,a)\sim (b,e)$ therefore $(f(a),f(ab))\sim (f(b),e)$ implies $f(ab)^{-1}f(a)=f(b)$ and so $f(a) =f(ab)f(b)$ by (b). Earlier we obtained $f(a)=f(ab)^2$, therefore $f(b)=f(ab)$, a contradiction. 

So we may assume that $x=f(ab)$. Since $(ab,a)\sim (a,a)$, we see that $F(ab,a)=(f(ab),y)\sim (f(a),f(a))$. Therefore by (a) $y=f(a)$, because $f(ab)\neq f(a)$. So we obtained that $F(ab,a)=(f(ab),f(a))$ and if $a$ and $b$ run independently the set $G^{\times}$ with additional condition $ab\neq e$ the vertex $(ab,a)$ run all elements of the set $G^2\setminus(\pr{1}{2}\cup\pr{2}{3}\cup\pr{1}{3})$. Therefore for arbitrary $g,h\in G$ we have $F(g,h)=(f(g),f(h)).$

Now we show that $f$ is a homomorphism of groups. Since $(ab,a)\sim (e,b^{-1})$, we see that $F(ab,a)=(f(ab),f(a))\sim (e,f(b)^{-1})$. Now, by the assumption $ab\neq e$, so $f(a)\neq f(b)^{-1}$ and then 
by (b) $f(ab)^{-1}f(a)=f(b)^{-1}$. Therefore
$$
f(ab)=f(a)f(b) \   \   {\rm  for\  all}\  a,b\in G.
$$

We will now consider the general case. 
We will show by induction that there exists  a group monomorphism  $f\colon G\to H$  such that for any $k\geqslant 2$ and $i\leqslant m-k$
\begin{equation}\label{ind}
F(_iG^k)\subseteq {_iH^k} \   \   {\rm  and}\  \   F( {_i(g_1,g_2,\dots,g_k))}= {_i(f(g_1),f(g_2),\dots,f(g_k))},
\end{equation}
where $_i(g_1,g_2,\dots,g_k)\in {_iG^k}$. 

The case $k=2$ is almost done by the case $m=2$ considered above. We only need to know that $F(_iG^2)\subseteq {_iH^2}$. Take ${_i(g_1,g_2)}\in {_iG^2}$. If $g_1=g_2$, then $F({_i(g_1,g_2)})\in {_iH^2}$, because $F$ is homogeneous. Suppose that $g_1\neq g_2$. Then we have a three-element clique in $\G_m(G)$:
$\{{_i(g_1,g_2)}, {_i(g_1,g_1)}, {_i(g_2,g_2)}\}$. Thus we have a three-element clique in $\G_m(H)$:
$$
\{F({_i(g_1,g_2)}), {_i(h_1,h_1)}, {_i(h_2,h_2)}\},
$$ 
where $F({_i(g_1,g_1)})={_i(h_1,h_1)}$, $F({_i(g_2,g_2)})={_i(h_2,h_2)}$.
Now it can be easily proved that 
$$
F({_i(g_1,g_2)})\in {_iH^2}.
$$
We demonstrate it for the case $m=3.$

Let  $F(e,g_1,g_2)=(x,y,z)$, where $x\neq e$. Then $(x,y,z)\sim (e,f(g_1),f(g_1)$ and $(x,y,z)\sim (e,f(g_2),f(g_2)),$ i.e. $(x,yf(g_1)^{-1},zf(g_1)^{-1})\in \Ss$ and $(x,yf(g_2)^{-1},zf(g_2)^{-1})\in \Ss$. If $zf(g_2)^{-1}$ $\neq e$, then $zf(g_2)^{-1}=yf(g_2)^{-1}=x$. But $(e,g_1,g_2)\sim (e,e,g_2)$, thus $(x,y,z)\sim (e,e,f(g_2))$ that is $(x,y,zf(g_2)^{-1})\in \Ss$. Hence $zf(g_2)^{-1}=y=x$ and then $f(g_2)=e$, a contradiction. Therefore $z=f(g_2)$. This means that $zf(g_1)^{-1}\neq e$, and similarly as before, $(x,yf(g_1)^{-1},zf(g_1)^{-1})\in \Ss$, which implies $y=xf(g_1)$ and $z=xf(g_1)$. Consequently $y=z$. We have also $(e,g_1,g_2)\sim (e,g_1,e)$, so $(x,y,z)\sim(e,f(g_1),e$ and then $(x,yf(g_1)^{-1},z)\in \Ss$. Thus $y=zf(g_1)$, which finally gives $f(g_1)=e$, a contradiction. In these considerations we used the assumption $x\neq e$. So $x=e$ and $F(\{e\}\times G\times G)\subseteq \{e\}\times H\times H$.

By the considered case $m=2$ there is a group monomorhism $f_i\colon G\to H$ such that 
$$F({_i(g_1,g_2)})=
{_i(f_i(g_1),f_i(g_2))}.
$$
Since $_iG^2\cap {_{i+1}G^2}={_{i+1}G^1}$, we see that $f_i=f_{i+1}$ for all $i$. This finishes the proof for $k=2$.

Suppose that $k\geqslant 2$ and (\ref{ind}) holds for all $i\leqslant m-k$. 
Take $g\in G^\times$ and ${_i(x_1,x_2,\dots,x_k)}\in {_{i}G^k}$ with $x_1\neq e$ and $gx_1\neq e$. Notice that
$${_i(gx_1,gx_2,\dots,gx_k,g)}\sim {_i(gx_1,gx_2,\dots,gx_k,e)}$$
and  
$$
{_i(gx_1,gx_2,\dots,gx_k,g)}\sim {_i(e,gx_2,\dots,gx_k,g)}.
$$
Applying (\ref{ind}) one obtains
$$
F({_i(gx_1,gx_2,\dots,gx_k,g)})\sim {_i(f(g)f(x_1),f(g)f(x_2),\dots,f(g)f(x_k),e)}
$$
and
$$
F({_i(gx_1,gx_2,\dots,gx_k,g)})\sim {_i(e,f(g)f(x_2),\dots,f(g)f(x_k),f(g))}
$$
Part 1. of Lemma \ref{hom} gives us that $F({_i(gx_1,gx_2,\dots,gx_k,g)})\in {_iG^{k+1}}$ and part 2.
implies 
$$
F({_i(gx_1,gx_2,\dots,gx_k,g)})={_i(f(g)f(x_1),f(g)f(x_2),\dots,f(g)f(x_k),f(g))}.
$$
Substituting above $g=g_{k+1}$, $x_1=g_{k+1}^{-1}g_1$, $x_2=g_{k+1}^{-1}g_1$,$\dots$, $x_k=g_{k+1}^{-1}g_k$ we obtain
that
$$
F( {_i(g_1,g_2,\dots,g_k,g_{k+1}))}= {_i(f(g_1),f(g_2),\dots,f(g_k),f(g_{k+1}))}.
$$
This finishes the proof.
\end{proof}

%----------------------------------------------------------------------------------
%----------------------------------------------------------------------------------

\begin{cor}
Let $G$, $H$ be groups and $m\geqslant 2$. Then the graphs  $\G_m(G)$ and $\G_m(H)$ are isomorphic under a homogeneous isomorphism if and only if the groups $G$ and $H$ are isomorphic.
\end{cor}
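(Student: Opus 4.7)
The plan is to derive this corollary as an almost immediate consequence of Theorem \ref{morphism}, splitting into the two implications.

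For the ``only if'' direction, I would appeal directly to Theorem \ref{morphism}: any homogeneous graph isomorphism $F\colon \G_m(G)\to \G_m(H)$ is of the form $F(g_1,\dots,g_m)=(f(g_1),\dots,f(g_m))$ for some group isomorphism $f\colon G\to H$, whence $G\simeq H$ as groups.

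For the ``if'' direction, starting from a group isomorphism $f\colon G\to H$, I would define $F\colon G^m\to H^m$ coordinate-wise by the same formula, $F(g_1,\dots,g_m)=(f(g_1),\dots,f(g_m))$. The routine checks are: $F$ is bijective since $f$ is; $F(\e_G)=\e_H$ since $f(e)=e$; and $F$ sends the interval $\pr{k}{l}$ onto $\prh{k}{l}$ because $F(\x_{[k,l)})=(f(x))_{[k,l)}$, so $F$ carries $\Ss(G)$ bijectively onto $\Ss(H)$. To see that adjacency is preserved in both directions, I would use the criterion $\g\sim\h$ iff $\h\g^{-1}\in\Ss$ recorded in (\ref{x-y}): since $f$ is multiplicative, $F(\h)F(\g)^{-1}=F(\h\g^{-1})$, and this lies in $\Ss(H)$ exactly when $\h\g^{-1}$ lies in $\Ss(G)$. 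Therefore $F$ is a homogeneous graph isomorphism.

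I do not anticipate any real obstacle, since the corollary is essentially a repackaging of Theorem \ref{morphism} emphasizing that the isomorphism class of $G$ is a homogeneous-isomorphism invariant of the graph $\G_m(G)$; the only content beyond Theorem \ref{morphism} is the trivial verification that coordinate-wise lifts of group isomorphisms produce homogeneous graph isomorphisms.
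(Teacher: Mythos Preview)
Your proposal is correct and follows essentially the same approach as the paper: the ``only if'' direction is an immediate application of Theorem \ref{morphism}, and the ``if'' direction is the routine verification that the coordinate-wise lift of a group isomorphism is a homogeneous graph isomorphism. The paper's proof is slightly terser in the ``if'' direction (it simply asserts that the diagonal map induces a homogeneous isomorphism), whereas you spell out the adjacency check via (\ref{x-y}), but the content is identical.
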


%----------------------------------------------------------------------------------

\begin{proof}
If $\varphi\colon G\to H$ is an isomorphism of groups, then certainly the map $\Phi\colon G^m\to H^m$, 
$$
(g_1,g_2,\dots,g_m)\mapsto ({g_1}^\varphi,{g_2}^\varphi,\dots,{g_m}^\varphi)
$$ 
induces a homogeneous isomorphism between graphs $\G_m(G)$ and $\G_m(H)$.

If $F\colon \G_m(G)\to \G_m(H)$ is a homogeneous graph isomorphism, then by Theorem \ref{morphism} there exists an isomorphism of groups $f\colon G\to H$, such that 
$$
F(g_1,g_2,\dots,g_m)=(f(g_1),f(g_2),\dots,f(g_m)).
$$ 
Thus the groups $G$ and $H$ are isomorphic.
\end{proof}

%----------------------------------------------------------------------------------
%----------------------------------------------------------------------------------
%----------------------------------------------------------------------------------
 
\section{Automorphisms and isomorphisms} 

It is well known that in any Cayley graph the right transfers form a group of vertex-transitive group of automorphisms of the graph. We denote this group by $\mathbf{T}_m(G)$: 
$$\mathbf{T}_m(G)=\{T_{\g}:\ \g\in G^m\}, \  \ T_{\g}:G^m\rightarrow G^m, \ \ \x^{T_{\g}}= \x\g,\ \ {\rm for}\ \ \x\in G^m.$$ 

In the previous section we defined homogeneous homomorphism. As a consequence of this definition by a homogeneous automorphisms we mean automorphisms fixing all intervals. Since they are determined by automorphism of the group $G$
(by Theorem \ref{morphism}), we denote them by $\mathbf{Aut}_m(G)$ and we use the same letters for denoting automorphisms of $G$ and homogeneous automorphisms of $\G_m(G)$: If $f\in {\rm Aut}(G)$ and $\x=(x_1,\ldots,x_m)\in G^m$, then
$$\x^f=(f(x_1),\ldots,f(x_m)).$$
It is clear that for $\g\in G^m$ and $f\in \mathbf{Aut}_m(G)$
\begin{equation}\label{fTf}
f^{-1}T_{\g}f=T_{\g^f}
\end{equation}
that is $\mathbf{Aut}_{m}(G)$ normalizes $\mathbf{T}_m(G)$. 

In this section we give a description of the group $\mathbf{Aut}(\G_m(G))$ of all automorphisms of $\G_m(G)$. The cases of abelian and non-abelian groups appear to be essentially different. We begin with a simpler one.

%----------------------------------------------------------------------------------
%----------------------------------------------------------------------------------

\begin{lem}\label{abelian}
Let $G$ be an abelian group and $m\geqslant 2$. For $i=1,2,\dots,m$\ let\ $\gamma_i\colon G^m\to G^m$ be the mappings given by
$$
(g_1,g_2,\dots,g_m)^{\gamma_i}=(g_1,\dots,g_{i-1},g_{i-1}g_i^{-1}g_{i+1},g_{i+1},\dots,g_m),
$$ 
(we assume $g_0=g_{m+1}=e$). Then
\begin{enumerate}
\item all $\gamma_i$ are automorphisms of the group $G^m$ of order $2$, satisfying the condition 
$\mathcal{S}^{\gamma_i}=\mathcal{S}$ and then all they are automorphisms of the graph $\G_m(G)$.

\item for $|i-j|>1$, $(1\leqslant i,j\leqslant m)$, we have $\gamma_i\gamma_j=\gamma_j\gamma_i$. 

\item for all $i, j$, $(1\leqslant i,\,j\leqslant m,\ i+j\leqslant m)$,\ $\gamma_i\gamma_{i+1}\dots\gamma_{i+j}$ is an automorphism of $\G_m(G)$ of order $j+2$, in particular the automorphisms $\gamma_i\gamma_{i+1}$ have order $3$ and $\gamma_1\dots\gamma_m=\omega$ has order $m+1$.

\item the subgroup  $\mathbf{\Gamma}_m=\langle\gamma_1,\gamma_2,\dots,\gamma_m\rangle $ of $\mathbf{Aut}(\G_m(G))$  is isomorphic to the symmetric group $S_{m+1}$ of degree $m+1$.
% \item the map $\epsilon\colon G^m\to G^m$ given by $\g^{\varepsilon}=\g^{-1}$ is an automorphism of order two of  $\mathscr{G}_m(G)$, provided $G$ is not an elementary abelian $2$-group.
\end{enumerate}

\end{lem}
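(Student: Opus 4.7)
The plan is to work in additive notation throughout, which is valid because $G$ is abelian. For part (1), each $\gamma_i$ is visibly a group endomorphism, since every output coordinate is a $\mathbb{Z}$-linear combination of input coordinates, and the direct substitution $g_{i-1}-(g_{i-1}-g_i+g_{i+1})+g_{i+1}=g_i$ gives $\gamma_i^2=\mathrm{id}$. To see that $\mathcal{S}^{\gamma_i}=\mathcal{S}$, I would run through the cases for the position of $i$ relative to the endpoints $k,l$ of an interval $\mathbf{x}_{[k,l)}$: only $i\in\{k-1,k,l-1,l\}$ is nontrivial, and in each such case $\gamma_i$ transforms $\mathbf{x}_{[k,l)}$ into some $\mathbf{y}_{[k',l')}$ with $y\in\{x,x^{-1}\}$, so the image again lies in $\mathcal{S}$. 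Part (2) is then immediate: $\gamma_i$ reads coordinates $i-1,i,i+1$ and writes only to coordinate $i$, so when $|i-j|>1$ the read/write sets of $\gamma_i$ and $\gamma_j$ are disjoint and the two maps commute.

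The heart of the argument is part (3). First, a short induction on $j$ shows that $\sigma:=\gamma_i\gamma_{i+1}\cdots\gamma_{i+j}$ replaces every coordinate $k\in\{i,\ldots,i+j\}$ by $g_{i-1}-g_i+g_{k+1}$ and fixes all other coordinates (using $g_0=g_{m+1}=0$). The key trick is then to pass to the $j+2$ increment variables $\beta_l:=g_{i-1+l}-g_{i-2+l}$ for $l=1,\ldots,j+2$. A direct substitution gives $\beta_l\circ\sigma=\beta_{l+1}$ for $l=1,\ldots,j+1$ and $\beta_{j+2}\circ\sigma=\beta_1$; that is, $\sigma$ realises the cyclic shift of length $j+2$ on $(\beta_1,\ldots,\beta_{j+2})$. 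Hence $\sigma^{j+2}=\mathrm{id}$, and the order is exactly $j+2$ because whenever $|G|>1$ one can choose $\mathbf{g}$ for which the $\beta_l$ are not all equal. Specialising $j=1$ gives the braid relation $(\gamma_i\gamma_{i+1})^3=\mathrm{id}$, and specialising $i=1$, $j=m-1$ gives the order of $\omega$.

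For part (4), the relations collected in (1), (2) and the case $j=1$ of (3) are exactly the Coxeter relations of $S_{m+1}$ in its adjacent-transposition presentation, so the assignment $s_i\mapsto\gamma_i$ extends to a surjection $\pi\colon S_{m+1}\twoheadrightarrow\mathbf{\Gamma}_m$. For injectivity I would revisit the case analysis from (1) at the level of intervals: a direct check shows that $\gamma_i$ permutes the intervals $[k,l)$ exactly the way the transposition $(i,i+1)$ permutes the $2$-element subsets of $\{1,\ldots,m+1\}$. This produces a homomorphism $\psi\colon\mathbf{\Gamma}_m\to S_{m+1}$ with $\psi(\gamma_i)=(i,i+1)$, and $\psi\circ\pi$ is the identity on Coxeter generators, hence on all of $S_{m+1}$. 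Therefore $\pi$ is an isomorphism.

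The only real obstacle is the cyclic-shift observation in part (3); once the increment variables $\beta_l$ are in place the orders $j+2$ become transparent, and everything else is either a routine case check or a standard consequence of the Coxeter presentation.
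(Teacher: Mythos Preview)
Your argument is correct, and in parts (3) and (4) it diverges interestingly from the paper's route.

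For part (3) the paper proceeds by brute force: it writes out $\g^{(\gamma_i\cdots\gamma_{i+j})^t}$ explicitly for $t=1,2,\ldots,j+1$ and then observes that one more application returns $\g$. Your change of variables to the increments $\beta_l=g_{i-1+l}-g_{i-2+l}$ is more conceptual: it converts $\sigma=\gamma_i\cdots\gamma_{i+j}$ into the cyclic shift on a $(j{+}2)$-tuple, from which $\sigma^{j+2}=\mathrm{id}$ is immediate. One small point to tighten: ``the $\beta_l$ are not all equal'' only rules out order~$1$; to get order exactly $j+2$ you need a tuple $(\beta_1,\ldots,\beta_{j+2})$ with no proper period dividing $j+2$. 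This is easy to arrange (e.g.\ $\beta_1=x\ne0$, $\beta_2=-x$, the rest $0$, which is compatible with the boundary constraints $g_0=g_{m+1}=0$ even in the extremal case $i=1$, $i+j=m$), but it is worth saying.

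For part (4) both proofs obtain the surjection $S_{m+1}\twoheadrightarrow\mathbf{\Gamma}_m$ from the Coxeter presentation. The paper then argues injectivity by noting that the only proper quotients of $S_{m+1}$ have order at most~$2$; this is clean but, as stated, glosses over $m=3$, where $S_4/V_4\cong S_3$ is an extra quotient (the paper's result is still fine because $\omega\in\mathbf{\Gamma}_3$ has order~$4$, ruling out $S_3$). Your approach sidesteps this entirely: the case analysis from part (1) shows that $\gamma_i$ permutes the intervals $G_{[k,l)}$ exactly as the transposition $(i,i+1)$ permutes $2$-subsets of $\{1,\ldots,m+1\}$, and since this action of $S_{m+1}$ on $2$-subsets is faithful for $m\geqslant2$, you get a genuine homomorphism $\psi\colon\mathbf{\Gamma}_m\to S_{m+1}$ splitting $\pi$. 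This is a cleaner and more uniform proof of injectivity.
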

%----------------------------------------------------------------------------------
\begin{proof}
\noindent (1) By the definition of $\gamma_i$ we easily see that $\gamma_i^2=1$. Furthermore, again by the definition of $\gamma_i$ we have $\x_{[i,i+1)}^{\gamma_i}=\x_{[i,i+1)}^{-1}$ and \vspace{6pt}\par
\begin{equation}\label{gami}
{\rm
\begin{tabular}{ll}
	for $k<i$, & $\x_{[k,i)}^{\gamma_i}=\x_{[k,i+1)}$ and $\x_{[k,i+1)}^{\gamma_i}=\x_{[k,i)}$, \\[6pt]
	for $i<s$, & $\x_{[i,s+1)}^{\gamma_i}=\x_{[i+1,s+1)}$ and $\x_{[i+1,s+1)}^{\gamma_i}=\x_{[i,s+1)}$. \\[6pt]
\end{tabular}}
\end{equation}\par
\noindent For all other $1\leqslant k<l\leqslant m+1$ the elements $\x_{[k,l)}$ are fixed points of $\gamma_i$. Hence $\mathcal{S}^{\gamma_i}=\mathcal{S}$.
Now, by abelianity of $G$, it is easily seen that $\gamma_i$ is an automorphism of the group $G^{m}$:
$$\begin{array}{rcl}
(\g\h)^{\gamma_i} & = & (g_1h_1,\dots,g_{i-1}h_{i-1}, g_ih_i,g_{i+1}h_{i+i}\dots,g_mh_m)^{\gamma_i}\\
& = & (g_1h_1,\dots,g_{i-1}h_{i-1},(g_{i-1}h_{i-1}) (g_i^{-1}h_{i}^{-1})(g_{i+1}h_{i+1}),g_{i+1}h_{i+1},\dots,g_mh_m)\\
& = & (g_1,\dots,g_{i-1},g_{i-1} g_i^{-1}g_{i+1},g_{i+1},\dots,g_m)(h_1,\dots,h_{i-1},h_{i-1}h_{i}^{-1}h_{i+1},h_{i+1},\dots,h_m)\\
& = & \g^{\gamma_i}\h^{\gamma_{i}}
\end{array}
$$
Therefore, if $\g,\h\in G^m$ are such that $\g\sim\h$ and $\x=\x_{[k,s+1)}$ is such that $\h=\x\g$, then $\h^{\gamma_i}=(\x\g)^{\gamma_i}=\x^{\gamma_i}\g^{\gamma_i}$. Hence $\g^{\gamma_i}\sim\h^{\gamma_i}$.

\medskip

\noindent (2) The proof is obvious.

\medskip

\noindent (3) Suppose first that $j=1$. For $1\leqslant i\leqslant m$ the automorphisms $\eta=\gamma_i\gamma_{i+1}$ have order $3$. In fact:
$$
\begin{array}{rcl}
\g^{\gamma_i\gamma_{i+1}} & = & (g_1,\dots,g_{i-1},g_{i-1}g_i^{-1}g_{i+1}, g_{i-1}g_{i}^{-1}g_{i+2},g_{i+2},\dots,g_m),\\ [8pt]
\end{array}
$$
and
$$
\begin{array}{rcl}
\g^{(\gamma_i\gamma_{i+1})^2} & = & (g_1,\dots,g_{i-1},g_{i-1}g_{i+1}^{-1}g_{i+2}, g_{i}g_{i+1}^{-1}g_{i+2},g_{i+2},\dots,g_m).\\ [8pt]
\end{array}
$$
We have also 
$$
\begin{array}{rcl}
\g^{\gamma_{i+1}\gamma_{i}} & = & (g_1,\dots,g_{i-1},g_{i-1}g_{i+1}^{-1}g_{i+2}, g_{i}g_{i+1}^{-1}g_{i+2},g_{i+2},\dots,g_m),\\ [8pt]
\end{array}
$$
Then $(\gamma_i\gamma_{i+1})^2=\gamma_{i+1}\gamma_i$, which means that $(\gamma_i\gamma_{i+1})^3=1$.
\medskip

For the general case we obtain
$$
\begin{array}{rcl}
\g^{\gamma_i\dots\gamma_{i+j}} & = & (g_1,\dots,g_{i-1},g_{i-1}g_i^{-1}g_{i+1}, g_{i-1}g_{i}^{-1}g_{i+2},\dots,g_{i-1}g_i^{-1}g_{i+j+1},g_{i+j+1},\dots,g_m),\\ [8pt]

\g^{(\gamma_i\dots\gamma_{i+j})^2}  & = &  
(g_1,\dots,g_{i-1},g_{i-1}g_{i+1}^{-1}g_{i+2}, g_{i-1}g_{i+1}^{-1}g_{i+3},\dots,\\ & & \dots,g_{i-1}g_{i+1}^{-1}g_{i+j+1},g_{i}g_{i+1}^{-1}g_{i+j+1},g_{i+j+1},\dots,g_m),\\ [8pt]

\g^{(\gamma_i\dots\gamma_{i+j})^3}  &= & 
(g_1,\dots,g_{i-1},g_{i-1}g_{i+2}^{-1}g_{i+3}, g_{i-1}g_{i+2}^{-1}g_{i+4},\dots,\\ 
& & \dots,g_{i}g_{i+2}^{-1}g_{i+j+1},g_{i+1}g_{i+2}^{-1}g_{i+j+1},g_{i+j+1},\dots,g_m),\\ [8pt]

& & \dots\dots \dots \dots\dots \dots \dots\dots \dots  \\ [8pt]

\g^{(\gamma_i\dots\gamma_{i+j})^{j+1}} & = & 
(g_1,\dots,g_{i-1},g_{i-1}g_{i+j}^{-1}g_{i+j+1}, g_{i}g_{i+j}^{-1}g_{i+j+1},\dots,\\ & & \dots,g_{i+j-2}g_{i+j}^{-1}g_{i+j+1},g_{i+j-1}g_{i+j}^{-1}g_{i+j+1},g_{i+j+1},\dots,g_m).\\ [8pt]
\end{array}
$$
Now, one can easily see that $\g^{(\gamma_i\dots\gamma_{i+j})^{j+2}}=1.$ 
\medskip

\noindent (4) It is well known that the symmetric group $S_{m+1}$ can be described as the group isomorphic to 
\begin{equation}\label{gen_rel}
\langle\sigma_1,\dots,\sigma_m|\ \sigma_i^2=1, \ (\sigma_i\sigma_{i+1})^3=1, \ \sigma_i\sigma_j=\sigma_j\sigma_i \ \ \ {\rm for}\ \ |i-j|>1,\ i,j=1,\dots,m \rangle.
\end{equation}
Since $\mathbf{\Gamma}_m$ is generated by elements $\gamma_i$, $i=1,\dots,m$, satisfyjng the same relations as in (\ref{gen_rel}), $\mathbf{\Gamma}_m$ is a homomorphic image of $S_{m+1}$. More precisely, for $i=1,\dots,m$ let $\sigma_i=(i,i+1)$ be a transposition of neighbour elements in the set $\{1,\dots,m+1\}$. Then $\sigma_i$-s satisfy all the relations (\ref{gen_rel}) and the correspondence $\sigma_i\mapsto\gamma_i$ can be extended to an epimorphism from $S_{m+1}$ onto $\mathbf{\Gamma}_m$. 
The possible homomorphic images of $S_{m+1}$ have order $1$ or $2$ or are isomorphic to $S_{m+1}$. The group $\mathbf{\Gamma}_m$ has more than $2$ elements, so the above map extends to an isomorphism of $S_{m+1}$ onto $\mathbf{\Gamma}_m$.
\end{proof}

%----------------------------------------------------------------------------------
%----------------------------------------------------------------------------------

\begin{lem} For $i=1,\dots,p$ we define automorphisms $\tau_i$ in the following way depending on whether $m=2p-1$ or $m=2p$

$$\begin{array}{rclcrcl}
m&=&2p-1&&m&=&2p \\ [6pt]\hline 
\tau_p & = & \gamma_p,&& \tau_p & = & \gamma_{p+1}\gamma_p\gamma_{p+1},\\
\tau_{p-1} & = & \gamma_{p-1}\gamma_{p+1}\tau_p\gamma_{p+1}\gamma_{p-1},&& \tau_{p-1} & = & \gamma_{p-1}\gamma_{p+2}\tau_p\gamma_{p+2}\gamma_{p-1},\\
\tau_{p-2} & = & \gamma_{p-2}\gamma_{p+2}\tau_{p-1}\gamma_{p+2}\gamma_{p-2},&& \tau_{p-2} & = & \gamma_{p-2}\gamma_{p+3}\tau_{p-1}\gamma_{p+3}\gamma_{p-2},\\
\dots & & \dots && \dots & & \dots \\
\tau_1 & = & \gamma_1\gamma_{2p}\tau_2\gamma_{2p}\gamma_1&&\tau_1 & = & \gamma_1\gamma_{2p+1}\tau_2\gamma_{2p+1}\gamma_1.\\
\end{array}$$
Then 
\begin{enumerate}
\item for $1\leqslant i,j\leqslant p$, $\tau_i\tau_j=\tau_j\tau_i$.
\item $\tau_1\dots\tau_p=\varepsilon\tau$, where $\varepsilon\colon G^m\to G^m$ is defined by $\g^{\varepsilon}=\g^{-1}$ and $\tau\colon G^m\to G^m$ given by $$(g_1,g_2,\dots,g_m)^\tau=(g_m,g_{m-1},\dots,g_2,g_1)$$ are commuting automorphisms  of order two of  $\G_m(G)$. 
\item $\langle \mathbf{\Gamma}_m, \tau \rangle=\mathbf{\Gamma}_m\times\langle\varepsilon\rangle$, provided $G$ is not an elementary abelian $2$-group.
\end{enumerate}
\end{lem}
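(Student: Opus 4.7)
The plan is to change coordinates on $G^m$ so that everything reduces to a permutation computation. For $\g=(g_1,\dots,g_m)\in G^m$, using the convention $g_0=g_{m+1}=e$, set $h_j=g_jg_{j-1}^{-1}$ for $j=1,\dots,m+1$; the map $\g\mapsto(h_1,\dots,h_{m+1})$ identifies $G^m$ with the subgroup of $G^{m+1}$ consisting of tuples whose entries multiply to $e$. A direct calculation from~(\ref{gami}) shows that $\gamma_i$ interchanges $h_i$ with $h_{i+1}$ and fixes the remaining coordinates, so the isomorphism $\mathbf{\Gamma}_m\simeq S_{m+1}$ from Lemma~\ref{abelian}(4) is realized by the standard permutation action of $S_{m+1}$ on $(h_1,\dots,h_{m+1})$, with $\gamma_i$ corresponding to the adjacent transposition $(i,i+1)$. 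I would then prove by descending induction on $i$ that $\tau_i$ corresponds to the transposition $(i,m+2-i)$: the bases $\tau_p=\gamma_p=(p,p+1)$ for $m=2p-1$ and $\tau_p=\gamma_{p+1}\gamma_p\gamma_{p+1}=(p,p+2)$ for $m=2p$ are immediate, and the inductive step (which in both parity cases has the form $\tau_{i-1}=\gamma_{i-1}\gamma_{m+2-i}\tau_i\gamma_{m+2-i}\gamma_{i-1}$) is conjugation by the product of two disjoint adjacent transpositions $(i-1,i)$ and $(m+2-i,m+3-i)$, which sends $(i,m+2-i)$ to $(i-1,m+3-i)=(i-1,m+2-(i-1))$, as required. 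Since the transpositions $(i,m+2-i)$ for $i=1,\dots,p$ are pairwise disjoint they commute, proving~(1), and their product is the ``reversal'' involution $j\mapsto m+2-j$ on $\{1,\dots,m+1\}$.

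For claim~(2), I first note that $\tau$ and $\varepsilon$ are group automorphisms of the abelian group $G^m$ of order $2$ that commute---a direct check gives $\varepsilon\tau(\g)=\tau\varepsilon(\g)=(g_m^{-1},\dots,g_1^{-1})$---and each preserves the set $\Ss$ setwise: the block $\x_{[k,l)}$ is sent to $\x_{[m+2-l,m+2-k)}$ by $\tau$ and to its inverse (which lies in $\Ss$ because $\Ss=\Ss^{-1}$) by $\varepsilon$. Consequently both are automorphisms of the Cayley graph $\G_m(G)$. To identify $\tau_1\cdots\tau_p$ with $\varepsilon\tau$, I compute their action on the $h$-coordinates: a direct computation from the definitions shows that $\tau$ sends $h_j$ to $h_{m+2-j}^{-1}$ while $\varepsilon$ inverts every $h_j$, so $\varepsilon\tau$ sends $h_j$ to $h_{m+2-j}$, which is precisely the reversal permutation and therefore coincides with $\tau_1\cdots\tau_p$ by the first paragraph.

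For~(3), the identity $\tau_1\cdots\tau_p=\varepsilon\tau$ yields $\tau\in\langle\mathbf{\Gamma}_m,\varepsilon\rangle$ and $\varepsilon\in\langle\mathbf{\Gamma}_m,\tau\rangle$, so the two subgroups coincide. Because each $\gamma_i$ is a group automorphism of the abelian group $G^m$, it commutes with inversion, so $\varepsilon$ is central in $\langle\mathbf{\Gamma}_m,\varepsilon\rangle$, and it remains only to verify $\varepsilon\notin\mathbf{\Gamma}_m$. Here the hypothesis enters: if $G$ is not an elementary abelian $2$-group, pick $g\in G$ with $g^2\neq e$ and take $\g=(g,g^2,e,\dots,e)\in G^m$. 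The multiset of $h$-coordinates of $\g$ is $\{g,g,g^{-2},e,\dots,e\}$, while that of $\varepsilon(\g)=(g^{-1},g^{-2},e,\dots,e)$ is $\{g^{-1},g^{-1},g^2,e,\dots,e\}$; since $g\neq g^{-1}$ these multisets differ, so $\varepsilon(\g)$ cannot be obtained from $\g$ by any permutation of $h$-coordinates, i.e.\ $\varepsilon\notin\mathbf{\Gamma}_m$. The main technical obstacle is the inductive identification of $\tau_i$ with the transposition $(i,m+2-i)$---the starting formulas differ between even and odd $m$---but once the $h$-coordinate viewpoint is set up everything reduces to a routine check about conjugating one transposition by two disjoint ones.
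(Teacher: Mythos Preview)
Your proof is correct, and the introduction of the ``increment'' coordinates $h_j=g_jg_{j-1}^{-1}$ is a genuinely different organizing idea from what the paper does. The paper proves part~(1) exactly as you do---by identifying $\tau_i$ with the transposition $(i,m+2-i)$ under the isomorphism $\mathbf{\Gamma}_m\simeq S_{m+1}$ from Lemma~\ref{abelian}(4)---but it leaves this identification as a ``standard calculation'' rather than carrying out the induction. For part~(2), by contrast, the paper does a brute-force computation of $\g^{\tau_p},\ \g^{\tau_p\tau_{p-1}},\ \dots$ in the original $g$-coordinates, treating $m=2p-1$ and $m=2p$ separately and reading off the pattern; your $h$-coordinate change avoids this entirely, since once you know $\varepsilon\tau$ acts as the reversal $j\mapsto m+2-j$ on the $h_j$ the identity $\tau_1\cdots\tau_p=\varepsilon\tau$ is immediate. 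So your route is cleaner here.

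For part~(3) the paper's argument is shorter than yours: having shown $\varepsilon$ commutes with every $\gamma_i$, it simply observes that $\varepsilon$, if it lay in $\mathbf{\Gamma}_m\simeq S_{m+1}$, would be central there, hence trivial---contradicting the hypothesis that $G$ has an element of order greater than~$2$. Your multiset argument with $\g=(g,g^2,e,\dots,e)$ is perfectly valid, but the center-of-$S_{m+1}$ observation dispatches the point in one line and exploits structure you have already established.
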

%----------------------------------------------------------------------------------
\begin{proof}
\noindent (1) As we noticed in the proof of the previous lemma, the correspondence $(i,i+1)\mapsto \gamma_i$ extends to an isomorphism of $S_{m+1}$ onto $\mathbf{\Gamma}_m$. It follows from standard calculation that in this isomorphism the transpositions $(1,m+1), \ (2,m),\ \dots,\ (p,m-p+1)$ are mapped onto $\tau_1,\ \tau_2,\ \dots,\ \tau_p$ respectively. These transpositions have disjoint supports, so they commute and because of that their immages also commute.   

\medskip 

\noindent (2) Suppose first $m=2p-1$. For an arbitrary $\g\in G^m$ we have
$$\begin{array}{rcl}
\g^{\tau_p}& = &  (g_1,\dots,g_{p-1},g_{p-1}g_p^{-1}g_{p+1},g_{p+1},\dots,g_{2p-1}) \\[6pt] 
\g^{\tau_p\tau_{p-1}} & = & (g_1,\dots,g_{p-2},g_{p-2}g_{p+1}^{-1}g_{p+2},g_{p-2}g_p^{-1}g_{p+2},g_{p-2}g_{p-1}^{-1}g_{p+2},g_{p+2}\dots,g_{2p-1}) \\[6pt]
& & \dots\dots\dots \dots\dots\dots\dots\dots\dots \\ [6pt]
\g^{\tau_p\tau_{p-1}\cdots\tau_{p-i+1}} & =  & 
(g_1,\dots,g_{p-i},g_{p-i}g_{p+i-1}^{-1}g_{p+i},\dots,g_{p-i}g_p^{-1}g_{p+i},\dots, \\
& & \dots,g_{p-i}g_{p-i+1}g_{p+i},g_{p+i},\dots,g_{2p-1}) \\[0pt]
\end{array}$$
and so on. It follows from this schema that
$$\begin{array}{rcl}
\g^{\tau_p\cdots\tau_1}& = & (g_0g_{2p-1}^{-1}g_{2p},\dots,g_0g_{p+1}^{-1}g_{2p},g_{0}g_p^{-1}g_{2p},g_0g_{p-1}^{-1}g_{2p},\dots,g_0g_{1}^{-1}g_{2p}) \\[4pt]
& = & (g_{2p-1}^{-1},\dots,g_{p+1}^{-1},g_p^{-1},g_{p-1}^{-1},\dots,g_{1}^{-1}) \\[4pt]
& = & \g^{\tau\varepsilon}
\end{array}$$
because by our convention $g_0=g_{m+1}=e.$ Hence we get 
$\tau_{p}\tau_{p-1}\cdots\tau_2\tau_1=\tau\varepsilon$. 
\smallskip

Now let $m=2p$. Then
$$\begin{array}{rcl}
\g^{\tau_p} & = & (g_1,\dots,g_{p-1},g_{p-1}g_{p+1}^{-1}g_{p+2},g_{p-1}g_p^{-1}g_{p+2},g_{p+2},\dots,g_{2p}) \\[6pt] 
\g^{\tau_p\tau_{p-1}}& = & (g_1,\dots,g_{p-2},g_{p-2}g_{p+2}^{-1}g_{p+3},g_{p-2}g_{p+1}^{-1}g_{p+3},g_{p-2}g_p^{-1}g_{p+3},\\
 &  &  g_{p-2}g_{p-1}^{-1}g_{p+3},g_{p+3}\dots,g_{2p}) \\ [8pt]
 &  &  \dots\dots\dots \dots\dots\dots\dots\dots\dots\\
\g^{\tau_p\tau_{p-1}\dots\tau_{p-i+1}} & =  &
(g_1,\dots,g_{p-i},g_{p-i}g_{p+i}^{-1}g_{p+i+1},\dots,g_{p-i}g_p^{-1}g_{p+i+1},g_{p-i}g_{p-1}^{-1}g_{p+i+1},\\
& & \dots ,g_{p-i}g_{p-i+1}g_{p+i+1},g_{p+i+1},\dots,g_{2p}) \\[0pt]
\end{array}$$
Again, similarly as in the previous case we have
$$\begin{array}{rcl}
\g^{\tau_p\cdots\tau_1}& = & (g_0g_{2p}^{-1}g_{2p+1},\dots,g_0g_{p+1}^{-1}g_{2p+1},g_{0}g_p^{-1}g_{2p+1},g_0g_{p-1}^{-1}g_{2p+1},\dots,g_0g_{1}^{-1}g_{2p+1}) \\[4pt]
& = & (g_{2p}^{-1},\dots,g_{p+2}^{-1},g_{p+1}^{-1},g_p^{-1},g_{p-1}^{-1},\dots,g_{1}^{-1}) \\[4pt]
& = & \g^{\tau\varepsilon}
\end{array}$$
\medskip

\noindent (3) Assume that $G$ is not an elementary abelian $2$-group. It is clear from (2) that $\varepsilon\in\langle \mathbf{\Gamma}_m,\tau\rangle$ and $\tau\in\langle \mathbf{\Gamma}_m,\varepsilon\rangle$, therefore $\langle \mathbf{\Gamma}_m,\tau\rangle=\langle \mathbf{\Gamma}_m,\varepsilon\rangle $. Since $\varepsilon$ commutes with all elements of $\mathbf{\Gamma}_m$ it does not belong to $\mathbf{\Gamma}_m$ as it is isomorphic to $S_{m+1}$. Hence $$\langle\mathbf{\Gamma}_m,\varepsilon\rangle = \mathbf{\Gamma}_m\times\langle\varepsilon\rangle$$
which ends the proof.
\end{proof}

%----------------------------------------------------------------------------------
%----------------------------------------------------------------------------------

We are  now ready to determine the group of automorphisms $\mathbf{Aut}(\G_m(G))$, in the case when the group $G$ is abelian.

\begin{thm}\label{stabilizer_of_e}
Let $G$ be an abelian group. If either \begin{enumerate}
\item[{\rm (a)}] $m>3$, or
\item[{\rm (b)}] $m=3$ and $G$ is of exponent bigger than $2$, or
\item[{\rm (c)}] $m=2$ and $|G|\neq 3$,
\end{enumerate}
then 
\begin{enumerate}
\item the stabilizer of $\e\in G^m$ in the automorphism group $\mathbf{Aut}(\G_m(G))$ is equal to
$$\mathbf{Aut}_m(G)\times \mathbf{\Gamma}_m \simeq {\mathbf{Aut}}(G)\times S_{m+1};$$
\item the group of all automorphisms of the graph $\G_m(G)$ is equal to
$$\mathbf{Aut}(\G_m(G))=\mathbf{T}_m\rtimes \big(\mathbf{Aut}_m(G)\times \mathbf{\Gamma}_m\big) \simeq G^m\rtimes \big(\mathbf{Aut}(G)\times S_{m+1}\big).$$
\end{enumerate}
\end{thm}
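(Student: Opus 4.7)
The plan is to derive part~(2) from part~(1) and to attack part~(1) by analyzing how an automorphism $\Phi\in\mathbf{Stab}(\e)$ acts on the set of intervals. Vertex-transitivity of $\mathbf{T}_m$ together with $\mathbf{T}_m\cap\mathbf{Stab}(\e)=\{\mathrm{id}\}$ yields the set factorization $\mathbf{Aut}(\G_m(G))=\mathbf{T}_m\cdot\mathbf{Stab}(\e)$. Granting part~(1), each $\Psi\in\mathbf{Stab}(\e)$ is a group automorphism of $G^m$ (the coordinatewise action of $\mathbf{Aut}_m(G)$ trivially so, and each $\gamma_i$ by Lemma~\ref{abelian}(1), using abelianity), hence $\Psi T_\g\Psi^{-1}=T_{\Psi(\g)}\in\mathbf{T}_m$. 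This makes $\mathbf{T}_m$ normal and gives the semidirect product of part~(2).

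For part~(1), fix $\Phi\in\mathbf{Stab}(\e)$. Under each of~(a)--(c) one has $(m,|G|)\neq(2,3)$, so Corollary~\ref{aut_int_clique} forces $\Phi$ to preserve the type of a maximum clique. In particular $\Phi$ permutes the maximum interval cliques containing $\e$, namely the sets $\pr{k}{l}\cup\{\e\}$, and therefore induces a permutation $\sigma$ of the set $\mathcal{I}=\{\pr{k}{l}:1\leq k<l\leq m+1\}$ of intervals. By the adjacency description of $\B_m$ in Proposition~\ref{kneser}, $\sigma$ is an automorphism of the Johnson graph $J(m+1,2)$ (the complement of $KG_{m+1,2}$).

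The heart of the argument is to show $\sigma$ is induced by a permutation $\rho\in S_{m+1}$ of coordinate positions. For $m=2$ the graph $J(3,2)=K_3$ has $\mathrm{Aut}=S_3=S_{m+1}$, and for $m\geq 4$ Whitney's theorem on line graphs of complete graphs gives $\mathrm{Aut}(J(m+1,2))=S_{m+1}$; hence cases~(a) and~(c) are immediate. The genuine obstacle is case~(b), $m=3$, where $J(4,2)$ admits an extra ``complementation'' automorphism $\{i,j\}\mapsto\{1,2,3,4\}\setminus\{i,j\}$. To rule it out I test $\Phi$ on the family of centred dispersed cliques $C(z,1)=\{\z_{[1,2)},\z_{[1,3)},\z_{[1,4)}\}$ with $z\in G^\times$. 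If $\sigma$ were complementation, the image of $C(z,1)\cup\{\e\}$ would be a maximum dispersed clique whose three intervals $\pr{3}{4},\pr{2}{4},\pr{2}{3}$ share no common coordinate, so by Lemma~\ref{dispersed_clique}(3) it must be a triangle clique $\{\y_{[2,3)},\y_{[2,4)},\y_{[3,4)}\}$ with $y^2=e$; in particular $\Phi(\z_{[1,2)})=\y_{[3,4)}$ with $y$ an involution. This holds for \emph{every} $z\in G^\times$, so the bijection $\Phi|_{\pr{1}{2}}\colon\pr{1}{2}\to\pr{3}{4}$ sends every non-identity element of $G$ to an involution, contradicting bijectivity as soon as $G$ contains some element of order $>2$, which is exactly hypothesis~(b).

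Having forced $\sigma\in S_{m+1}$, Lemma~\ref{abelian}(4) produces $\gamma\in\mathbf{\Gamma}_m$ inducing the same permutation of $\mathcal{I}$, so $\Phi\gamma^{-1}$ is a homogeneous automorphism fixing $\e$. Theorem~\ref{morphism} with $H=G$ then gives $\Phi\gamma^{-1}\in\mathbf{Aut}_m(G)$, establishing $\mathbf{Stab}(\e)=\mathbf{Aut}_m(G)\cdot\mathbf{\Gamma}_m$. The product is direct because a coordinatewise computation using abelianity shows each $\gamma_i$ commutes with every coordinatewise-acting $f\in\mathbf{Aut}_m(G)$, and the intersection is trivial since no non-identity $\rho\in S_{m+1}$ fixes every $2$-subset of $\{1,\dots,m+1\}$. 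The main obstacle is case~(b): the exponent hypothesis on $G$ is crucially used to exclude the complementation automorphism of $J(4,2)$.
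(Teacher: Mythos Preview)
Your approach mirrors the paper's: factor $\mathbf{Aut}(\G_m(G))=\mathbf{T}_m\cdot\mathbf{Stab}(\e)$, show that any $\Phi\in\mathbf{Stab}(\e)$ induces an automorphism of the interval graph $\B_m\cong J(m+1,2)$, identify $\mathrm{Aut}(\B_m)$ with $S_{m+1}$ for $m\neq 3$, treat $m=3$ separately via the dispersed-clique classification, and then apply Theorem~\ref{morphism} to the residual homogeneous automorphism.

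There is one small gap in your $m=3$ case. You show that $\sigma$ cannot equal complementation, but $\mathrm{Aut}(J(4,2))\cong S_4\times C_2$ and the nontrivial $S_4$-coset has $24$ elements; excluding just one does not yet force $\sigma\in S_4$. The repair is immediate, in either of two ways. First, your own argument generalises verbatim: \emph{every} $\sigma$ in the nontrivial coset sends the sunflower $\{\{1,2\},\{1,3\},\{1,4\}\}$ to a triangle of $2$-sets with no common element, so the image of $C(z,1)\cup\{\e\}$ is still forced by Lemma~\ref{dispersed_clique} to be a triangle clique built from an involution, and your bijectivity contradiction goes through. Second---and this is how the paper argues---one may reason structurally: the image of $\mathbf{Stab}(\e)$ in $\mathrm{Aut}(\B_3)$ is a subgroup containing the index-$2$ image of $\mathbf{\Gamma}_3$, hence equals either $S_4$ or all of $\mathrm{Aut}(\B_3)$; in the latter case a single coset representative is realised by some $\Phi$, and one derives the contradiction from that specific $\Phi$. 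Incidentally, your bijectivity argument (the restriction $\Phi|_{\pr{1}{2}}$ would land only in involutions) is sharper than the paper's assertion that ``there are no $3$-cliques with vertices from these intervals'', which as stated overlooks that such triangle cliques do exist whenever $G$ has an involution.
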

%----------------------------------------------------------------------------------
\begin{proof}
\noindent (1) It follows immediately from the definitions that for all $f\in\mathbf{Aut}_m(G)$ and all $\gamma_i$, $1\leqslant i\leqslant m$, 
$f\gamma_i=\gamma_i f$. Therefore 
$$\langle \mathbf{Aut}_m(G), \mathbf{\Gamma}_m\rangle =\mathbf{Aut}_m(G)\times \mathbf{\Gamma}_m.$$
Let $\alpha$ be an automorphism of $\G_m(G)$ such that $\e^{\alpha}=\e$. Hence $V(\e)^{\alpha}=V(\e)$ and then $\alpha$ induces an automorphism $\overline{\alpha}$ of the graph $\B_m(G)$ (by Corollary \ref{aut_int_clique}). It is enough to show, that \par\medskip

($*$)\ \ \ {\it $\overline{\alpha}$ is induced by some $\beta\in\mathbf{\Gamma}_m$.} \par
\medskip

\noindent This gives that $f=\alpha\beta^{-1}$ fixes all intervals, that is $f\in\mathbf{Aut}_m(G)$. Hence we obtain that $\alpha=f\beta\in \mathbf{Aut}_m(G)\times \mathbf{\Gamma}_m$. 

\medskip

If $m=2$, then the graph $\B_2(G)$ is a cycle with three vertices 
$$
\pr{1}{2}, \pr{2}{3}, \pr{1}{3}
$$
and its automorphism group is isomorphic to the symmetric group $S_3$. The group $\mathbf{\Gamma}_2$ is isomorphic to $S_3$ and it is easily seen that this group acts on the graph  $\B_2(G)$ faithfully.

\medskip

If $m>3$, the complement $\overline{\B}_m(G)$ of $\B_m(G)$ is isomorphic to the Kneser graph $KG_{m+1,2}$. It is well known that its group of automorphisms is isomorphic to the symmetric group $S_{m+1}$. The action of various automorphisms $\gamma_i$ on the set $S$ induces various non-trivial automorphisms of $\B_m(G)$ by (\ref{gami}), so the natural homomorphism of $\mathbf{\Gamma}_m$ into the group of automorphisms of $\B_m(G)$ is in fact an epimorphism. Hence the condition ($*$) is fulfilled.

\begin{center}
\begin{tikzpicture}[scale=1.0]
\node at (30:3/2.1) {\footnotesize $\bullet$};
\node at (90:3/2.1) {\footnotesize $\bullet$};
\node at (150:3/2.1) {\footnotesize $\bullet$};
\node at (210:3/2.1) {\footnotesize $\bullet$};
\node at (270:3/2.1) {\footnotesize $\bullet$};
\node at (330:3/2.1) {\footnotesize $\bullet$};

\node[right] at (30:3/2.1) {\tiny $\pr{2}{4}$};
\node[above] at (90:3/2.1) {\tiny $\pr{1}{2}$};
\node[left] at (150:3/2.1) {\tiny $\pr{2}{3}$};
\node[left] at (210:3/2.1) {\tiny $\pr{1}{3}$};
\node[below] at (270:3/2.1) {\tiny $\pr{3}{4}$};
\node[right] at (330:3/2.1) {\tiny $\pr{1}{4}$};

\draw (30:3/2.1)--(90:3/2.1)--(150:3/2.1)--(210:3/2.1)--(270:3/2.1)--(330:3/2.1)--(30:3/2.1);
\draw (90:3/2.1)--(210:3/2.1)--(330:3/2.1)--(90:3/2.1); \draw (30:3/2.1)--(150:3/2.1)--(270:3/2.1)--(30:3/2.1);
\node at (270:5/2.1) {\footnotesize $m=3$,\  \  {\rm the graph}\  $\mathscr{B}_3$};
\end{tikzpicture}\hskip1cm
\begin{tikzpicture}[scale=1.0]
\node at (-0.9,0.75) {\footnotesize $\bullet$};
\node at (0,0.75) {\footnotesize $\bullet$};
\node at (0.9,0.75) {\footnotesize $\bullet$};
\node at (0.9,-0.75) {\footnotesize $\bullet$};
\node at (0,-0.75) {\footnotesize $\bullet$};
\node at (-0.9,-0.75) {\footnotesize $\bullet$};

\node[above] at (-0.9,0.75) {\tiny $\pr{1}{2}$}; \node[below] at (-0.9,-0.75) {\tiny $\pr{3}{4}$};
\node[above] at (0,0.75) {\tiny $\pr{2}{3}$}; \node[below] at (0,-0.75) {\tiny $\pr{1}{4}$};
\node[above] at (0.9,0.75) {\tiny $\pr{2}{4}$}; \node[below] at (0.9,-0.75) {\tiny $\pr{1}{3}$};

\draw (-0.9,0.75)--(-0.9,-0.75);\draw (0,0.75)--(0,-0.75);\draw (0.9,0.75)--(0.9,-0.75);

\node at (0,-5/2.1) {\footnotesize $m=3$,\  \  {\rm the graph}\   $\overline{\mathscr{B}}_3$};
\end{tikzpicture}

{\bf Fig.\,10.}
\end{center}

The situation is a little different for $m=3$ (see {\bf Fig.\,10}); the group of automorphisms of $\B_3(G)$ is isomorphic to 
the standard wreath product of the symmetric group $S_3$ and  the cyclic group $C_2$, which is a semidirect product 
$$
\big(C_2\times C_2\times C_2\big)\rtimes S_3
$$ 
with action of $S_3$ by permutations of coordinates. Observe that the last group is isomorphic to the direct product
$S_4\times C_2$. Hence the homomorphism of $\mathbf{\Gamma_3}$ into the group of all automorphisms of $\B_m(G)$ mentioned in the previous paragraph is not an epimorphism. More precisely, $\mathbf{\Gamma}_3$ induces a subgroup of index $2$ in the group of automorphisms of $\B_3(G)$. However, not all automorphisms of $\B_3(G)$ are induced by automorphisms of $\G_3(G)$. For instance a function 
$\varphi: \B_3(G)\rightarrow \B_3(G) $ such that  $\pr{1}{3}^{\varphi}=\pr{2}{4}$ and $\pr{2}{4}^{\varphi}=\pr{1}{3}$, and fixing all other intervals is an automorphism of $\B_3(G)$  (see {\bf Fig.\,10})  which cannot be induced by an automorphism of $\G_3(G)$. This follows from the fact that for an element $x\in G$ such that $x\neq x^{-1}$ the set $\{\x_{[1,2)},\x_{[1,3)},\x_{[1,4)}\}$ is a clique in $\G_3(G)$. If $\alpha$ is an automorphism of $\G_3(G)$ such that $\overline{\alpha}=\varphi$, then $\{\x_{[1,2)}^{\alpha},\x_{[1,3)}^{\alpha},\x_{[1,4)}^{\alpha}\}$ is also a clique in $\G_3(G)$ but 
$\x_{[1,2)}^{\alpha}\in \pr{1}{2},\ \x_{[1,3)}^{\alpha}\in\pr{2}{4},\ \x_{[1,4)}^{\alpha}\in \pr{1}{4}$
and there are not $3$-cliques with vertices from these intervals. Therefore all automorphisms of $\G_3(G)$ induce on $\B_3(G)$ automorphisms forming a subgroup of index $2$ and then they come from $\mathbf{\Gamma}_3(G)$. This means that also in the case $m=3$ the condition ($*$) is fulfilled, provided the group $G$ has exponent bigger than $2$.

\medskip

\noindent (2) We have already noticed that $\mathbf{Aut}_m(G)$ normalizes $\mathbf{T}_m(G)$. It is also easily seen that for all $i$, $1\leqslant i\leqslant m$, and all $\g\in G^m$
$$\gamma_iT_{\g}\gamma_i=T_{\g^{\gamma_i}}.$$
Hence the group $\mathbf{Aut}_m(G)\times \mathbf{\Gamma}_m$ normalizes $\mathbf{T}_m$ and then 
\begin{equation}\label{aut_ab}
\mathbf{T}_m\rtimes \big(\mathbf{Aut}_m(G)\times \mathbf{\Gamma}_m\big)
\end{equation}
is a subgroup of $\mathbf{Aut}(\G_m(G))$.

Now let $\alpha\in\mathbf{Aut}(\G_m(G))$ and suppose that $\g\in G^m$ is such that
$\g^{\alpha}=e$. Then $\e^{T_\g\alpha}=\e$ and by Lemma \ref{stabilizer_of_e} $T_\g\alpha=f\gamma$ for some $f\in\mathbf{Aut}_m(G)$, $\gamma\in\mathbf{\Gamma}_m$. Thus
$$
\alpha=T_{\g^{-1}}f\gamma\in \mathbf{T}_m\rtimes \big(\mathbf{Aut}_m(G)\times \mathbf{\Gamma}_m\big).$$
\end{proof}

%----------------------------------------------------------------------------------
%----------------------------------------------------------------------------------

\begin{rem}
{\em (a)  Let $G=C_2=\langle x\rangle$ be a cyclic group of order $2$ and $m=3$. 
\begin{center}
\begin{tikzpicture}[scale=1.2]
\node at (0:3/2.1) {\footnotesize $\bullet$};
\node at (45:3/2.1) {\footnotesize $\bullet$};
\node at (90:3/2.1) {\footnotesize $\bullet$};
\node at (135:3/2.1) {\footnotesize $\bullet$};
\node at (180:3/2.1) {\footnotesize $\bullet$};
\node at (225:3/2.1) {\footnotesize $\bullet$};
\node at (270:3/2.1) {\footnotesize $\bullet$};
\node at (315:3/2.1) {\footnotesize $\bullet$};

\node[right] at (0:3/2.1) {\tiny $(x,x,x)$};
\node[right] at (45:3/2.1) {\tiny $(x,e,e)$};
\node[above] at (90:3/2.1) {\tiny $(x,e,x)$};
\node[left] at (135:3/2.1) {\tiny $(x,x,e)$};
\node[left] at (180:3/2.1) {\tiny $(e,x,e)$};
\node[left] at (225:3/2.1) {\tiny $(e,e,x)$};
\node[below] at (270:3/2.1) {\tiny $(e,e,e)$};
\node[right] at (315:3/2.1) {\tiny $(e,x,x)$};

\draw (0:3/2.1)--(45:3/2.1)--(90:3/2.1)--(135:3/2.1)--(180:3/2.1)--(225:3/2.1)--(270:3/2.1)--(315:3/2.1)--(0:3/2.1);
\draw (0:3/2.1)--(90:3/2.1)--(180:3/2.1)--(270:3/2.1)--(0:3/2.1);
\draw (225:3/2.1)--(315:3/2.1)--(45:3/2.1)--(135:3/2.1)--(225:3/2.1); 
\draw (0:3/2.1)--(135:3/2.1)--(270:3/2.1)--(45:3/2.1)--(180:3/2.1)--(315:3/2.1)--(90:3/2.1)--(225:3/2.1)--(0:3/2.1);

\node at (270:5/2.1) {\footnotesize The graph $\G_3(C_2)$};
\end{tikzpicture}\hskip1cm
\begin{tikzpicture}[scale=1.2]
\node at (-1.35,0.75) {\footnotesize $\bullet$};
\node at (-0.45,0.75) {\footnotesize $\bullet$};
\node at (0.45,0.75) {\footnotesize $\bullet$};
\node at (1.35,0.75) {\footnotesize $\bullet$};
\node at (1.35,-0.75) {\footnotesize $\bullet$};
\node at (0.45,-0.75) {\footnotesize $\bullet$};
\node at (-0.45,-0.75) {\footnotesize $\bullet$};
\node at (-1.35,-0.75) {\footnotesize $\bullet$};

\node[above] at (-1.35,0.75) {\tiny $(e,e,e)$}; 
\node[below] at (-1.35,-0.75) {\tiny $(x,e,x)$};
\node[above] at (-0.45,0.75) {\tiny $(e,e,x)$}; 
\node[below] at (-0.45,-0.75) {\tiny $(x,e,e)$};
\node[above] at (0.45,.75) {\tiny $(e,x,e)$}; 
\node[below] at (0.45,-0.75) {\tiny $(x,x,x)$};
\node[above] at (1.35,0.75) {\tiny $(e,x,x)$}; 
\node[below] at (1.35,-0.75) {\tiny $(x,x,e)$};

\draw (-1.35,0.75)--(-1.35,-0.75);
\draw (-0.45,0.75)--(-0.45,-0.75);
\draw (0.45,0.75)--(0.45,-0.75);
\draw (1.35,0.75)--(1.35,-0.75);

\node at (0,-5/2.1) {\footnotesize The graph $\overline{\G_3(C_2)}$};
\end{tikzpicture}

{\bf Fig.\,11.}
\end{center}

Then $\mathbf{Aut}(\G_3(C_2))$ is isomorphic to the standard wreath product of the symmetric group $S_4$ and $C_2$, which is a semidirect product 
	$$\big(C_2\times C_2\times C_2\times C_2\big)\rtimes S_4$$
	with action of $S_4$ by permutations of coordinates (see {\bf Fig.\,11}).

 Since $\mathbf{T}_3$ has order $2^3$, the group of automorphisms fixing $\e$ (which fixes also $(x,e,x)$, because this is the unique vertex not adjacent to $\e$) is ismorphic to the direct product $S_4\times C_2$, so it is such as the group of automorphisms of $\B_3(C_2)$. Therefore all automorphisms of $\B_3(C_2)$ are induced by automorphisms of $\G_3(C_2)$.

\medskip

\noindent (b) Let $G=C_3=\{e,x,x^2\}$ be a cyclic group of order $3$ and $m=2$. By Example \ref{m=2} the graph
$\overline{\G_2(C_3)}$ consists of three disjoint triangles, so the automorphism group $\mathbf{Aut}(\G_2(C_3))$ is isomorphic to the standard wreath product of two copies of the symmetric group $S_3$, which is a semidirect product
$$
\big(S_3\times S_3\times S_3\big)\rtimes S_3
$$
with action of $S_3$ by permutations of coordinates. 
}\end{rem}

%----------------------------------------------------------------------------------
%----------------------------------------------------------------------------------

By Proposition \ref{neighbours}(3) we see the crucial difference between graphs $\G_m(G)$
for abelian and non-abelian groups. It follows from this that $G$ is non-abelian if and only if there exist vertices with exactly one path of length $2$ connecting them. In other words, $G$ is non-abelian if and only if the square $A^2$ of the adjacency matrix $A$ of $\G_m(G)$ (where $m\geqslant 3$) has entries equal to $1$.

%----------------------------------------------------------------------------------
%----------------------------------------------------------------------------------

\begin{lem}\label{diff} Let $G$ be a group, $Z(G)$ its center and $m\geqslant 3$. Let $k,l$ be such that $1\leqslant k<l\leqslant m+1$, $l-k\geqslant 2$. Then 
\begin{enumerate}
\item if $x\notin Z(G)$, then for any $y\in G$ such that $xy\neq yx$ the element $\x_{[k,l)}$ is the unique element of the set $V(\e)\cap V(\g)$, where 
$$\g=\y_{[i,k)}(\y\x)_{[k,j)}\x_{[j,l)},\ (1\leqslant i<k)\ \ {\rm or}\ \ \g=\x_{[k,i)}(\y\x)_{[i,l)}\y_{[l,j)},\ (l<j\leqslant m+1).$$
\item if for some $\ga\in G^m\setminus\{\e\}$ either  $\x_{[k,k+1)}\in V(\e)\cap V(\ga)$ or $\x_{[1,m+1)}\in V(\e)\cap V(\ga)$, then  $|V(\e)\cap V(\ga)|>1.$ \vspace{2pt}
\end{enumerate}
\end{lem}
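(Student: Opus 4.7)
The plan is to read both parts of the lemma directly off Proposition 4.5(3) and its companion corollary table, which classifies $V(\e)\cap V(\g)$ for weight-$3$ elements $\g$; no machinery beyond that table is needed.

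For (1), I first observe that the element $\g=\y_{[i,k)}(\y\x)_{[k,j)}\x_{[j,l)}$ has weight $3$, since the three successive entries $y$, $yx$, $x$ are pairwise distinct (the inequalities $y\ne yx$ and $yx\ne x$ use $xy\ne yx$, in particular $y\ne e$). I then match $\g$ to the table's generic pattern $\x'_{[i_1,i_2)}\y'_{[i_2,i_3)}\z'_{[i_3,i_4)}$ via $(x',y',z')=(y,yx,x)$ with $(i_1,i_2,i_3,i_4)=(i,k,j,l)$. The relation $z'=(x')^{-1}y'$ holds tautologically (it reduces to $x=y^{-1}(yx)$), while $z'=y'(x')^{-1}$ is equivalent to $xy=yx$ and so fails; this pins $\g$ into the first alternative of the ``$|V(\e)\cap V(\g)|=1$'' row, which declares the unique common neighbour to be $((x')^{-1}y')_{[i_2,i_4)}=\x_{[k,l)}$. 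The second presentation $\g=\x_{[k,i)}(\y\x)_{[i,l)}\y_{[l,j)}$ is treated symmetrically: the substitution $(x',y',z')=(x,yx,y)$ now triggers the branch $z'=y'(x')^{-1}$, and the table's second entry yields the unique common neighbour $\x'_{[i_1,i_3)}=\x_{[k,l)}$.

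For (2), I argue by contradiction: suppose $|V(\e)\cap V(\ga)|=1$. Proposition 4.5 rules out $\vartheta(\ga)\in\{1,2\}$ (which contribute $|G|+2m-4\ge 2$ and $6$ common neighbours respectively, using $m\ge 3$ and $|G|\ge 2$) and $\vartheta(\ga)\ge 4$ (which contributes none, contradicting the hypothesis that some common neighbour exists). Hence $\vartheta(\ga)=3$, and the relevant row of the corollary table pins the unique common neighbour to either $(\x^{-1}\y)_{[i_2,i_4)}$ or $\x_{[i_1,i_3)}$ with $1\le i_1<i_2<i_3<i_4\le m+1$. In the first form $i_2\ge 2$ and $i_4-i_2\ge 2$; in the second, $i_3\le m$ and $i_3-i_1\ge 2$. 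In both cases the enclosing interval has length at least $2$ (so it cannot be $[k,k+1)$) and lies strictly inside $[1,m+1]$ (so it cannot equal $[1,m+1)$), contradicting the hypothesis on the common neighbour.

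I do not anticipate a genuine obstacle: the whole lemma is bookkeeping on Proposition 4.5, and the only delicate point is verifying that in each presentation of $\g$ in part (1) the non-commutation hypothesis $xy\ne yx$ isolates precisely the correct one of the two ``$z=x^{-1}y$'' or ``$z=yx^{-1}$'' branches, so that the table unambiguously delivers $\x_{[k,l)}$ rather than something else.
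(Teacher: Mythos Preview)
Your argument is correct. For part (1) you do exactly what the paper does: both proofs defer the work to Proposition~\ref{neighbours} and its accompanying table, and your matching of $(x',y',z')$ to the two ``$|V(\e)\cap V(\g)|=1$'' branches is accurate (note in particular that $xy\ne yx$ forces $y\ne x$, so your ``pairwise distinct'' claim holds, though only consecutive inequality is needed for weight~$3$).

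For part (2) your route differs from the paper's. The paper argues constructively: writing $\ga=\y_{[i,j)}\x_{[k,k+1)}$, it exhibits $\y_{[i,j)}$ itself as a second common neighbour by observing that either the supports are disjoint (so the factors commute) or $[k,k+1)\subseteq[i,j)$ (so $\ga=(\y\x\y^{-1})_{[k,k+1)}\y_{[i,j)}$). Your approach instead runs the classification in Proposition~\ref{neighbours} backwards: if $|V(\e)\cap V(\ga)|=1$ then $\vartheta(\ga)=3$ and the unique common neighbour must sit in an interval $[i_2,i_4)$ or $[i_1,i_3)$ with $i_1<i_2<i_3<i_4$, which has length $\ge 2$ and lies strictly inside $[1,m+1]$. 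This is a clean and uniform argument; in particular it treats the $\x_{[1,m+1)}$ case on exactly the same footing as $\x_{[k,k+1)}$, whereas the paper's direct construction is written out only for the latter and requires a separate (unwritten) check for the former.
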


%----------------------------------------------------------------------------------

\begin{proof}
The first part follows immediately from the proof of Proposition \ref{neighbours}. For the proof of the second part note that if $\g=\y_{[i,j)}\x_{[k,k+1)}$ and $j<k$ or $k<i$, then
$\y_{[i,j)} $ and $\x_{[k,k+1)}$ commute and then $\y_{[i,j)}$ is also a neighbour of $\g$. If $i\leqslant k$ and $l\leqslant j$, then $\g= (\y\x^{-1}\y^{-1})_{[k,k+1)}\y_{[i,j)}$ which again shows that $\y_{[i,j)}$ is a neighbour of $\g$.
\end{proof}

%----------------------------------------------------------------------------------
%----------------------------------------------------------------------------------

It is easily seen that the automorphisms $\gamma_i$ ($i=1,\dots,m$) defined in Lemma \ref{abelian} restricted to the subgraphs $\I_m(x)$ are their automorphisms. So is the group $\mathbf{Aut}_m(G)\times \mathbf{\Gamma}_m$ restricted to the subgraph $\V_m(\e)$. But most of these functions are not automorphism of the graph $\G_m(G)$. 

%----------------------------------------------------------------------------------
%----------------------------------------------------------------------------------

\begin{cor}	
If $G$ is non-abelian and $m\geqslant 2$, then no $\gamma_i$, $i=1,\dots,m$, is an automorphism of the graph $\G_m(G)$. 	
\end{cor}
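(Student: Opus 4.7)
The plan is to show that for each $i\in\{1,\dots,m\}$ there is an edge of $\G_m(G)$ that is destroyed by $\gamma_i$. The key structural observation I would use is that $\gamma_i$ modifies only the $i$-th coordinate: for any $\g\in G^m$, $\gamma_i(\g)$ agrees with $\g$ off position $i$ and at position $i$ takes the value $g_{i-1}g_i^{-1}g_{i+1}$ (with the convention $g_0=g_{m+1}=e$). Consequently, if $\h=\mathbf{z}_{[k,l)}\g$ is a neighbour of $\g$, then $\gamma_i(\h)\gamma_i(\g)^{-1}$ coincides with $\mathbf{z}_{[k,l)}$ outside coordinate $i$, while at coordinate $i$ it picks up a modification of $z$ by the ``block'' $u=g_{i-1}g_i^{-1}$. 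In the abelian case this modification is trivial, which is why Lemma \ref{abelian} gives $\gamma_i\in\mathbf{Aut}(\G_m(G))$; for non-abelian $G$ I will engineer $\g$ and $\mathbf{z}_{[k,l)}$ so that the resulting difference vector fails to be constant on any interval, and hence leaves $\mathcal{S}$.

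Fix $x,y\in G$ with $xy\neq yx$. For $1\leq i\leq m-1$ I would take $\g=\y_{[i,i+1)}$ and $\h=\x_{[i+1,i+2)}\g$; the interval $[i+1,i+2)$ exists because $i+2\leq m+1$. A direct computation of $\gamma_i(\g)$ and $\gamma_i(\h)$ from the formula above shows that the difference $\gamma_i(\h)\gamma_i(\g)^{-1}$ has value $y^{-1}xy$ at coordinate $i$ and $x$ at coordinate $i+1$ (and $e$ elsewhere). For this to lie in $\mathcal{S}$ the two non-identity entries would have to coincide, but $y^{-1}xy=x$ is equivalent to $xy=yx$, a contradiction.

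For $i=m$ the preceding construction is unavailable because position $m+1$ does not exist, so I would instead place the supporting interval to the \emph{left} of $m$: take $\g=\y_{[m-1,m)}$ and $\h=\x_{[m-1,m+1)}\g$, which is adjacent since $[m-1,m+1)$ is a valid interval (using $m\geq 2$). The same sort of direct computation gives $\gamma_m(\h)\gamma_m(\g)^{-1}$ with entries $x$ at coordinate $m-1$ and the commutator $[x,y]=xyx^{-1}y^{-1}$ at coordinate $m$. Membership in $\mathcal{S}$ would then force either $[x,y]=e$ (i.e.\ $xy=yx$) or $[x,y]=x$ (which simplifies to $x=e$), both ruled out.

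The hard part is precisely this boundary behaviour at $i=m$: the uniform interior construction breaks down because $\gamma_m$'s defining formula collapses from $g_{m-1}g_m^{-1}g_{m+1}$ to $g_{m-1}g_m^{-1}$. One is tempted to deduce the $i=m$ case from the $i=1$ case via the coordinate-reversal automorphism $\tau\colon(g_1,\dots,g_m)\mapsto(g_m,\dots,g_1)$, but in the non-abelian setting $\tau\gamma_i\tau^{-1}$ need not equal $\gamma_{m+1-i}$, so a genuinely separate (but still short) computation at $i=m$ is required. Once both cases are handled, every $\gamma_i$ has been shown to violate the adjacency-preservation condition, completing the proof.
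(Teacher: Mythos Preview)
Your proof is correct. The explicit edges you exhibit are destroyed by $\gamma_i$ exactly as you claim, and the case split at $i=m$ is handled cleanly; your side remark that $\tau\gamma_i\tau\neq\gamma_{m+1-i}$ in the non-abelian setting is also accurate and nicely explains why the boundary case cannot simply be reduced to the interior one by symmetry.

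Your route differs from the paper's. For $m\geqslant 3$ the paper does not produce an explicit broken edge; instead it invokes Lemma~\ref{diff}: each $\gamma_j$ sends some width-one element $\x_{[i,i+1)}$ to a width-two element $\x_{[i-1,i+1)}$ or $\x_{[i,i+2)}$, and Lemma~\ref{diff} shows that width-two elements (for $x\notin Z(G)$) can occur as the \emph{unique} common neighbour of $\e$ and some $\g$, whereas width-one elements never can. Since this invariant is preserved by graph automorphisms, $\gamma_j$ cannot be one. Only for $m=2$ does the paper resort to a direct computation of the same flavour as yours. Your argument has the advantage of being uniform in $m$ and entirely self-contained, bypassing Lemma~\ref{diff}; the paper's argument has the advantage of reusing a structural lemma that is needed elsewhere anyway (in the determination of $\mathbf{Aut}(\G_m(G))$ for non-abelian $G$).
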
	

%----------------------------------------------------------------------------------

\begin{proof}
Let $m>3$ and $x\notin Z(G)$. Then by (\ref{gami})  $\x_{[i,i+1)}^{\gamma_{i-1}} = \x_{[i-1,i+1)}$ and $\x_{[i,i+1)}^{\gamma_{i+1}} = \x_{[i,i+2)}$ which is not possible by Lemma \ref{diff}. If $m=2$, then and $x,\ y$ are non-commuting elements of $G$, then $(x,y)\sim (x,e)$ but $(x,y)^{\gamma_1}=(x^{-1}y,y)\not\sim (x,e)=(x,e)^{\gamma_1}$. Similarly $(x,y)\sim (e,x^{-1}y)$ and $(x,y)^{\gamma_2}=(x,xy^{-1})\not\sim (e,y^{-1}x)=(e,x^{-1}y)^{\gamma_2}$.
\end{proof}

%----------------------------------------------------------------------------------
%----------------------------------------------------------------------------------

\begin{lem}\label{dihedral}
 Let  $G$ be a group and  $m\geqslant 2$. Then
\begin{enumerate}
\item the map $\tau\colon G^m\to G^m$ given by $$(g_1,g_2,\dots,g_m)^\tau=(g_m,g_{m-1},\dots,g_2,g_1)$$ is an automorphism  of order two of  $\G_m(G)$,
\item The map $\omega\colon G^m\to G^m$ given by
$$(g_1,g_2,\dots,g_m)^\omega=(g_1^{-1}g_2,g_1^{-1}g_3,\dots,g_1^{-1}g_m,g_1^{-1})$$ is an automorphism of order 
$m+1$ of  $\G_m(G)$,
\item  the subgroup  $\mathbf{\Delta}_m=\langle\tau,\omega\rangle$ of $\mathbf{Aut}(\G_m(G))$  is isomorphic to the dihedral group $D_{m+1}$.
\end{enumerate}
\end{lem}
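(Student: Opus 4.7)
My plan is to verify directly that $\omega$ and $\tau$ satisfy the defining relations of $D_{m+1}$ as automorphisms of $\G_m(G)$, and then to check that the resulting surjection $D_{m+1}\twoheadrightarrow\mathbf{\Delta}_m$ has no kernel.

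Part (1) is routine: $\tau^2=\mathrm{id}$ by inspection, and if $\h=\x_{[k,l)}\g$, then a direct entry-wise comparison yields $\h^\tau=\x_{[m+2-l,\,m+2-k)}\g^\tau$, so $\tau$ is an edge-preserving involution.

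For Part (2), the cleanest bookkeeping device is the bijection $\Phi\colon G^m\to G^{m+1}/G$ (with $G$ acting diagonally by left multiplication) given by $\Phi(g_1,\dots,g_m)=[e,g_1,\dots,g_m]$. Under $\Phi$, intervals of $G^m$ correspond to cyclic arcs of $\{0,1,\dots,m\}$ not containing $0$, and adjacency in $\G_m(G)$ becomes: $[\ga]\sim[\gb]$ iff there exist a proper cyclic arc $J\subseteq\{0,\dots,m\}$ and $x\in G^\times$ such that, in some pair of representatives, $b_i=xa_i$ for $i\in J$ and $b_i=a_i$ otherwise. In this picture $\omega$ realises the cyclic shift $[a_0,a_1,\dots,a_m]\mapsto[a_1,\dots,a_m,a_0]$ followed by renormalisation to put $e$ in the $0$-coordinate, while $\tau$ realises the reflection $i\mapsto -i\pmod{m+1}$. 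Both manifestly preserve cyclic arcs, so both are graph automorphisms; and the cyclic shift iterated $m+1$ times is the identity, giving $\omega^{m+1}=\mathrm{id}$. The relation $\tau\omega\tau=\omega^{-1}$ is the familiar dihedral identity (conjugation of a cyclic shift by a reflection inverts it), or equivalently a direct check yielding $\g^{\tau\omega\tau}=(g_m^{-1},g_m^{-1}g_1,\dots,g_m^{-1}g_{m-1})$. As a self-contained alternative one may prove by induction on $k$ that $\g^{\omega^k}$ has $i$-th entry $g_k^{-1}g_{(i+k)\bmod(m+1)}$ (with $g_0=e$), from which both $\omega^{m+1}=\mathrm{id}$ and the dihedral relation fall out.

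Hence $\langle\omega,\tau\rangle$ is a homomorphic image of $D_{m+1}$; the remaining issue, and the main technical point, is to exhibit $2(m+1)$ distinct elements in $\mathbf{\Delta}_m$. For this, fix any $x\in G^\times$ and set $\g=(x,e,\dots,e)$: the iterates $\g^{\omega^k}$ for $0\le k\le m$ are pairwise distinct (at $k=1$ every entry equals $x^{-1}$, while for $2\le k\le m$ the unique non-identity entry $x$ sits at position $m+2-k$), so $\omega$ has order exactly $m+1$. If one had $\tau=\omega^j$ for some $j$, then $\tau$ would commute with $\omega$, and combined with $\tau\omega\tau=\omega^{-1}$ this would force $\omega^2=\mathrm{id}$, contradicting $m\ge 2$. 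Therefore $\tau\notin\langle\omega\rangle$, so $|\mathbf{\Delta}_m|=2(m+1)$ and $\mathbf{\Delta}_m\simeq D_{m+1}$.
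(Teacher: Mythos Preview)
Your proof is correct and complete. The paper's route is more computational: it proves by induction the explicit formula
\[
\g^{\omega^k}=(g_k^{-1}g_{k+1},\dots,g_k^{-1}g_m,\,g_k^{-1},\,g_k^{-1}g_1,\dots,g_k^{-1}g_{k-1}),
\]
equivalent to your closed form with $g_0=e$, and then checks that $\omega$ preserves adjacency by a three-case analysis of how $\h=\x_{[k,s)}\g$ transforms (splitting on whether $k=1$ and whether $s=m+1$). Your identification of $G^m$ with $G^{m+1}/G$ via $\Phi(\g)=[e,g_1,\dots,g_m]$ is a genuinely different and more conceptual device: once adjacency is rewritten as ``some proper cyclic arc of $\{0,\dots,m\}$ is left-multiplied by a fixed $x\in G^\times$'' (this equivalence is routine but is the crux, and you might spell out the one-line reduction of an arc through $0$ to its complement), the fact that the cyclic shift and the reflection $i\mapsto -i$ permute cyclic arcs makes both the graph-automorphism property and the relation $\tau\omega\tau=\omega^{-1}$ immediate, with no cases. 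Your picture also explains \emph{why} one sees $D_{m+1}$ rather than $S_{m+1}$ here: the relevant symmetry is that of the cycle on $\{0,\dots,m\}$, not of an abstract $(m{+}1)$-set. Finally, you are more careful than the paper in checking that $|\mathbf{\Delta}_m|=2(m+1)$ exactly; the paper derives $\omega^{-1}\tau=\tau\omega$ and then simply asserts that $\langle\tau,\omega\rangle\simeq D_{m+1}$, leaving the exclusion of proper quotients implicit.
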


%----------------------------------------------------------------------------------

\begin{proof}

\noindent (1) Let $\g,\h\in G^m$ be such that $\g\sim\h$. Take $x\in G^\times$ and integers $k<s$ such that 
$\h=\x_{[k,s)}\cdot\g$. It is clear that
the element ${\x_{[k,s)}}^\tau= \x_{[m+2-s,m+2-k)}$ satisfies $\h^\tau={\x_{[k,s)}}^\tau\g^\tau$. Therefore $\g^\tau\sim\h^\tau.$

\medskip

\noindent (2) Using induction we will prove that for $\g=(g_1,g_2,\dots,g_m)$ and $k=1,2,...,m$ 
\begin{equation}\label{it}
\g^{\omega^k}=(g_k^{-1}g_{k+1},g_k^{-1}g_{k+2},\dots,g_k^{-1}g_{m},g_k^{-1},g_k^{-1}g_{1},\dots,g_k^{-1}g_{k-1}).
\end{equation}
The case $k=1$  is clear by the definition of $\omega$ (we assume that $g_0=e$). Let $k\geqslant 1$. Assuming 
(\ref{it}) we obtain
$$
\begin{array}{rl}
\g^{\omega^{k+1}} = & (g_k^{-1}g_{k+1},g_k^{-1}g_{k+2},\dots,g_k^{-1}g_{m},g_k^{-1},g_k^{-1}g_{1},\dots,g_k^{-1}g_{k-1})^\omega \\ 
=  & (g_{k+1}^{-1}g_kg_k^{-1}g_{k+2},g_{k+1}^{-1}g_kg_k^{-1}g_{k+3},\dots,g_{k+1}^{-1}g_kg_k^{-1}g_{m},g_{k+1}^{-1}g_kg_k^{-1},\dots,g_{k+1}^{-1}g_k)\\ 
= &
(g_{k+1}^{-1}g_{k+2},g_{k+1}^{-1}g_{k+3},\dots,g_{k+1}^{-1}g_{m},g_{k+1}^{-1},\dots,g_{k+1}^{-1}g_{k})
\end{array}
$$
concluding the proof of (\ref{it}). As a result  $\g^{\omega^m}=(g_m^{-1},g_m^{-1}g_1,g_m^{-1}g_2,\dots, g_m^{-1}g_{m-1})$ and
$\g^{\omega^{m+1}}=\g$. Therefore $\omega^{m+1}=\mathbf{id}$.

\medskip

We now show that $\omega$ is a graph automorphism. Let $\g,\h\in G^m$ be such that  $\g\sim\h$. Take $x\in G^\times$  and integers $k<s$ such that $\h=\x_{[k,s)}\cdot\g$.

\medskip

\noindent{\bf Case 1:} $k=1$, $s=m+1$.  Then $h_1^{-1}=g_1^{-1}x^{-1}$ and
$$
\begin{array}{rl}
\h^\omega & = (h_1^{-1}h_2,h_1^{-1}h_3,\dots,h_1^{-1}h_m,h_1^{-1})
 =(g_1^{-1}x^{-1}xg_2,g_1^{-1}x^{-1}xg_3,\dots,g_1^{-1}x^{-1}xg_m, g_1^{-1}x^{-1})\\ 
& =(g_1^{-1}g_2,g_1^{-1}g_3,\dots,g_1^{-1}g_m, g_1^{-1}x^{-1})=(g_1^{-1}g_2,g_1^{-1}g_3,\dots,g_1^{-1}g_m, x^{g_1} g_1^{-1}).
\end{array}
$$
Therefore  $\h^\omega=(\x^g)_{[m,m+1)}\cdot\g^\omega$. Consequently $\g^\omega\sim \h^\omega$.

\bigskip

\noindent{\bf Case 2:} $k=1$ and  $s<m+1$. In this case also $h_1^{-1}=g_1^{-1}x^{-1}$ and
$$
\begin{array}{rl}
\h^\omega = & (h_1^{-1}h_2,h_1^{-1}h_3,\dots,h_1^{-1}h_{s-1},h_1^{-1}h_{s}, \dots,h_1^{-1}h_m,h_1^{-1})\\ 
= &
(g_1^{-1}x^{-1}xg_2,g_1^{-1}x^{-1}xg_3,\dots,g_1^{-1}x^{-1}xg_{s-1},g_1^{-1}x^{-1}g_{s}, \dots,g_1^{-1}x^{-1}g_m,g_1^{-1}x^{-1})\\  
= &
(g_1^{-1}g_2,g_1^{-1}g_3,\dots,g_1^{-1}g_{s-1},g_1^{-1}x^{-1}g_{s}, \dots,g_1^{-1}x^{-1}g_m,g_1^{-1}x^{-1})\\
= &
(g_1^{-1}g_2,g_1^{-1}g_3,\dots,g_1^{-1}g_{s-1},(x^{-1})^{g_1}g_1^{-1}g_{s}, \dots,(x^{-1})^{g_1}g_1^{-1}g_m,(x^{-1})^{g_1}g_1^{-1}).
\end{array}
$$
It means that $\h^\omega={(\x^{-1})^{g_1}}_{[s,m+1)}\cdot\g^\omega$. Therefore $\g^\omega\sim \h^\omega$.

\bigskip

\noindent{\bf Case 3:}  $1<k< s\leqslant m+1$. In this case $h_1^{-1}=g_1^{-1}$ and
$$
\begin{array}{rl}
\h^\omega= & (h_1^{-1}h_2,h_1^{-1}h_3,\dots,h_1^{-1}h_{k-1},h_1^{-1}h_k, \dots,h_1^{-1}h_{s-1},h_1^{-1}h_{s}, \dots,h_1^{-1}h_m,h_1^{-1})\\ 
= &
(g_1^{-1}g_2,g_1^{-1}g_3,\dots,g_1^{-1}g_{k-1},g_1^{-1}xg_k, \dots,g_1^{-1}xg_{s-1},g_1^{-1}g_{s}, \dots,g_1^{-1}g_m,g_1^{-1})\\
= &
(g_1^{-1}g_2,g_1^{-1}g_3,\dots,g_1^{-1}g_{k-1},x^{g_1}g_1^{-1} g_k, \dots,x^{g_1}g_1^{-1}g_{s-1},g_1^{-1}g_{s}, \dots,g_1^{-1}g_m,g_1^{-1}).
\end{array}
$$
Therefore  $\h^\omega=({\x}^{g_1})_{[k,s)}\cdot\g^\omega$ and  hence $\g^\omega\sim \h^\omega$. 
As a result $\omega$ is a graph automorphism of $\mathscr{G}_m(G)$.

\bigskip
\noindent 3. By (\ref{it}) for $k=m$ it follows that for any $\g\in G^m$
$$
\begin{array}{rl}
\g^{\omega^m\tau}= & (g_m^{-1},g_m^{-1}g_1,g_m^{-1}g_2,\dots, g_m^{-1}g_{m-1})^\tau
=(g_m^{-1}g_{m-1},g_m^{-1}g_{m-2},\dots,g_m^{-1}g_1,g_m^{-1})\\ 
= &
(g_m,g_{m-1},\dots,g_2,g_1)^\omega=\g^{\tau\omega}.
\end{array}
$$
Thus $\omega^{-1}\tau=\omega^m\tau=\tau\omega$ and  therefore automorphisms $\tau$ and $\omega$  generate the dihedral group $D_{m+1}$. 
\end{proof}

%----------------------------------------------------------------------------------
%----------------------------------------------------------------------------------

\begin{lem}\label{omega_n_normalize}
	If $G$ is a non-abelian group, then
\begin{enumerate}
 	\item the automorphism $\omega$ of the graph $\G_m(G)$ is not an automorphism of the group $G^m$;
 	\item for an arbitrary $\g=(g_1,\dots,g_m)\in G^m$
 	$$\omega^{-1}T_{\g}\omega=T_{\g^{\omega}}f_{g_1},$$
 	where $f_{g_1}$ is the inner automorphism of $G$ induced by conjugation by $g_1$:
 	$$(x_1,\ldots,x_m)^{f_{g_1}}=(g_1^{-1}x_1g_1^{-1},\dots,g_1^{-1}x_mg_1).$$
In particular, $\omega$ does not normalize $\mathbf{T}_m(G)$.
 \end{enumerate} 
\end{lem}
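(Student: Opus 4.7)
For part (1), the obstruction to $\omega$ being a group endomorphism of $G^m$ lives entirely in the last coordinate. Given $a,b\in G$ with $ab\neq ba$ (which exist because $G$ is non-abelian), set $\g=\ga_{[1,2)}$ and $\h=\gb_{[1,2)}$, so that $\g\h=(ab,e,\dots,e)$. A direct reading of the definition of $\omega$ yields last coordinates $a^{-1}$ and $b^{-1}$ in $\g^\omega$ and $\h^\omega$ respectively, so the last coordinate of $\g^\omega\h^\omega$ equals $a^{-1}b^{-1}$, while that of $(\g\h)^\omega$ equals $(ab)^{-1}=b^{-1}a^{-1}$. By choice of $a,b$ these differ, so $\omega$ is not a group homomorphism.

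For part (2), the verification is a direct coordinate-by-coordinate computation on an arbitrary element $\h=(h_1,\dots,h_m)\in G^m$. Specializing formula~(\ref{it}) from the proof of Lemma~\ref{dihedral} to $k=m$ gives
$$\h^{\omega^{-1}}=\h^{\omega^{m}}=(h_m^{-1},\,h_m^{-1}h_1,\,h_m^{-1}h_2,\,\ldots,\,h_m^{-1}h_{m-1}).$$
Right-translating this by $\g$ produces the tuple $\k:=(h_m^{-1}g_1,\,h_m^{-1}h_1g_2,\,\dots,\,h_m^{-1}h_{m-1}g_m)$, and then applying $\omega$ (which prepends the inverse of the first coordinate to every other entry and places this inverse in the last slot) makes the $h_m^{-1}$ factors telescope, yielding
$$\h^{\omega^{-1}T_{\g}\omega}=(g_1^{-1}h_1g_2,\,g_1^{-1}h_2g_3,\,\ldots,\,g_1^{-1}h_{m-1}g_m,\,g_1^{-1}h_m).$$
I then evaluate the right-hand side of the asserted identity on the same $\h$: the translation by $\g^\omega=(g_1^{-1}g_2,\dots,g_1^{-1}g_m,g_1^{-1})$ together with the coordinate-wise conjugation $x\mapsto g_1^{-1}xg_1$ encoded by $f_{g_1}$ produce inserted $g_1 g_1^{-1}$ pairs that telescope coordinatewise to exactly the tuple above. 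Matching coordinates proves the identity.

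The final clause is a short consequence: if $\omega^{-1}T_{\g}\omega$ were some right translation $T_{\k}\in\mathbf{T}_m(G)$, then combining with the identity just proved forces the group automorphism $f_{g_1}$ to be a right translation, hence (as any translation fixing $\e$ is the identity) forces $f_{g_1}=\mathbf{id}$, i.e.\ $g_1\in Z(G)$. Choosing $g_1\notin Z(G)$ --- possible since $G$ is non-abelian --- gives a contradiction, so $\omega$ does not normalize $\mathbf{T}_m(G)$.

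The only delicate point is bookkeeping in the coordinate computation of part (2); one must carefully track the cyclic-shift-and-inversion nature of $\omega$ (in particular the iterated formula for $\omega^{-1}$) together with the distinguished role played by the first coordinate of $\g$. There are no conceptual obstacles.
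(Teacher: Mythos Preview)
Your approach is essentially the paper's: a direct coordinate computation of $\h^{\omega^{-1}T_{\g}\omega}$ using the iterated formula (\ref{it}) for $\omega^{-1}=\omega^{m}$, followed by matching against the right-hand side. Part~(1) and the final clause are correct and match the paper.

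There is one genuine point to watch in part~(2). The displayed identity in the lemma has the factors in the order $T_{\g^{\omega}}f_{g_1}$, but if you literally compute $\h^{T_{\g^{\omega}}f_{g_1}}$ --- translate first, then conjugate --- the $i$-th coordinate (for $i<m$) becomes
\[
g_1^{-1}\bigl(h_i\,g_1^{-1}g_{i+1}\bigr)g_1 \;=\; g_1^{-1}h_i\,g_1^{-1}g_{i+1}g_1,
\]
which matches $g_1^{-1}h_ig_{i+1}$ only when $g_1$ commutes with $g_{i+1}$. The telescoping you describe (an inserted $g_1g_1^{-1}$ pair that cancels) occurs for the \emph{opposite} order $f_{g_1}T_{\g^{\omega}}$: conjugate first to get $g_1^{-1}h_ig_1$, then right-multiply by $g_1^{-1}g_{i+1}$ to obtain $g_1^{-1}h_ig_{i+1}$. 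Indeed the paper's own computation terminates with
\[
(x_1,\ldots,x_m)^{\omega^{-1}T_{\g}\omega}=(x_1,\ldots,x_m)^{f_{g_1}T_{\g^{\omega}}},
\]
so the printed statement has the two factors transposed. Your plan is sound provided you record the identity in this corrected order; the conclusion that $\omega$ fails to normalize $\mathbf{T}_m(G)$ is unaffected, since $f_{g_1}T_{\g^{\omega}}\in\mathbf{T}_m(G)$ would still force $f_{g_1}$ to be a translation fixing $\e$, hence the identity, hence $g_1\in Z(G)$.
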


%----------------------------------------------------------------------------------

\begin{proof}
	(1) For $\g=(g_1,g_2,\dots,g_{m-1},g_m)$ and $\h=(h_1,h_2,\dots,h_{m-1},h_m)$ we have
	$$
	\begin{array}{rcl}
	(\g\h)^{\omega} & = & (g_1h_1,g_2h_2,\dots,g_{m-1}h_{m-1},g_mh_m)^{\omega} = \\
	&& (h_1^{-1}g_1^{-1}g_2h_2,h_1^{-1}g_1^{-1}g_3h_3,\dots,h_1^{-1}g_1^{-1}g_{m-1}h_{m-1},h_1^{-1}g_1^{-1})\\[4pt]

	\g^{\omega}\h^{\omega} & = & (g_1^{-1}g_2,g_1^{-1}g_3,\dots,g_1^{-1}g_{m-1},g_1^{-1})\times \\
	&& (h_1^{-1}h_2,h_1^{-1}h_3,\dots,h_1^{-1}h_{m-1},h_1^{-1}) = \\
	&& (g_1^{-1}g_2h_1^{-1}h_2,g_1^{-1}g_3h_1^{-1}h_3,\dots,g_1^{-1}g_{m-1}h_1^{-1}h_{m-1},g_1^{-1}h_1^{-1})\\
		
	\end{array}
	$$
So, if $g_1$ and $h_1$ do not commute, $(\g\h)^{\omega}\neq\g^{\omega}\h^{\omega}$.

\medskip

(2) By (\ref{it})
$$
\begin{array}{rcl}
(x_1,x_2,\ldots,x_m)^{\omega^{-1}T_{\g}\omega} & = & (x_m^{-1},x_m^{-1}x_1,x_m^{-1}x_2,\ldots,x_m^{-1}x_{m-2},x_m^{-1}x_{m-1})^{T_{\g}\omega}\\[4pt]
& = & (x_m^{-1}g_1,x_m^{-1}x_1g_2,x_m^{-1}x_2g_3,\ldots,x_m^{-1}x_{m-2}g_{m-1},x_m^{-1}x_{m-1}g_m)^{\omega}\\[4pt]
& = & (g_1^{-1}x_1g_2,g_1^{-1}x_2g_3,g_1^{-1}x_3g_4,\ldots,g_1^{-1}x_{m-1}g_{m},g_1^{-1}x_m)\\[4pt]
& = & (x_1^{g_1}(g_1^{-1}g_2),x_2^{g_1}(g_1^{-1}g_3),x_3^{g_1}(g_1^{-1}g_4),\ldots,x_{m-1}^{g_1}(g_1^{-1}g_{m}),x_m^{g_1}g_1^{-1})\\[4pt]
& = & (x_1,x_2,x_3,\ldots,x_{m-1},x_m)^{f_{g_1}T_{\g^{\omega}}},\\
\end{array}
$$

\end{proof}
\bigskip

%----------------------------------------------------------------------------------
%----------------------------------------------------------------------------------

\begin{lem}\label{omega_fixed_points}
Let $G$ be a non-abelian group. Then

\begin{enumerate}
	\item every fixed point of $\omega$ has the form $(g,g^2,\dots,g^m)$, where $g\in G$ is an element of order dividing $m+1$. In particular, if ${\rm gcd}(m+1,|G|)=1$, then $\e$ is the unique fixed point of $\omega$ and it is the unique fixed point of $\mathbf{\Delta}_{m}=\langle\omega,\ \tau\rangle$;
	\item  for every $x\in G^{\times}$ the subgraph $\I_m(x)$ is invariant under action of $\mathbf{\Delta}_m$. 

 	\end{enumerate} 	
\end{lem}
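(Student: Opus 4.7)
The plan is to handle the two assertions separately, both as direct computations from the explicit formulas established in Lemma~\ref{dihedral}.

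For part (1), I will use the defining formula $\g^\omega=(g_1^{-1}g_2,g_1^{-1}g_3,\dots,g_1^{-1}g_m,g_1^{-1})$. Equating $\g^\omega=\g$ coordinate by coordinate gives the recursion $g_{k+1}=g_1 g_k$ for $k=1,\dots,m-1$, together with $g_m=g_1^{-1}$. By an immediate induction the first relation yields $g_k=g_1^k$ for all $k=1,\dots,m$, and the last coordinate then forces $g_1^m=g_1^{-1}$, i.e. $g_1^{m+1}=e$. Setting $g=g_1$ delivers the required form $(g,g^2,\dots,g^m)$ with $o(g)\mid m+1$. If $\gcd(m+1,|G|)=1$, the only solution to $g^{m+1}=e$ in $G$ is $g=e$, so $\e$ is the unique fixed point of $\omega$. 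Since $\e^{\tau}=\e$ as well and every fixed point of $\mathbf{\Delta}_m=\langle\omega,\tau\rangle$ must in particular be fixed by $\omega$, the second claim of part (1) follows at once.

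For part (2), it suffices to check that both generators $\tau$ and $\omega$ preserve $I_m(x)$. The action of $\tau$ is straightforward: by its definition $\x_{[k,l)}^{\tau}=\x_{[m+2-l,\,m+2-k)}$, and analogously for $\x^{-1}_{[k,l)}$, so each interval is merely relabelled and $I_m(x)$ is stabilised. For $\omega$ I split $\x_{[k,l)}$ into three cases according to whether $g_1=e$ or $g_1=x$. When $k>1$ the first coordinate of $\x_{[k,l)}$ is $e$, so $\omega$ simply left-shifts the tuple and appends $e$; a direct inspection shows $\x_{[k,l)}^{\omega}=\x_{[k-1,\,l-1)}\in I_m(x)$. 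When $k=1$ and $l\leqslant m$, the first coordinate is $x$ and a routine coordinatewise calculation gives $\x_{[1,l)}^{\omega}=\x^{-1}_{[l-1,\,m+1)}\in I_m(x)$. When $k=1$ and $l=m+1$, the same calculation yields $\x_{[1,m+1)}^{\omega}=\x^{-1}_{[m,m+1)}\in I_m(x)$. Applying the same analysis to the inverse vertices $\x^{-1}_{[k,l)}$ (which just swaps $x$ with $x^{-1}$ throughout) shows $\omega$ sends them back into $I_m(x)$. Hence $\omega(I_m(x))\subseteq I_m(x)$, and since $\omega$ is a bijection on a finite set, equality holds.

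The computations are elementary; the only slightly delicate point is the case split in the $\omega$-action on $I_m(x)$, where the behaviour changes depending on whether the first coordinate of the vertex is $e$ or $x^{\pm 1}$. Once that is properly organised, invariance is immediate and the lemma follows.
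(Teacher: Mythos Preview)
Your proof is correct and follows essentially the same route as the paper: part~(1) is the same coordinate-by-coordinate recursion, and part~(2) is the same direct computation of how $\tau$ and $\omega$ act on the vertices $\x_{[k,l)}^{\pm 1}$. The only cosmetic difference is that the paper lists the full $\langle\omega\rangle$-orbit of $\x_{[k,l)}$ in one stroke, whereas you verify a single application of $\omega$ case-by-case and then invoke bijectivity on a finite set; both amount to the same calculation.
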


%----------------------------------------------------------------------------------

\begin{proof}
	(1) If $(g_1,g_2,\dots,g_{m-1},g_m)^{\omega}=(g_1,g_2,\dots,g_{m-1},g_m)$, then by the definition of $\omega$, 
$$g_1=g_1^{-1}g_2,\ g_2=g_1^{-1}g_3,\ \dots,\ g_{m-1}=g_1^{-1}g_m,\ g_m=g_1^{-1}. $$
Hence for $k=1,2,\dots,m$ we have $g_k=g_1^k$. In particular $g_1^{-1}=g_m=g_1^m$, hence $g_1^{m+1}=e.$	
\medskip

(2) Let $x\in G^{\times}$ and $1\leqslant k<l\leqslant m+1$. Then
$$ \begin{array}{l}
\{\x_{[k,l)},\, \x_{[k-1,l-1)},\, \dots,\, \x_{[1,l-k+1)},\, \x_{[l-k,m+1)}^{-1},\, \x_{[l-k-1,m)}^{-1},\,\dots,\\[6pt] 
\x_{[1,m-l+k+2)}^{-1},\x_{[m+k-l+1,m+1)},\ \dots, \x_{[k+1,l+1)}\}
\end{array}
$$
is an orbit of $\x_{[k,l)}$ under action of $\langle \omega\rangle$. Moreover, $\x_{[i,j)}^{\tau}=\x_{[m-j+1,m-i+1)}$, so if $\x_{[i,j)}$ belongs to this orbit, then $\x_{[i,j)}^{\tau}$ as well. By the definition of the graph $\I_m(x)$ the orbit is contained in the set of verices of $\I_m(x)$.
\end{proof}

%----------------------------------------------------------------------------------
%----------------------------------------------------------------------------------

\begin{thm}\label{stabilizer_of_e_nonabel}
Let $G$ be a non-abelian group. Then 
\begin{enumerate}
\item the stabilizer of $\e\in G^m$ in the automorphism group $\mathbf{Aut}(\G_m(G))$ is equal to
$$\mathbf{Aut}_m(G)\times \mathbf{\Delta}_m\simeq \mathbf{Aut}(G)\times D_{m+1};$$
\item the group of all automorphisms of the graph $\G_m(G)$ is equal
$$\mathbf{Aut}(\G_m(G))=\big(\mathbf{T}_m\rtimes \mathbf{Aut}_m(G)\big)\rtimes \mathbf{\Delta}_m\simeq \big(G^m \rtimes \mathbf{Aut}(G)\big)\rtimes D_{m+1}.$$
\end{enumerate}
	
\end{thm}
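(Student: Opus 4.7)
The plan is to derive part (2) from part (1) via a standard orbit-stabilizer argument, so the substantive work is to show that the stabilizer of $\e$ in $\mathbf{Aut}(\G_m(G))$ equals $\mathbf{Aut}_m(G)\times\mathbf{\Delta}_m$. First I would verify that $\mathbf{Aut}_m(G)\cdot\mathbf{\Delta}_m$ is an internal direct product sitting inside the stabilizer: commutativity of $\mathbf{Aut}_m(G)$ with $\tau$ is immediate, while commutativity with $\omega$ follows because a group homomorphism intertwines the inversions and multiplications appearing in the definition of $\omega$; the intersection is trivial since no non-identity element of $\mathbf{\Delta}_m$ fixes every interval.

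For the reverse inclusion, take $\alpha$ in the stabilizer of $\e$. Since $G$ is non-abelian, $|G|\geqslant 6$, so the exceptional case of Corollary \ref{aut_int_clique} does not arise and $\alpha$ maps interval cliques to interval cliques; this induces an automorphism $\overline\alpha$ of $\B_m$, equivalently a permutation of the endpoint set $\{1,\ldots,m+1\}$ via the identification $\overline{\B}_m\simeq KG_{m+1,2}$ of Proposition \ref{kneser}, whose automorphism group is $S_{m+1}$. A direct computation using the explicit formulas for $\omega$ and $\tau$ in Lemma \ref{dihedral} shows that they induce on endpoints the rotation $i\mapsto i-1\pmod{m+1}$ and the reflection $i\mapsto m+2-i$, so the image of $\mathbf{\Delta}_m$ in $\mathbf{Aut}(\B_m)$ is exactly the dihedral subgroup $D_{m+1}\subseteq S_{m+1}$. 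Once we establish $\overline\alpha\in D_{m+1}$, choosing $\delta\in\mathbf{\Delta}_m$ with $\overline\delta=\overline\alpha$ makes $\delta^{-1}\alpha$ a homogeneous automorphism, which by Theorem \ref{morphism} lies in $\mathbf{Aut}_m(G)$, completing part (1).

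The main obstacle is showing $\overline\alpha\in D_{m+1}$ rather than an arbitrary element of $S_{m+1}$. Here non-abelianness is decisive, and the key invariant is supplied by Lemma \ref{diff}. Part (1) of that lemma says that for non-commuting $x,y\in G$, every ``interior'' interval $\pr{k}{l}$ (with $2\leqslant l-k\leqslant m-1$) contains an element $\x_{[k,l)}$ that arises as the unique common neighbour of $\e$ and an explicit weight-$3$ vertex $\g$. Part (2) says that no element of an ``extremal'' interval --- $\pr{k}{k+1}$ or $\pr{1}{m+1}$ --- can ever be such a unique common neighbour. Because $\alpha$ fixes $\e$ and preserves the graph structure, it preserves this dichotomy; hence $\overline\alpha$ permutes the $m+1$ extremal intervals among themselves. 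Identifying intervals with $2$-subsets of $\{1,\ldots,m+1\}$, the extremal pairs are precisely the edges of the cycle $C_{m+1}$ whose vertices are $\{1,\ldots,m+1\}$ in cyclic order, so $\overline\alpha$ restricts to a graph automorphism of $C_{m+1}$. Since $\mathrm{Aut}(C_{m+1})=D_{m+1}$ for $m\geqslant 2$, this gives $\overline\alpha\in D_{m+1}$ as required.

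Part (2) then follows from $\mathbf{Aut}(\G_m(G))=\mathbf{T}_m\cdot\mathrm{Stab}(\e)$, a consequence of the transitive action of $\mathbf{T}_m$ on $G^m$. Equation (\ref{fTf}) shows that $\mathbf{Aut}_m(G)$ normalizes $\mathbf{T}_m$, yielding the inner semidirect product $\mathbf{T}_m\rtimes\mathbf{Aut}_m(G)$. By Lemma \ref{omega_n_normalize}, conjugation by $\omega$ sends $T_\g$ to $T_{\g^\omega}f_{g_1}$, which still belongs to $\mathbf{T}_m\rtimes\mathbf{Aut}_m(G)$ since $f_{g_1}$ is an inner automorphism of $G$; a similar (easier) computation shows $\tau^{-1}T_\g\tau=T_{\g^\tau}\in\mathbf{T}_m$. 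Hence $\mathbf{\Delta}_m$ normalizes $\mathbf{T}_m\rtimes\mathbf{Aut}_m(G)$, and combining with part (1) gives the claimed outer semidirect product structure.
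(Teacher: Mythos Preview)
Your argument is essentially the paper's, packaged a bit more cleanly for $m\geqslant 4$: rather than showing directly that an automorphism of $\B_m$ fixing the cycle $\mathbf{B}_1=\{\pr{k}{k+1}\}\cup\{\pr{1}{m+1}\}$ pointwise must fix all of $\B_m$ (which the paper does via a common-neighbour count), you invoke $\mathrm{Aut}(KG_{m+1,2})=S_{m+1}$ and then note that a permutation of $\{1,\dots,m+1\}$ preserving the edge set of $C_{m+1}$ must lie in $D_{m+1}$. That is fine, and the use of Lemma~\ref{diff} to separate extremal from interior intervals is exactly what the paper does.

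There is, however, a genuine gap at $m=3$. The identification $\mathrm{Aut}(KG_{m+1,2})\simeq S_{m+1}$ fails there: $KG_{4,2}$ is a perfect matching on six vertices, with automorphism group $C_2\wr S_3$ of order $48$, strictly larger than $S_4$. Concretely, the involution of $\B_3$ swapping $\pr{1}{3}$ and $\pr{2}{4}$ while fixing the four extremal intervals preserves your extremal set but is \emph{not} induced by any permutation of $\{1,2,3,4\}$, so your deduction ``$\overline\alpha$ comes from some $\sigma\in S_{m+1}$, and $\sigma$ preserves the cycle edges, hence $\sigma\in D_{m+1}$'' does not go through. The paper handles this case separately (last paragraph of the proof, mirroring the argument for abelian $G$ in Theorem~\ref{stabilizer_of_e}): one checks that this extra automorphism of $\B_3$ cannot be induced by an automorphism of $\G_3(G)$, since for any $x$ with $x\neq x^{-1}$ the dispersed clique $\{\x_{[1,2)},\x_{[1,3)},\x_{[1,4)}\}$ would be mapped to a non-clique with vertices in $\pr{1}{2},\pr{2}{4},\pr{1}{4}$. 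You need to add this case analysis.
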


%----------------------------------------------------------------------------------

\begin{proof}
\noindent (1) Similarly as in the abelian case, for all  $f\in\mathbf{Aut}_m(G)$, $f\omega=\omega f$ and $f\tau=\tau f$, hence
$$\langle \mathbf{Aut}_m(G), \mathbf{\Delta}_m\rangle =\mathbf{Aut}_m(G)\times \mathbf{\Delta}_m.$$
Also, as for abelian groups, it is enough to show, that if $\alpha\in \mathbf{Aut}(\G_m(G))$ fixes $\e$ and $\overline{\alpha}$ is the automorphism of $\B_m(G)$ induced by $\alpha$, then \par\medskip

($*$)\ \ \ {\it $\overline{\alpha}$ is induced by some $\beta\in\mathbf{\Delta}_m$.} \par

\medskip

If $m=2$, then the graph $\B_2(G)$ is a cycle with three vertices 
$$
\pr{1}{2}, \pr{2}{3}, \pr{1}{3}
$$
and its automorphism group is isomorphic to the symmetric group $S_3$. The group $\mathbf{\Delta}_2$ is isomorphic to $S_3$ and it is easily seen that it acts on the graph  $\B_2(G)$ faithfully.

\medskip

Now let $m>3$ and let
$$
\mathbf{B_1}(G)=\{ \pr{1}{2}, \pr{2}{3},\dots,\pr{m}{m+1},\pr{1}{m+1}\}
$$
and
$$
\mathbf{B}_2(G)=\{\pr{k}{l}\mid\, 1\leqslant k<l\leqslant m+1,\ 2\leqslant l-k < m \}.
$$
We claim that $\mathbf{B}_i(G)^\alpha=\mathbf{B}_i(G)$ for $i=1,2$. To this end, suppose that 
there exists an element $\pr{s}{s+1}\in\mathbf{B}_1(G)$ such that $\pr{s}{s+1}^{\alpha}=\pr{k}{l}$, where $2\leqslant l-k<m$.
By Lemma \ref{diff} one can take an element $\ga$ such that $\e$
and $\ga $ have only one common neighbour of the form $\y_{[k,l)}$, where $y\in G\setminus Z(G)$. Then 
$\y_{[k,l)}^{\alpha^{-1}}\in\pr{s}{s+1}$ the unique common element of $\e^{\alpha^{-1}}=\e$ and $\ga^{\alpha^{-1}}$ This contradicts the second part of Lemma \ref{diff}, and consequently proves the claim. Note that the subgraph of $\mathscr{B}_m(G)$  induced by $\mathbf{B_1}(G)$ is an $m+1$ element cycle, so its automorphism group is the dihedral group $D_{m+1}$. On the other hand it is easy to check that the automorphisms $\tau$ and $\omega$ of $\G_m(G)$ from Lemma \ref{dihedral} act on  the cycle $\mathbf{B_1}(G)$ according to:
$$
\pr{s+1}{s+2}^\omega =\pr{s}{s+1},\  \  \pr{1}{2}^\omega=\pr{1}{m+1}\  \  \pr{1}{m+1}^\omega=\pr{m}{m+1}
$$
and
$$
\pr{s}{s+1}^\tau=\pr{m+1-s}{m+2-s} \  \  \  \pr{1}{m+1}^\tau=\pr{1}{m+1}.
$$
Thus it is clear that the  automorphisms  determined  by $\tau $ and $\omega$ generate the full group of automorphisms of the cycle $\mathbf{B_1}(G)$. This proves that one can  find $\delta\in \mathbf{Aut}(\G_m(G))$ such that
$\alpha\delta^{-1}$ fixes all intervals from $\mathbf{B}_1(G)$.

It remains to show that if an automorphism $\varphi$ of $\G_m(G)$ fixes each interval from $\mathbf{B}_1(G)$, then it fixes also each interval from $\mathbf{B}_2(G)$. If $m\geqslant 4$,  $1<k<l<m+1$, then the intervals $\pr{k}{l}$ from $\mathbf{B}_2(G)$ have exactly four neighbours in $\mathbf{B}_1(G)$. There are: $\pr{k-1}{k}$, $\pr{k}{k+1}$, $\pr{l-1}{l}$, $\pr{l}{l+1}$. Thus if the last four intervals are fixed by $\varphi$, then $\pr{k}{l}$ also must be fixed by $\varphi$ as the unique
common neighbour of these four intervals. By the same reason $\pr{1}{k}$ (resp.$\pr{l}{m+1}$) must be fixed by $\varphi$ as the unique common neighbour of three fixed  intervals: $\pr{1}{2}$,  $\pr{k-1}{k}$, $\pr{k}{k+1}$ (resp. $\pr{l-1}{l}$,  $\pr{l}{l+1}$, $\pr{m}{m+1}$). 
\medskip

This argument does not work when $m=3$ because there exists the unique automorphism $\varphi$ of $\B_3(G)$ which is trivial on $\mathbf{B}_1(G)=\{\pr{1}{2},\ \pr{2}{3},\ \pr{3}{4},\ \pr{1}{4}\}$ and non-trivial on $\mathbf{B}_2(G)$. This is the automorphism as defined above for the abelian case when $m=3$. But similar arguments as there show that this automorphism is not induced by an automorphism of $\G_3(G)$.

\medskip

\noindent (2) It is clear that $\mathbf{Aut}(\G_m(G))$ is a product of the groups $\mathbf{T}_m$  and $\mathbf{Aut}_m(G)\times \mathbf{\Delta}_m $. The subgroup $\mathbf{T}_m$ is normalized by $\mathbf{Aut}_m(G)$, so we have a semidirect product $\mathbf{T}_m\rtimes \mathbf{Aut}_m(G)$ which in turn is normalized by $\mathbf{\Delta}_m$. Therefore $\mathbf{Aut}(\G_m(G))$ is a two step semidirect product
$$\big(\mathbf{T}_m\rtimes \mathbf{Aut}_m(G)\big)\rtimes \mathbf{\Delta}_m.$$
\end{proof}

%----------------------------------------------------------------------------------
%----------------------------------------------------------------------------------
\begin{rem}
	{\em Notice that the subgraph of $\B_m(G)$  with the vertex set $\mathbf{B}_2(G)$ is the complement of the stable Kneser graph $SG_{m+1,2}$, whose group of automorphisms is described in \cite{B}.}
\end{rem}

%----------------------------------------------------------------------------------
%----------------------------------------------------------------------------------
We can now prove the main result of this paper.
\begin{thm}\label{izo}
Let $G$ and $H$ be groups and $m>1$. Then the graphs $\G_m(G)$ and $\G_m(H)$ are isomorphic if and only if the groups $G$ and $H$ are isomorphic.
\end{thm}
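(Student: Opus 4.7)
The backward implication is immediate, as any group isomorphism $f\colon G\to H$ induces a graph isomorphism $\G_m(G)\to\G_m(H)$ by coordinatewise action. For the converse, let $F\colon \G_m(G)\to \G_m(H)$ be a graph isomorphism. Post-composing with the right-transfer $T_{F(\e_G)^{-1}}\in\mathbf{Aut}(\G_m(H))$, we may assume $F(\e_G)=\e_H$; a vertex count then gives $|G|=|H|$, and the observation preceding Lemma \ref{diff} (that non-abelianness is detected graph-theoretically by the existence of two distinct vertices having exactly one common neighbour in $\G_m(G)$) shows that $G$ is abelian if and only if $H$ is. The plan is to construct $\sigma$ in the stabiliser of $\e_H$ in $\mathbf{Aut}(\G_m(H))$ such that $\sigma\circ F$ is homogeneous; Theorem \ref{morphism} will then furnish a group isomorphism $G\simeq H$.

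The first step is to verify that $F$ carries intervals to intervals. The case $(m,|G|)=(2,3)$ is settled separately, since $|H|=3$ forces $H\simeq C_3\simeq G$ outright. In all other cases, Corollary \ref{aut_int_clique}, combined with the descriptions of maximum interval cliques and maximum dispersed cliques in Lemmas \ref{interval_clique} and \ref{dispersed_clique}, guarantees that $F$ maps each set $\pr{k}{l}$ onto some interval of $H^m$, and hence induces a permutation $\pi$ of the index set $\{(k,l):1\leqslant k<l\leqslant m+1\}$. This $\pi$ is necessarily an automorphism of the interval graph $\B_m$ of Proposition \ref{kneser}.

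To complete the argument I must show that $\pi$ is realised by some $\sigma$ in the stabiliser of $\e_H$. Theorems \ref{stabilizer_of_e} and \ref{stabilizer_of_e_nonabel} describe the image of that stabiliser in $\mathbf{Aut}(\B_m)$: it is $\mathbf{\Gamma}_m$ in the abelian case and $\mathbf{\Delta}_m$ in the non-abelian case. Crucially, the restrictions forcing the stabiliser action into these subgroups were derived in those proofs purely from intrinsic graph invariants of $\G_m$ — principally the unique-common-neighbour criterion of Lemma \ref{diff} in the non-abelian situation (which singles out the ``extreme'' intervals $\pr{s}{s+1}$ and $\pr{1}{m+1}$), and the three-vertex-clique obstruction employed in the proof of Theorem \ref{stabilizer_of_e} when $m=3$ in the abelian case. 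These same constraints apply verbatim to the permutation $\pi$ induced by $F$, so $\pi$ lies in the appropriate subgroup. Choosing $\sigma$ with interval-action $\pi^{-1}$, the composition $\sigma\circ F$ fixes each interval setwise, hence is homogeneous, and Theorem \ref{morphism} finishes the proof.

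The main obstacle I anticipate is the exceptional behaviour at $m=3$ with $G$ elementary abelian of exponent $2$, where $\mathbf{Aut}(\B_3)$ strictly contains $\mathbf{\Gamma}_3$: one must verify that $H$ is then also elementary abelian of exponent $2$, so that $\mathbf{Aut}(\G_3(H))$ likewise realises all of $\mathbf{Aut}(\B_3)$ and the required $\sigma$ exists. By Proposition \ref{V(g)=V(h)} the local graph $\V_m(\e,G)$ is determined up to isomorphism by $|G|$ and the number of involutions of $G$, and a closer inspection of the subgraphs $\I_m(x)$ of Lemma \ref{I(x)} — whose regularity degrees distinguish involutions from non-involutions — shows that the number of involutions can in turn be recovered from $\V_m(\e,G)$. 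Since $F$ identifies $\V_m(\e_G,G)$ with $\V_m(\e_H,H)$, the groups $G$ and $H$ share the same number of involutions, and when $G$ is elementary abelian of exponent $2$ so is $H$.
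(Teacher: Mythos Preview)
Your proposal follows essentially the same strategy as the paper: normalise so that $F(\e_G)=\e_H$, show that $F$ carries intervals to intervals via Corollary~\ref{aut_int_clique}, argue that the induced permutation of $\B_m$ is realised by some $\sigma$ in the stabiliser of $\e_H$ (by recycling the graph-theoretic constraints from the proofs of Theorems~\ref{stabilizer_of_e} and~\ref{stabilizer_of_e_nonabel}), and then apply Theorem~\ref{morphism} to $\sigma\circ F$. The paper does precisely this.

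There is, however, one small slip in your treatment of the exceptional case $m=3$ with $G$ elementary abelian of exponent~$2$. You correctly deduce that $H$ must also be elementary abelian of exponent~$2$ (via the involution count read off from the $\I_m(x)$ subgraphs). But you then assert that ``$\mathbf{Aut}(\G_3(H))$ likewise realises all of $\mathbf{Aut}(\B_3)$ and the required $\sigma$ exists''. The paper establishes this only for $H=C_2$ (Remark~(a)); for larger elementary abelian $2$-groups the stabiliser of $\e$ is not computed, and Theorem~\ref{stabilizer_of_e} explicitly excludes this case from its hypotheses. So the existence of $\sigma$ is unproven here. The paper sidesteps this entirely: once $G$ and $H$ are both elementary abelian $2$-groups of the same order, they are isomorphic outright (such groups are determined up to isomorphism by their order), and no further argument is needed. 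You should take this shortcut rather than attempt to push the main argument through.
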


%----------------------------------------------------------------------------------

\begin{proof} It is obvious that isomorphic groups have isomorphic Cayley graphs. Suppose that $F\colon\G_m(G)\to \G_m(H)$ is a graph isomorphism. Observe that if $G$ is an elementary abelian $2$-group, then so is $H$ by Proposition \ref{V(g)=V(h)}. So we assume that neither $G$ nor $H$ is such a group.
In light of Theorem \ref{morphism} it is enough to show that there exists an automorphism $\varphi \in \mathbf{Aut}(\G_m(H))$ such that $\widehat{F}=\varphi\circ F$ is a homogeneous isomorphism. Notice that if $F(\e_G)= \ga\neq \e_H$, then the composition $\mathbf{T}_{\ga^{-1}}\circ F$ is an isomorphism sending the element $\e_G$ onto $\e_H$. Thus we may assume that $F(\e_G)=\e_H$ and then also $F(V(\e_G))=V(\e_H)$. 

Consider the graphs of intervals $\mathscr{B}_m(G)$ and $\mathscr{B}_m(H).$ Since the sets $\{\e_G\}\cup \pr{k}{l}$ and $\{\e_G\}\cup\prh{k}{l}$ form maximum $|G|$-element cliques around $\e_G$ and $\e_H$ (it is clear that $|G|=|H|$), the isomorphism $F$ induces an isomorphism of graphs $\mathscr{B}_m(G)$ and $\mathscr{B}_m(H)$. Indeed, if $m\neq |G|-1$, then  the maximum $|G|$-element cliques  containing $\e_G$ and $\e_H$ are of interval type. Thus $F$ induces a bijection $\widehat{F}$ between vertices of $\mathscr{B}_m(G)$ and $\mathscr{B}_m(H)$. If $m=|G|-1$ our graphs have also dispersed $|G|$-element cliques. It is seen, by   Corollary \ref{aut_int_clique},  that each isomorphism preserves the type of a maximum clique. So in this case,  $F$ induces also a bijection $\widehat{F}$ between vertices of $\mathscr{B}_m(G)$ and $\mathscr{B}_m(H)$. It is obvious that  $\widehat{F}$ is a graph isomorphism.

It follows from Lemma \ref{diff} that $G$ is abelian if and only if $H$ is abelian. Consider the map
$\widehat{\psi}\colon \mathscr{B}_m(H)\to \mathscr{B}_m(H)$ given by $\widehat{\psi}(\prh{k}{l})=\widehat{F}(\pr{k}{l})$. Since $\widehat{F}$ is a graph isomorphism, $\widehat{\psi}$ is a graph automorphism. Furthermore, using the same arguments as in the proofs of Theorem \ref{stabilizer_of_e} and Theorem \ref{stabilizer_of_e_nonabel}, in all special cases, one can show that $\widehat{\psi}$ is induced by an automorphism $\psi$ of $\G_m(H)$ fixing $\e_H$. 
Hence the map $\psi^{-1}\circ F$ is a homogeneous isomorphism of $\G_m(G)$ onto $\G_m(H)$, that is by Theorem \ref{morphism}
$$(\psi^{-1}\circ F)(g_1,g_2,\ldots,g_m)=(f(g_1),f(g_2),\ldots,f(g_m))$$
for some isomorphism $f:G\rightarrow H$. Consequently, the groups $G$ and $H$ are isomorphic.
\end{proof}

\end{document}